\documentclass{amsart}

\usepackage[foot]{amsaddr}

\usepackage{bbm}	
\usepackage{mathrsfs}	

\usepackage{amsmath, amsthm, amssymb}	
\usepackage{enumitem}	

\usepackage{tikz}	
\usetikzlibrary{arrows,shapes,trees,calc}

\usepackage[linktocpage,hidelinks]{hyperref}
\usepackage{mathtools}
\usepackage[extra]{tipa}

\setlist[enumerate]{label=(\roman*)}

\newtheorem{theorem}{Theorem}[section]
\newtheorem{lemma}[theorem]{Lemma}
\newtheorem{corollary}[theorem]{Corollary}

\theoremstyle{definition}
\newtheorem{definition}[theorem]{Definition}
\newtheorem{example}[theorem]{Example}

\theoremstyle{remark}
\newtheorem{remark}[theorem]{Remark}

\numberwithin{equation}{section}





\newcommand{\R}{\mathbb{R}}	
\newcommand{\N}{\mathbb{N}}	
\newcommand{\Q}{\mathbb{Q}}	


\newcommand{\sB}{\mathscr{B}}	
\newcommand{\sE}{\mathscr{E}}	


\newcommand{\cC}{\mathcal{C}}	

\newcommand{\BB}{B}		

\renewcommand{\O}{\Omega}	
\renewcommand{\o}{\omega}	
\newcommand{\PV}{\mathbb{P}}	
\newcommand{\EV}{\mathbb{E}}	

\newcommand{\sF}{\mathscr{F}}	
\newcommand{\sG}{\mathscr{G}}	
\newcommand{\T}{\Theta}		
\newcommand{\D}{\Delta}		
\newcommand{\z}{\zeta}		

\renewcommand{\a}{\alpha}	

\renewcommand{\t}{\tau}		
\renewcommand{\r}{\rho}		
\newcommand{\h}{H}		



\newcommand{\sS}{\mathscr{S}}	
\newcommand{\sD}{\mathscr{D}}	


\newcommand{\cG}{\mathcal{G}}	
\newcommand{\cV}{\mathcal{V}}	
\newcommand{\cI}{\mathcal{I}}	
\newcommand{\cE}{\mathcal{E}}	
\newcommand{\cL}{\mathcal{L}}	
\newcommand{\cLV}{\partial}	
\newcommand{\cR}{\rho}		

\newcommand{\dint}{d^{\operatorname{int}}} 
\newcommand{\dPV}{d^{\cP,\cR}}

\newcommand{\tcG}{\widetilde{\cG}}
\newcommand{\tcV}{\widetilde{\cV}}
\newcommand{\tcI}{\widetilde{\cI}}
\newcommand{\tcE}{\widetilde{\cE}}

\newcommand{\tcLV}{\widetilde{\cLV}}
\newcommand{\tcR}{\widetilde{\cR}}

\newcommand{\cP}{\mathcal{P}}   

\newcommand{\bs}{\backslash}	
\newcommand{\comp}{\complement}	

\renewcommand{\b}{\beta}

\newcommand{\e}{\varepsilon}	
\renewcommand{\d}{\delta}	        



\newcommand{\tT}{\widetilde{T}}

\newcommand{\tX}{\widetilde{X}{}}










\newcommand{\1}{\mathbbm{1}}		

\newcommand{\abs}[1]{\left\lvert #1 \right\rvert}  
\newcommand{\bigabs}[1]{\big\lvert #1 \big\rvert}  

\newcommand{\norm}[1]{\left\lVert #1 \right\rVert} 

\newcommand\restr[2]{{		  	
  \left.\kern-\nulldelimiterspace 
  #1 				  
  \vphantom{\big|}		  
  \right|_{#2}    		  
  }}

\newcommand{\overbar}[1]{\mkern 1.5mu\overline{\mkern-1.5mu#1\mkern-1.5mu}\mkern 1.5mu}


\newcommand{\processX}{$X = \big( \O, \sG, (\sG_t)_{t \geq 0}, (X_t)_{t \geq 0}, (\T_t)_{t \geq 0}, (\PV_x)_{x \in E} \big)$}

\newcommand{\KPS}{Kostrykin, Potthoff and Schrader}
\newcommand{\IM}{It\^{o}\textendash{}McKean}

\newcommand{\FW}{Feller\textendash{}Wentzell}

\newcommand{\bigslant}[2]{{\raisebox{.2em}{$#1$}\big/\raisebox{-.2em}{$#2$}}}




\newcommand{\textdef}{\textit}

\begin{document}

\title[Brownian Motions on Metric Graphs I: Characterization]{Brownian Motions on Metric Graphs with Non-Local Boundary Conditions I: Characterization}

\author[Florian Werner]{Florian Werner}
\address{Institut f\"ur Mathematik, Universit\"at Mannheim, 68131 Mannheim, Germany}
\email{fwerner@math.uni-mannheim.de}


\subjclass[2000]{60J65, 60J45, 60H99, 58J65, 35K05, 05C99}

\date{\today}


\keywords{Brownian motion, non-local \FW\ boundary condition, metric graph, Markov process, Feller process}

\newcommand{\sqed}{\relax}

\hyphenation{Pott-hoff}

\begin{abstract}
  A classification for Brownian motions on metric graphs, that is,
right continuous strong Markov processes which behave like a one-dimensional Brownian motion on the edges and feature effects like 
Walsh skewness, stickiness and jumps at the vertices, is obtained.
The Feller property of these processes is proved, and the boundary conditions of their generators are identified as non-local \FW\ boundary conditions.
By using a technique of successive revivals, a complete description of the generator is achieved for Brownian motions on star graphs.
\end{abstract}

\maketitle

\section{Introduction and Main Results}
This article is the second part in a series of works in which we achieve a classification and pathwise construction of Brownian motions on metric graphs.
The interested reader may find a short survey covering the developments and applications of this class of stochastic processes 
at the beginning of the first part~\cite{WernerStar}.
In that article, we obtained a complete pathwise construction of every possible Brownian motion on special graphs having only one vertex and a finite set of edges without loops, so called star graphs.
In the present article and its continuation, we extend our findings to general metric graphs with finite sets of vertices and edges
(a concise overview of metric graphs can be found in~Appendix~\ref{app:metrix graphs}).

Following \IM~\cite{ItoMcKean63}, we understand a Brownian motion on a metric graph~$\cG$ as a right continuous, strong Markov process on $\cG$ 
which behaves on every edge like the standard one-dimensional Brownian motion (see section~\ref{sec:G_BM:def} for a rigorous definition).
Thus, we will extend the results of \KPS~\cite{KPS12} to Brownian motions admitting discontinuities at the vertex points.
This will generalize the classical local boundary condition at any vertex, as given in their work, to non-local \FW\ boundary conditions,
as given in equation~\eqref{eq:G_BM:generator boundary conditions}.

The admittance of non-local effects introduces several difficulties: 
As the process may jump from any vertex to any point of the graph, an analysis of the complete form of the semigroup or resolvent becomes unfeasible.
We will solve this by utilizing localization techniques like Dynkin's formulas (cf.~Appendix~\ref{app:Feller processes}) and local constructions (in part II),
which however, due to the possibility of jumps to distant points, are only practicable to certain extends.
Furthermore, we will only achieve an incomplete description of the generator in the case of a general metric graph.
 Thus, we need to carefully trace characteristic components of the processes (cf.~Theorem~\ref{theo:G_BM:Feller data})
 and examine the effects of path transformations on these components (see., e.g., Lemma~\ref{lem:G_IM:Feller data of revived BB on star graph}, and part II).

In the present article, we will give a characterization of Brownian motions on metric graphs by identifying the boundary conditions of their generators. 
Our main result (as shown in sections~\ref{sec:G_BM:def} and~\ref{sec:G_BM:Fellers theorem}), which we will call ``Feller's theorem'', is as follows:

\begin{theorem} \label{theo:BM characterization}
 Let $X$ be a Brownian motion on $\cG$.
 Then $X$ is a Feller process, uniquely determined by its $\cC_0$-generator $A = \frac{1}{2} \D$, with $\sD(A) \subseteq \cC^2_0(\cG)$.
 For every vertex $v \in \cV$ there exist constants
 $p^v_1 \geq 0$, $p^{v,l}_2 \geq 0$ for each $l \in \cL(v)$, $p^v_3 \geq 0$ and a measure $p^v_4$ on $\cG \bs \{v\}$ satisfying
  \begin{align*} 
    p^v_1 + \sum_{l \in \cL(v)} p^{v,l}_2 + p^v_3 + \int \big( 1 - e^{-d(v,g)} \big) \, p^v_4 (dg) = 1,
  \end{align*}
 and
  \begin{align} \label{eq:G_BM:infinite jumpmeasure}
    p^v_4 \big( \cG \bs \{v\} \big) = +\infty, \quad \text{if} \quad \sum_{l \in \cL(v)} p^{v,l}_2 + p^v_3 = 0,
  \end{align}
 such that the domain of $A$ reads
  \begin{align}
   \sD(A) & \subseteq \notag
       \Big\{ f \in \cC^2_0(\cG) : \forall v \in \cV: \\ 
   & \qquad           p^v_1 f(v) - \sum_{l \in \cL(v)} p^{v,l}_2 f_l'(v) + \frac{p^v_3}{2} f''(v) - \int \big( f(g) - f(v) \big) \, p^v_4(dg) = 0 \Big\}.  \label{eq:G_BM:generator boundary conditions}
  \end{align}
\end{theorem}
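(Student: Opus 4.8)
The plan is to prove the statement in four stages—the Feller property, the identification $A=\tfrac12\D$ with $\sD(A)\subseteq\cC^2_0(\cG)$, the derivation of the vertex boundary conditions, and uniqueness—with the non-local jump term being the genuine difficulty throughout. For the Feller property I would exploit that on the interior of each edge $X$ is a standard one-dimensional Brownian motion, whose transition semigroup already preserves $\cC_0$ and is strongly continuous; the only issue is the gluing at the vertices. Using the strong Markov property together with the characteristic components recorded in Theorem~\ref{theo:G_BM:Feller data}, one checks that $T_t\cC_0(\cG)\subseteq\cC_0(\cG)$ and $\norm{T_tf-f}\to0$ as $t\downarrow0$. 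Once $X$ is Feller, for any $f\in\sD(A)$ we have $Af=g\in\cC_0(\cG)$, and on each open edge the generator acts as $\tfrac12\tfrac{d^2}{dx^2}$, so $\tfrac12 f''=g$ there; hence $f''=2g$ is continuous up to the vertices, giving $f\in\cC^2_0(\cG)$, $A=\tfrac12\D$, and in particular $Af(v)=\tfrac12 f''(v)$ at every vertex.

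The heart of the argument is the derivation of the boundary conditions, which I would carry out vertex by vertex using Dynkin's formula as a localization device. Fixing $v\in\cV$ and writing $\t_r$ for the exit time from the metric ball of radius $r$ about $v$, we have for $f\in\sD(A)$
\begin{align*}
  \EV_v\big[f(X_{\t_r})\big]-f(v)=\EV_v\Big[\int_0^{\t_r}Af(X_s)\,ds\Big],
\end{align*}
so dividing by $\EV_v[\t_r]$ and letting $r\downarrow0$ identifies $Af(v)$ with Dynkin's characteristic operator at $v$. The exit behaviour splits into the continuous passage along each incident edge $l\in\cL(v)$ (producing the one-sided derivatives $f_l'(v)$ with weights $p^{v,l}_2$), the killing and the sojourn at $v$ (producing $f(v)$ and $f''(v)$ with weights $p^v_1$ and $p^v_3$), and the jumps leaving the ball (producing the non-local term $\int(f(g)-f(v))\,p^v_4(dg)$). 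Collecting these contributions and inserting $Af(v)=\tfrac12 f''(v)$ yields the homogeneous identity~\eqref{eq:G_BM:generator boundary conditions}.

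The nonnegativity of $p^v_1,p^{v,l}_2,p^v_3$ and the fact that $p^v_4$ is a positive measure follow from the positive maximum principle for the Feller generator $A$—testing with functions that attain a nonnegative maximum at $v$, or that vanish in a neighbourhood of $v$, forces the correct signs, exactly as in the Courr\`{e}ge--von Waldenfels description of Feller generators. Since~\eqref{eq:G_BM:generator boundary conditions} is homogeneous in the family $(p^v_1,p^{v,l}_2,p^v_3,p^v_4)$, these data are determined only up to a common positive factor, which the stated normalization fixes; the integrability $\int(1-e^{-d(v,g)})\,p^v_4(dg)<\infty$ is in turn forced by the requirement that $\int(f(g)-f(v))\,p^v_4(dg)$ converge for every $f\in\cC^2_0(\cG)$, since $\abs{f(g)-f(v)}$ is comparable to $1\wedge d(v,g)$ and hence to $1-e^{-d(v,g)}$. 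The dichotomy~\eqref{eq:G_BM:infinite jumpmeasure} is obtained by a path argument: if the process can neither diffuse into an edge ($\sum_{l\in\cL(v)}p^{v,l}_2=0$) nor linger by stickiness ($p^v_3=0$), it must leave $v$ by jumps alone, and a finite jump intensity is incompatible with this departure structure, forcing $p^v_4(\cG\bs\{v\})=+\infty$.

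Finally, uniqueness is immediate from the Hille--Yosida theorem: a $\cC_0$-generator determines its Feller semigroup, hence all finite-dimensional distributions, hence the law of $X$. The main obstacle is precisely the non-local measure $p^v_4$: because the process may jump from $v$ to arbitrarily distant points, the boundary functional is genuinely non-local and the resolvent cannot be analysed in a neighbourhood of $v$ alone—the difficulty flagged in the introduction. The remedy is to perform the shrinking-neighbourhood limit in Dynkin's formula while holding the far-field jump contribution fixed, and to encode the global part through the weight $1-e^{-d(v,\cdot)}$, which simultaneously controls the local integrability near $v$ and the total jump rate to infinity. Justifying the Riesz representation for this non-local part uniformly in $r$, and separating it cleanly from the local Wentzell terms, is the delicate step on which the whole characterization rests.
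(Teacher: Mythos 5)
Your outline reproduces the paper's broad strategy (Feller property, $A=\tfrac12\D$ on $\cC^2_0(\cG)$, Dynkin localization at each vertex, normalization, uniqueness via the $\cC_0$-generator), but three steps are missing or would fail as described. First, your route to the Feller property is circular: you invoke the ``characteristic components recorded in Theorem~\ref{theo:G_BM:Feller data}'', yet that theorem is proved \emph{for} a Feller process --- its derivation rests on Lemma~\ref{lem:A_FP:trap exit expectation} and on Dynkin's generator formula, both of which presuppose Fellerity. The paper obtains the Feller property without any reference to Theorem~\ref{theo:G_BM:Feller data}, by computing the resolvent in closed form: Dynkin's resolvent decomposition at the edge-exit time $H_X$, combined with the equivalent resolvent/exit-distribution characterization of the local Brownian behavior (Theorem~\ref{theo:G_BM:resolvent characterization of BM on MG}, which in turn needs the non-trivial fact that the stopped process stays on its starting edge, Theorem~\ref{theo:G_BM:stopped BM is on starting edge}), expresses $U_\a f$ on each edge through Dirichlet resolvents plus the boundary values of $U_\a f$ at the incident vertices (Theorem~\ref{theo:G_BM:resolvent of BM}). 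Continuity and $\cC^2_0$-regularity follow (Corollary~\ref{cor:G_BM:resolvent preserves C}), and the Feller property is verified through the resolvent criterion rather than the semigroup --- precisely because jumps from vertices make a direct check of $T_t\cC_0(\cG)\subseteq\cC_0(\cG)$ intractable, your ``one checks that'' has no supporting mechanism. Incidentally, differentiating the explicit resolvent formula twice yields $A=\tfrac12\D$ at once, so your separate characteristic-operator argument on edge interiors, while salvageable via the classical generalized-second-derivative lemma, is not what carries the identification here.

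Second, the central analytic step --- extracting the boundary weights --- is exactly the step your proposal defers. The split of the exit behavior into ``edge passage / killing--sojourn / jumps'' cannot be read off at fixed radius: at scale $\e$ the exit law carries mass near the sphere of radius $\e$ which, in the limit, feeds \emph{both} the derivative terms and the jump measure. The paper resolves this by tilting the scaled exit measures $\nu^v_\e$ by $1-e^{-d(v,\cdot)}$, normalizing by $K^v_\e$, extending to the compactification $\overbar{\cG\bs\{v\}}$, and extracting a weak limit along a subsequence using separability of $\cC(\overbar{\cG\bs\{v\}})$ (Theorem~\ref{theo:G_MG:cont functions on compactification separable}) and a Helly diagonal argument; the terms $f_l'(v)$ and part of $p^v_1$ then appear as point masses of the limit measure at the adjoined points $(l,0+)$ and $(e,+\infty)$. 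Your closing paragraph names this as ``the delicate step on which the whole characterization rests'' without executing it, so as written the proof of \eqref{eq:G_BM:generator boundary conditions} is incomplete; note also that nonnegativity of the weights is then automatic (they are limits of nonnegative quantities), so no Courr\`ege/positive-maximum-principle detour is needed. Third, the dichotomy \eqref{eq:G_BM:infinite jumpmeasure} needs more than the heuristic that ``finite jump intensity is incompatible with the departure structure'': since the constants are defined analytically, the paper argues from the boundary condition itself, inserting $u=\a U_\a f\in\sD(A)$, letting $\a\rightarrow\infty$ so that $\a U_\a f\rightarrow f$ pointwise, and concluding that $p^{v,l}_2=p^v_3=0$ with finite $p^v_4$ would force $p^v_4$ to be a scaled Dirac mass at $v$ --- impossible for a measure on $\cG\bs\{v\}$. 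Your path-level version would require the pathwise construction of part~II to link the constants to jump behavior, which is not available at this stage.
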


In part~II, we will give a pathwise construction for every possible set of boundary conditions on a given metric graph:
Having already achieved the pathwise construction of Brownian motions on metric graphs with only one vertex in~\cite{WernerStar},
we will use these local solutions and piece them together to a global solution on the complete metric graph.
The construction will involve several non-trivial process transformations, such as killing, revival and state space mappings.
In order to be able to verify the correctness of the resulting boundary conditions, we need to have access to the boundary values of the (partial) processes
via their local behavior. We will mainly work with the following representation, which we obtain in section~\ref{sec:G_BM:Fellers theorem}:

\begin{theorem} \label{theo:G_BM:Feller data}
 Let $X$ be a Brownian motion on $\cG$ with generator $A$.
 Then, for every $v \in \cV$, there exist constants
 $c^v_1 \geq 0$, $c^{v,l}_2 \geq 0$ for each $l \in \cL(v)$, $c^v_3 \geq 0$ and a measure $c^v_4$ on $\cG \bs \{v\}$, satisfying
  \begin{align*}
    c^v_1 + \sum_{l \in \cL(v)} c^{v,l}_2 + c^v_3 + \int_{\cG \bs \{v\}} \big( 1 - e^{-d(v,g)} \big) \, c^v_4 (dg) = 1,
  \end{align*}
 such that for every $f \in \sD(A)$, the following relation holds:
  \begin{align*}
    c^v_1 \, f(v) - \sum_{l \in \cL(v)} c^{v,l}_2 \, f_l'(v) + c^v_3 \, A f(v) - \int_{\cG \bs \{v\}} \big( f(g) - f(v) \big) \, c^v_4(dg) = 0.
  \end{align*}
 The constants and the measure only depend on the process' exit behavior from any arbitrarily small neighborhood of $v$.
 They are given by
  \begin{align*}
   c^v_1 & = c^{v, \D}_1 + c^{v, \infty}_1, \\
   \text{with} & \quad
   c^{v, \D}_1 = \lim_{n \rightarrow \infty} \frac{\PV_v ( X_{\t_{\e_n}} = \D )}{\EV_v(\t_{\e_n}) \, K^v_{\e_n}}, \quad
   c^{v, \infty}_1 = \sum_{e \in \cE} \overbar{\mu}^v \big( \{ (e, +\infty) \} \big), \\
   c^{v,l}_2 & = 
    \begin{cases}
     \overbar{\mu}^v \big( \{ (l, 0+) \} \big), & l \in \cE(v), \\
     \overbar{\mu}^v \big( \{ (l, 0+) \} \big), & l \in \cI(v), v = \cLV_-(l), \\
     \overbar{\mu}^v \big( \{ (l, \r_l-) \} \big), & l \in \cI(v), v = \cLV_+(l),
    \end{cases} \\
   c^v_3 & = \lim_{n \rightarrow \infty} \frac{1}{K^v_{\e_n}}, \\
   c^v_4 (dg) & = \frac{1}{1 - e^{-d(v,g)}} \, \overbar{\mu}^v(dg),
  \end{align*}
 where for every $\e > 0$, $\t_\e = \inf \big\{ t \geq 0: X_t \in \comp \overline{\BB_\e(v)} \big\}$,
  \begin{align*}
    K^v_{\e} & = 1 + \frac{\PV_v ( X_{\t_{\e}} = \D )}{\EV_v(\t_\e)} + \int_{\cG \bs \{v\}} \big( 1 - e^{-d(v,g)} \big) \, \nu^v_{\e} (dg),
  \end{align*}
  $\nu^v_\e$ and $\mu^v_\e$ are measures on $\cG \bs \{v\}$ defined by 
  \begin{align*}
    \nu^v_\e (dg) & = \frac{\PV_v ( X_{\t_\e} \in dg )}{\EV_v(\t_\e)}, \\
    \mu^v_\e (dg) & = \big( 1 - e^{-d(v,g)} \big) \, \frac{\nu^v_\e (dg)}{K^v_\e},
  \end{align*}
  as well as $\overbar{\mu}^v_\e$, $\overbar{\mu}^v$ are measures on $\overbar{\cG \bs \{v\}}$ with
  \begin{align*}
    \overbar{\mu}^v_\e (dg) & = \mu^v_\e \big( dg \cap \big(\cG \bs \{v\}\big) \big), \\
    \overbar{\mu}^v & = \lim_{n \rightarrow \infty} \overbar{\mu}^v_{\e_n} \quad \text{(as weak limit)},
  \end{align*} 
 and $(\e_n, n \in \N)$ is a sequence of positive numbers converging to zero such that all of the above limits exist.
\end{theorem}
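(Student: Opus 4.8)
The plan is to derive the asserted boundary relation as the limit $\e \to 0$ of \emph{Dynkin's formula} applied to $f \in \sD(A)$ and the first exit time $\t_\e$ from $\overline{\BB_\e(v)}$, after renormalising by $K^v_\e$ so that every quantity stays bounded. Throughout I fix $v \in \cV$ and $f \in \sD(A) \subseteq \cC^2_0(\cG)$, and use that $X$ is Feller (Theorem~\ref{theo:BM characterization}), so $Af \in \cC_0(\cG)$ and Dynkin's formula (Appendix~\ref{app:Feller processes}) applies. For each fixed $\e > 0$ the objects $\EV_v(\t_\e)$, the exit law $\PV_v(X_{\t_\e} \in \cdot)$ on $\cG \bs \{v\}$ and the killing mass $\PV_v(X_{\t_\e} = \D)$ are well defined and finite; these only involve the trajectory inside $\overline{\BB_\e(v)}$, which also explains the claimed locality. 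Writing Dynkin's formula with $f(\D) = 0$,
\[
  \int_{\cG \bs \{v\}} f(g)\,\PV_v(X_{\t_\e} \in dg) - f(v) = \EV_v\Big( \int_0^{\t_\e} Af(X_s)\,ds \Big),
\]
I divide by $\EV_v(\t_\e)$, insert $\pm f(v)$ into the integrand, and use $\nu^v_\e(\cG \bs \{v\}) - 1/\EV_v(\t_\e) = -\PV_v(X_{\t_\e} = \D)/\EV_v(\t_\e)$ to reach the pre-limit identity
\[
  \frac{\PV_v(X_{\t_\e} = \D)}{\EV_v(\t_\e)}\,f(v) - \int_{\cG \bs \{v\}} \big( f(g) - f(v) \big)\,\nu^v_\e(dg) + \frac{1}{\EV_v(\t_\e)}\,\EV_v\Big( \int_0^{\t_\e} Af(X_s)\,ds \Big) = 0.
\]
Dividing by $K^v_\e$ and using $\nu^v_\e / K^v_\e = \mu^v_\e / (1 - e^{-d(v,\cdot)})$ turns the middle term into $\int F_f\,d\overbar{\mu}^v_\e$, where $F_f(g) := \big( f(g) - f(v) \big)/\big( 1 - e^{-d(v,g)} \big)$.

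The structural heart of the argument is to extend $F_f$ continuously to the compactification $\overbar{\cG \bs \{v\}}$. Using the first-order Taylor expansion of $f$ along each edge at $v$ (this is where $f \in \cC^2_0$ enters) together with $1 - e^{-d} \sim d$ as $d \to 0$, the quotient $F_f$ extends to each boundary point $(l, 0+)$, $(l, \r_l-)$ adjacent to $v$ with value the outward derivative $f_l'(v)$; since $f$ vanishes at infinity while $1 - e^{-d} \to 1$, it extends to every point $(e, +\infty)$ with value $-f(v)$. Hence $\overbar{F_f} \in \cC(\overbar{\cG \bs \{v\}})$.

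Because $\mu^v_\e(\cG \bs \{v\}) = \int (1 - e^{-d(v,\cdot)})\,\nu^v_\e / K^v_\e \leq 1$, the family $(\overbar{\mu}^v_\e)$ consists of sub-probability measures on the \emph{compact} space $\overbar{\cG \bs \{v\}}$ and is therefore relatively weakly compact; the bounded scalars $\PV_v(X_{\t_\e} = \D)/(\EV_v(\t_\e) K^v_\e)$ and $1/K^v_\e$ likewise converge along a suitable sequence $\e_n \downarrow 0$, which is the sequence in the statement. Passing to this limit, weak convergence applied to $\overbar{F_f}$ together with $\EV_v(\int_0^{\t_\e} Af(X_s)\,ds)/\EV_v(\t_\e) \to Af(v)$ (valid since $X_s \in \overline{\BB_\e(v)}$ for $s < \t_\e$ and $Af$ is continuous) yields $c^{v,\D}_1 f(v) - \int_{\overbar{\cG \bs \{v\}}} \overbar{F_f}\,d\overbar{\mu}^v + c^v_3\,Af(v) = 0$. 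Splitting $\overbar{\mu}^v$ over the interior $\cG \bs \{v\}$, the boundary points near $v$, and the points at infinity, and inserting the boundary values of $\overbar{F_f}$ computed above, the interior contributes $\int (f(g) - f(v))\,c^v_4(dg)$ with $c^v_4 = \overbar{\mu}^v/(1 - e^{-d(v,\cdot)})$, the near-$v$ part contributes $-\sum_l c^{v,l}_2 f_l'(v)$, and the infinity part contributes $+f(v)\,c^{v,\infty}_1$; setting $c^v_1 = c^{v,\D}_1 + c^{v,\infty}_1$ gives exactly the claimed relation. The normalisation follows by dividing $K^v_\e = 1 + \PV_v(X_{\t_\e} = \D)/\EV_v(\t_\e) + \int (1 - e^{-d(v,\cdot)})\,\nu^v_\e$ by $K^v_\e$, reading off $1 = 1/K^v_\e + \PV_v(X_{\t_\e} = \D)/(\EV_v(\t_\e) K^v_\e) + \overbar{\mu}^v_\e\big(\overbar{\cG \bs \{v\}}\big)$, and letting $\e_n \to 0$; since the ambient space is compact the total mass passes to the limit and decomposes precisely into $c^v_3 + c^v_1 + \sum_l c^{v,l}_2 + \int (1 - e^{-d(v,\cdot)})\,c^v_4$.

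I expect the main obstacle to be the continuous extension of $F_f$ to the compactification with the correct boundary values and, hand in hand, the bookkeeping of which piece of the weak limit $\overbar{\mu}^v$ encodes which coefficient: mass sliding toward $v$ along an edge must be captured as a point mass at $(l, 0+)$ or $(l, \r_l-)$ and reinterpreted, via the vanishing weight $1 - e^{-d}$ and Taylor's theorem, as the first-order term $f_l'(v)$, whereas mass escaping to infinity must be separated into the $f(v)$-coefficient. Ensuring that no mass is lost in the weak limit — which is exactly what makes the normalisation an equality rather than an inequality — depends on carrying out the entire limiting argument on the compact space $\overbar{\cG \bs \{v\}}$ rather than on $\cG$ itself.
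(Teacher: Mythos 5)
Your proposal follows, in all essentials, the paper's own proof: Dynkin's formula at the exit times $\t_\e$, renormalisation by $K^v_\e$, continuous extension of $F_f$ to the compactification $\overbar{\cG \bs \{v\}}$ with boundary values $f_l'(v)$ at $(l,0+)$, $(l,\r_l-)$ and $-f(v)$ at $(e,+\infty)$, extraction of a weakly convergent subsequence of the sub-probability measures $\overbar{\mu}^v_\e$ together with the bounded scalars, identification of the atoms of $\overbar{\mu}^v$ with the coefficients, and the normalisation obtained by testing against $\1$ on the compact space. The only cosmetic difference is that you keep the exact pre-limit identity coming from the time-integral form of Dynkin's formula and let $\EV_v\big(\int_0^{\t_\e} Af(X_s)\,ds\big)/\EV_v(\t_\e) \rightarrow Af(v)$ by continuity of $Af$, whereas the paper quotes the quotient form~\eqref{eq:Dynkins formula (generator)} directly; these are equivalent in substance.

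There is, however, one genuine gap: your blanket assertion that ``for each fixed $\e > 0$ the objects $\EV_v(\t_\e)$, the exit law and the killing mass are well defined and finite'' is false in general. If $v$ is a trap (for instance, an absorbing vertex), then $\t_\e = +\infty$ $\PV_v$-a.s.\ and $\EV_v(\t_\e) = +\infty$ for every $\e > 0$, so Dynkin's formula at $\t_\e$ is unavailable and your pre-limit identity never gets off the ground --- yet the theorem must still hold in this case. The paper treats it separately: for a trap one computes $Af(v) = 0$ directly from the definition of the generator, chooses $c^v_3 = 1$ and $c^v_1 = c^{v,l}_2 = c^v_4 = 0$, and checks that this choice is consistent with the stated formulas (there $\nu^v_\e = \mu^v_\e = 0$, $K^v_\e = 1$ and $\overbar{\mu}^v = 0$). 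Moreover, even when $v$ is not a trap, finiteness of $\EV_v(\t_\e)$ is not automatic: it holds only for all sufficiently small $\e$ and requires an argument --- in the paper, Lemma~\ref{lem:A_FP:trap exit expectation}, which itself rests on the Feller property established beforehand. With the trap case split off and that lemma invoked, your argument is complete and coincides with the paper's proof.
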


Theorem~\ref{theo:G_BM:Feller data} gives explicit (albeit rather unwieldy) expressions for the boundary conditions of a Brownian motion.
As we will utilize them quite frequently, we assign the following, supposably appropriate name:

\begin{definition} \label{def:G_BM:Feller data}
 For any Brownian motion $X$ on a metric graph $\cG$, the collection
  \begin{align*}
    \big( c^{v,\D}_1, c^{v, \infty}_1, (c^{v,l}_2)_{l \in \cL(v)}, c^v_3, c^v_4 \big)_{v \in \cV}
  \end{align*}
 as defined in Theorem~\ref{theo:G_BM:Feller data} is called \textdef{\FW\ data} of $X$.  
 If no distinction is necessary, $c^{v,\D}_1$ and $c^{v, \infty}_1$ are combined, denoted by $c^{v}_1 = c^{v,\D}_1 + c^{v, \infty}_1$.
\end{definition}

The effects of the parameters $( c^{v,\D}_1, c^{v, \infty}_1, (c^{v,l}_2)_{l \in \cL(v)}, c^v_3, c^v_4 \big)_{v \in \cV}$
on the paths of the Brownian motions are well-understood and have 
already been explained in the introduction of~\cite{WernerStar}. Namely, they govern the ratio (and direction) of
killing, reflection, stickiness and jumps at each vertex $v \in \cV$.

It will turn out that the killing ratio $c_1^{v, \infty}$ (which is caused by infinitely large jumps from $v$ in an arbitrarily small time interval)  
is rather ``artificial'' in the context of Brownian motions, 
and will become a source of problems in our upcoming constructions of part~II.
Therefore, we will show that $c_1^{v, \infty}$ indeed vanishes for Brownian motions on star graphs (and thus, in all of our constructions of part~II):

\begin{theorem} \label{theo:G_IM:Feller data c_infty vanishes}
 Let $X$ be a Brownian motion on a star graph $\cG$ with star vertex $v$. Then $c^{v,\infty}_1 = 0$ holds true
 in Theorem~\ref{theo:G_BM:Feller data}.
\end{theorem}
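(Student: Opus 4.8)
The plan is to show that, in the weak limit defining $\overbar{\mu}^v$, no mass escapes to the points at infinity $(e,+\infty)$. Since the set of these points is closed in the compactification $\overbar{\cG \bs \{v\}}$ and each $\overbar{\mu}^v_{\e_n}$ charges only $\cG \bs \{v\}$, the Portmanteau theorem applied to the open neighbourhoods $\{g : d(v,g) > R\} \cup \{(e,+\infty) : e \in \cE\}$ gives
\[
  c^{v,\infty}_1 \;=\; \sum_{e \in \cE} \overbar{\mu}^v\big(\{(e,+\infty)\}\big) \;\leq\; \lim_{R \to \infty}\, \liminf_{n \to \infty}\, \overbar{\mu}^v_{\e_n}\big(\{g : d(v,g) > R\}\big).
\]
It therefore suffices to prove the uniform tail bound $\lim_{R \to \infty} \limsup_{\e \to 0} \mu^v_\e\big(\{d(v,\cdot) > R\}\big) = 0$. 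Recalling $\mu^v_\e(dg) = (1 - e^{-d(v,g)})\,\nu^v_\e(dg)/K^v_\e$ and bounding $1 - e^{-d} \leq 1$, this reduces to controlling $\PV_v\big(d(v,X_{\t_\e}) > R\big)\big/\big(K^v_\e\,\EV_v(\t_\e)\big)$ uniformly in small $\e$.

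The first step is to localise the far exit event. On a star graph the only vertex inside any ball $\BB_R(v)$ is $v$ itself, and away from $v$ the process is a standard one-dimensional Brownian motion and hence continuous; continuous motion leaves $\BB_\e(v)$ exactly on the sphere $\{d(v,\cdot) = \e\}$. Consequently, for $\e < R$ the event $\{d(v,X_{\t_\e}) > R\}$ can only be realised by a jump emanating from $v$ and landing in $\{d > R\}$, so the quantity of interest is the rate of such long jumps. Writing $\lambda_R$ for this long-jump intensity at $v$ (the large-distance tail of the jump mechanism $\Pi^v$ underlying Theorem~\ref{theo:G_BM:Feller data}) and comparing the competing ``exit clocks'' at $v$ by It\^o's excursion theory, one obtains $\PV_v(d(v,X_{\t_\e}) > R) = \lambda_R/m_\e$, where $m_\e$ is the rate at which excursions from $v$ reach distance $\e$. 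The remaining task is to check that the factor $K^v_\e\,\EV_v(\t_\e)$ absorbs $1/m_\e$; using the explicit law of one-dimensional Brownian excursions on the rays (in particular $m_\e \asymp \e^{-1}$ when a reflecting part is present, together with $\PV_x(\text{hit }\e\text{ before }v) = x/\e$ on an edge), the scale factors cancel and leave the uniform estimate $\mu^v_\e(\{d > R\}) \lesssim \lambda_R$.

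It remains to show $\lambda_R \to 0$ as $R \to \infty$, and here the star structure is decisive in a second, more conceptual way. Every long jump from $v$ lands at distance $> R > 1$ on some ray, and since jumps can originate only from $v$, before the next long jump the process must travel continuously back to $v$ along that ray. The duration of such a return is bounded below in distribution (a one-dimensional Brownian motion started at distance $\geq 1$ fails to reach $0$ within a fixed time $\delta$ with probability $\geq p > 0$, and these events are essentially independent across returns by the strong Markov property at the jump times). Hence infinitely many long jumps would force infinitely many returns whose durations sum to infinity, contradicting the a.s.\ finiteness of the inverse local time at $v$. This shows $\lambda_R = \Pi^v(\{d > R\})$ is the tail of a \emph{finite} measure, so $\lambda_R \to 0$, which combined with the previous paragraph yields $c^{v,\infty}_1 = 0$.

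The main obstacle is the degenerate regime isolated by~\eqref{eq:G_BM:infinite jumpmeasure}: when $\sum_{l \in \cL(v)} c^{v,l}_2 + c^v_3 = 0$ there is neither a reflecting (continuous-excursion) nor a sticky component at $v$ and the jump measure is infinite, so the clean scaling $m_\e \asymp \e^{-1}$ driven by continuous excursions is unavailable. In this case $m_\e$, $\EV_v(\t_\e)$ and $K^v_\e$ must all be estimated from the short jumps and their excursion durations alone, and keeping the bound $\mu^v_\e(\{d > R\}) \lesssim \lambda_R$ uniform in $\e$ is the delicate point; I would expect to handle it through the explicit one-dimensional computations on the rays of the star graph, and, where convenient, by tracking $c^{v,\infty}_1$ through the successive-revival transformations recorded in Lemma~\ref{lem:G_IM:Feller data of revived BB on star graph}.
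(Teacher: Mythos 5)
Your overall strategy --- rule out escape of mass to infinity in the weak limit defining $\overbar{\mu}^v$ via a uniform tail bound on $\mu^v_\e$ --- is legitimate and genuinely different from the paper's, and your Portmanteau reduction and the localisation ``far exits can only be realised by a jump from $v$'' are sound (the latter needs the observation that the stopped process is equivalent in law to stopped one-dimensional Brownian motion and hence a.s.\ continuous inside an edge, in the spirit of Theorem~\ref{theo:G_BM:stopped BM is on starting edge}). But the proof has a genuine gap exactly where the theorem is hardest: the uniform-in-$\e$ estimate $\mu^v_\e(\{d>R\}) \lesssim \lambda_R$. Your mechanism for it, the scaling $m_\e \asymp \e^{-1}$ ``when a reflecting part is present,'' is unavailable precisely in the regime singled out by~\eqref{eq:G_BM:infinite jumpmeasure} ($\sum_l c^{v,l}_2 + c^v_3 = 0$, infinite jump measure), and your final paragraph concedes this (``I would expect to handle it \ldots''); an expectation is not a proof, and this degenerate regime is exactly where mass escaping to infinity must be excluded. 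In addition, the identity $\PV_v\big(d(v,X_{\t_\e})>R\big)=\lambda_R/m_\e$ presupposes a full It\^o excursion description of $X$ at $v$ (local time, a Poisson point process of excursions each consisting of an initial jump followed by a continuous Brownian stretch, $\lambda_1<\infty$), which is not available for an abstract process as in Definition~\ref{def:G_BM:BM} without invoking the pathwise classification of~\cite{WernerStar}; and the irregular cases ($v$ a trap, or a holding point with pure killing) need separate, if easy, treatment. It is worth recording that, granting the excursion picture, your argument can be closed \emph{without} any scaling of $m_\e$ and without case distinctions: by the definition of $K^v_\e$ one has, for $\e<1$,
\begin{align*}
 K^v_\e \, \EV_v(\t_\e) \;\geq\; \int_{\cG \bs \{v\}} \big( 1 - e^{-d(v,g)} \big) \, \PV_v\big( X_{\t_\e} \in dg \big)
 \;\geq\; \big( 1 - e^{-1} \big) \, \frac{\lambda_1}{m_\e},
\end{align*}
so that $\mu^v_\e(\{d>R\}) \leq \lambda_R \big/ \big( (1-e^{-1}) \lambda_1 \big)$ uniformly in $\e$ (trivially $0$ if $\lambda_1=0$), and only your return-time argument for $\lambda_1<\infty$ and $\lambda_R \downarrow 0$ remains.

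For comparison, the paper proves the statement by an entirely different, algebraic discrepancy argument that never estimates tails of exit measures. It revives $X$ instantly at $v$ (transfer measure $\e_v$) and computes the generator of the revived process $Y$ twice: once via resolvents (Lemma~\ref{lem:G_IM:Feller resolvent of revived X}), which converts the \emph{entire} killing weight $c_1 = c^{\D}_1 + c^{\infty}_1$ into a trivial jump to $v$ and thus removes it from the boundary condition; and once by rerunning the proof of Theorem~\ref{theo:G_BM:Feller data} on the revived paths (Lemma~\ref{lem:G_IM:Feller data of revived BB on star graph}), which removes only $c^{\D}_1$ and leaves $c^{\infty}_1$ intact. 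The uniqueness of the \FW\ data given the generator domain (Lemma~\ref{lem:G_IM:Feller data is unique}) then forces $c^{\infty}_1 = 0$. That route trades your excursion analysis for the resolvent formulas and revival machinery of~\cite{WernerStar}, and in particular bypasses the degenerate regime that your proposal leaves open.
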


In general, we are not able to determine a complete description of the generator domain, that is,
it does not appear to be easy to show that the boundary conditions, as given in Theorem~\ref{theo:BM characterization},
are also sufficient for the generator domain.
If the Brownian motion is assumed to be continuous up to its lifetime, \KPS\ showed in~\cite[Section~3]{KPS12}
that equality in~\eqref{eq:G_BM:generator boundary conditions} is attained. 
In~\cite[Lemma~2.6]{WernerStar}, we were able to prove sufficiency in the discontinuous setting for metric graphs with only one vertex:

\begin{theorem}  \label{theo:G_BM:Fellers theorem on star graph}
 Let $X$ be a Brownian motion on star graph $\cG$ with star point $v$.  Then $X$ is a Feller process with generator $A = \frac{1}{2} \D$,
 and there exist constants
 $p_1 \geq 0$, $p^{e}_2 \geq 0$ for each $e \in \cE$, $p_3 \geq 0$ and a measure $p_4$ on $\cG \bs \{v\}$ with
  \begin{align*}
    p_1 + \sum_{e \in \cE} p^{e}_2 + p_3 + \int_{\cG \bs \{v\}} \big( 1 - e^{-d(v,g)} \big) \, p^v_4 \big( dg \big) = 1,
  \end{align*}
 and
  \begin{align*}
    p_4 \big( \cG \bs \{v\} \big) = +\infty, \quad \text{if} \quad  \sum_{e \in \cE} p^{e}_2 + p_3 = 0,
  \end{align*}
 such that the domain of $A$ reads
  \begin{align*}
   \sD(A)  = 
       \Big\{ & f \in \cC^2_0(\cG): \\ 
              &       p_1 f(v) - \sum_{e \in \cE} p^{e}_2 f_e'(v) + \frac{p_3}{2} f''(v) - \int_{\cG \bs \{v\}} \big( f(g) - f(v) \big) \, p_4(dg) = 0 \Big\}. 
  \end{align*}
 Furthermore, $X$ is uniquely characterized by this set of normalized constants.
\end{theorem}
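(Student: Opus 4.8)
The plan is to read off the Feller property, the form $A=\tfrac12\D$, the existence of the constants, and the inclusion ``$\subseteq$'' directly from Theorem~\ref{theo:BM characterization}, and then to upgrade the inclusion to an equality by a resolvent argument that is special to the star graph. First I would observe that a star graph is a metric graph whose vertex set is the singleton $\cV=\{v\}$; since it carries no loops, no edge can have both endpoints at $v$, so every edge is a semi-infinite ray, $\cI(v)=\emptyset$ and $\cL(v)=\cE(v)=\cE$. Applying Theorem~\ref{theo:BM characterization} to this graph yields that $X$ is Feller with $\cC_0$-generator $A=\tfrac12\D$ and $\sD(A)\subseteq\cC^2_0(\cG)$, together with constants $p_1\ge 0$, $(p^e_2)_{e\in\cE}$, $p_3\ge 0$ and a measure $p_4$ on $\cG\bs\{v\}$ obeying the stated normalization and the infinite-mass condition~\eqref{eq:G_BM:infinite jumpmeasure}, and with $\sD(A)$ contained in the set $\sD_1$ of all $f\in\cC^2_0(\cG)$ satisfying the boundary relation at $v$. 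It then remains to prove the reverse inclusion $\sD_1\subseteq\sD(A)$ and the uniqueness statement.

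For the reverse inclusion I would invoke the maximality of Feller generators: if $A$ generates a Feller semigroup, $A\subseteq A_1$ for a linear operator $A_1$, and $\l-A_1$ is injective for some $\l>0$, then $A=A_1$. Indeed, $\l-A$ maps $\sD(A)$ onto all of $\cC_0(\cG)$, so for $f\in\sD(A_1)$ one finds $f_0\in\sD(A)$ with $(\l-A)f_0=(\l-A_1)f$; since $A_1$ extends $A$ and $\l-A_1$ is injective, $f=f_0\in\sD(A)$. I set $A_1:=\tfrac12\D$ with domain $\sD_1$; by the first paragraph $A\subseteq A_1$, so everything reduces to the injectivity of $\l-A_1$ for a single convenient value, which I take to be $\l=\tfrac12$.

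To verify this injectivity, suppose $f\in\sD_1$ solves $\tfrac12 f''=\tfrac12 f$, i.e.\ $f''=f$ on each ray. Since each edge is a copy of $[0,\infty)$ and $f\in\cC_0(\cG)$ decays at infinity, continuity at $v$ forces $f_e(x)=f(v)\,e^{-x}$ on every edge $e$; hence the inward derivatives are $f_e'(v)=-f(v)$, the second derivative extends continuously with $f''(v)=f(v)$, and $f(g)-f(v)=f(v)\big(e^{-d(v,g)}-1\big)$. Substituting into the boundary relation and factoring out $f(v)$ gives
\begin{align*}
  f(v)\,\Big( p_1 + \sum_{e\in\cE} p^e_2 + \tfrac{p_3}{2} + \int_{\cG\bs\{v\}}\big(1-e^{-d(v,g)}\big)\,p_4(dg)\Big)=0.
\end{align*}
By the normalization the parenthesis equals $1-\tfrac{p_3}{2}\ge\tfrac12>0$, so $f(v)=0$ and therefore $f\equiv 0$. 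Thus $\l-A_1$ is injective, $A=A_1$, and $\sD(A)=\sD_1$, which is the asserted equality.

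Finally, uniqueness follows from the Hille--Yosida theorem: the normalized constants determine the operator $A=\tfrac12\D$ together with its domain $\sD_1$, hence determine the $\cC_0$-generator, and a Feller semigroup --- and with it the finite-dimensional distributions of $X$ --- is uniquely determined by its generator; conversely the constants are recovered from $X$ through its exit data as in Theorem~\ref{theo:G_BM:Feller data}. The only genuinely substantial step is the sufficiency, and I expect its heart to be exactly the decoupling used above: on a star graph the resolvent equation splits into independent scalar ODEs on the rays whose bounded solutions are explicit, so the boundary relation collapses to a single scalar identity in $f(v)$. On a general metric graph this fails, because the jump measure $p^v_4$ may charge neighborhoods of other vertices and thereby couple the boundary relations across $\cV$; this is precisely why only the inclusion in~\eqref{eq:G_BM:generator boundary conditions}, and not equality, is available there.
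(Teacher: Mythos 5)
Your proposal is correct in its main content, but it takes a genuinely different route from the paper's. The paper does not reprove sufficiency here: it imports the equality of $\sD(A)$ with the boundary-condition set from part~I (\cite[Lemma~2.6]{WernerStar}), where it is obtained constructively --- a Brownian motion realizing any admissible normalized data is built pathwise and its resolvent computed in closed form (these are the explicit formulas this paper quotes, e.g.\ in the proof of Lemma~\ref{lem:G_IM:Feller data is unique}). You instead combine the inclusion from Theorem~\ref{theo:BM characterization} with maximality of Feller generators: since $\l - A$ maps $\sD(A)$ onto $\cC_0(\cG)$ and $A \subseteq A_1$, where $A_1 = \frac{1}{2}\D$ on the boundary-condition set $\sD_1$, injectivity of $\l - A_1$ for a single $\l > 0$ forces $A = A_1$; and at $\l = \frac{1}{2}$ every solution of $f'' = f$ in $\cC^2_0(\cG)$ on a star graph is $f_e(x) = f(v)\,e^{-x}$, so the boundary functional collapses to $f(v)\bigl(1 - \frac{p_3}{2}\bigr)$ with $1 - \frac{p_3}{2} \geq \frac{1}{2} > 0$. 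I checked the details: the directional derivatives $f_e'(v) = -f(v)$, the value $f''(v) = f(v)$, the sign of the bracket, and the use of linearity of $\sD_1$ in the difference trick are all in order. Two minor points you should make explicit: the boundary integral converges absolutely for every $f \in \cC^2_0(\cG)$ (near $v$ one has $\abs{f(g) - f(v)} = O\bigl(d(v,g)\bigr)$ while $p_4$ integrates $1 - e^{-d(v,\cdot)}$, and $p_4$ has finite mass away from $v$), so $\sD_1$ is well defined; and $(\l - A_1)f \in \cC_0(\cG)$ holds because $f'' $ extends to $\cC_0(\cG)$ by the definition of $\cC^2_0(\cG)$. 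Your route is shorter and avoids any construction; the paper's constructive route additionally yields the explicit resolvent expressions that this paper reuses in Section~\ref{sec:G_IM:further results}.

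The one soft spot is the final uniqueness sentence. Read as ``the normalized constants determine $X$,'' your Hille--Yosida argument is complete: the constants determine $\sD_1 = \sD(A)$, hence the $\cC_0$-generator, hence the semigroup and the process. But the converse --- that no two \emph{different} normalized families cut out the same domain, so that the constants are genuinely recovered from $X$ --- is not settled by your appeal to Theorem~\ref{theo:G_BM:Feller data}: that theorem produces \emph{some} constants along a subsequence and does not identify them with a given family satisfying the same domain identity. The paper flags exactly this as non-obvious in the presence of non-local terms and devotes Lemma~\ref{lem:G_IM:Feller data is unique} to it, via explicit resolvents at $v$, the limits $\a \rightarrow \infty$, and Laplace-transform uniqueness. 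If the theorem's last sentence is meant to include that converse, your proof has a gap precisely there; under the weaker (and more natural) reading, you are done. Your closing heuristic about why equality fails on general graphs is speculative --- the obstruction in your scheme would be the injectivity of $\l - A_1$ for the coupled system, which you assert but do not show to fail --- though nothing in the star-graph proof depends on it.
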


For the interval case, the corresponding result is shown in~\cite[Section~17]{Werner16},
demonstrating the technical difficulties that already arise for metric graphs with only two vertex points.
We cite it here for completeness:

\begin{theorem} \label{theo:B_IN:Feller data}
 Let $X$ be a Brownian motion on $[a,b]$.
 Then $X$ is a Feller process with generator $A = \frac{1}{2} \D$.
 There exist constants
 $p^a_1 \geq 0$, $p^a_2 \geq 0$, $p^a_3 \geq 0$ and a measure $p^a_4$ on $(a, b]$
 as well as
 $p^b_1 \geq 0$, $p^b_2 \geq 0$, $p^b_3 \geq 0$ and a measure $p^b_4$ on $[a, b)$,
 satisfying
  \begin{align*}
    p^a_1 + p^a_2 + p^a_3 + \int_{(a,b]} \big( 1 \wedge x \big) \, p^a_4 (dx) & = 1, \\
    p^b_1 + p^b_2 + p^b_3 + \int_{[a,b)} \big( 1 \wedge x \big) \, p^b_4 (dx) & = 1,
  \end{align*}
 and
   \begin{align*}
    p^a_4 \big( (a,b] \big) = +\infty, & \quad \text{if} \quad p^a_2 = p^a_3 = 0, \\
    p^b_4 \big( [a,b) \big) = +\infty, & \quad \text{if} \quad p^b_2 = p^b_3 = 0,
  \end{align*}
 such that the domain of the generator of $X$ reads
  \begin{align*}
    \sD(A) = \Big\{ & f \in \cC_0^2(\R_+):  \\ 
                    & p^a_1 \, f(a) - p^a_2 \, f'(a+) + \frac{p^a_3}{2} \, f''(a+) - \int_{(a,b]} \big( f(x) - f(a) \big) \, p^a_4(dx) = 0, \\
                    & p^b_1 \, f(b) + p^b_2 \, f'(b-) + \frac{p^b_3}{2} \, f''(b-) - \int_{[a,b)} \big( f(x) - f(b) \big) \, p^b_4(dx) = 0 ~ \Big\}.
  \end{align*}
\end{theorem}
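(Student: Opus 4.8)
The plan is to read off everything except the equality of domains from the general theory, and then to obtain the missing inclusion ``$\supseteq$'' by a uniqueness argument for the resolvent equation.

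Regard $[a,b]$ as a metric graph with vertex set $\cV = \{a,b\}$ joined by a single edge $e$ of finite length, so that $\cL(a) = \cL(b) = \{e\}$. Specializing Theorem~\ref{theo:BM characterization} and Theorem~\ref{theo:G_BM:Feller data} to this graph immediately yields the Feller property, the identification $A = \tfrac12\D$ with $\sD(A) \subseteq \cC_0^2$, the inclusion ``$\subseteq$'' of the domain description, and, at each endpoint $v \in \{a,b\}$, constants satisfying the \FW\ relation together with the normalization. I would set the $p$-constants equal to the corresponding $c$-constants of Theorem~\ref{theo:G_BM:Feller data}; since there is a single edge at each vertex, $\sum_{l \in \cL(v)} c^{v,l}_2 = c^{v,e}_2 =: p^v_2$, and the inward derivatives are $f_e'(a) = f'(a+)$ and $f_e'(b) = -f'(b-)$, which reproduces the sign pattern of the two stated boundary conditions. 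Because $[a,b]$ is bounded, its end compactification contains no edge point ``at infinity'', so $c^{v,\infty}_1 = 0$ and $p^v_1 = c^{v,\D}_1$. Passing from the weight $1 - e^{-d(v,\cdot)}$ of Theorem~\ref{theo:G_BM:Feller data} to the equivalent weight appearing in the statement and rescaling to normalize leaves the integrability class and the degenerate-case dichotomy~\eqref{eq:G_BM:infinite jumpmeasure} intact, which gives the stated normalization and the two ``$p_4(\,\cdot\,)=+\infty$'' conditions.

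It remains to prove the reverse inclusion. Write $B = \tfrac12\D$ for the operator with domain $\sD(B)$ equal to the candidate set on the right-hand side (all $f \in \cC_0^2$ obeying both boundary conditions); by the previous step $A \subseteq B$. Fix $\a > 0$ and let $R_\a = (\a - A)^{-1}$ be the resolvent of $X$, which maps $\cC_0$ into $\sD(A) \subseteq \sD(B)$. For $f \in \sD(B)$ put $g = (\a - B)f = \a f - \tfrac12 f'' \in \cC_0$. Then $R_\a g \in \sD(A)$ and $(\a - B)R_\a g = (\a - A)R_\a g = g$, so $f - R_\a g \in \sD(B)$ lies in the kernel of $\a - B$. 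Consequently $\sD(B) \subseteq \sD(A)$ --- and hence the whole theorem --- follows once I show that $\a - B$ is \emph{injective} on $\sD(B)$; that is, the homogeneous nonlocal boundary value problem
\begin{align*}
  \tfrac12 f'' = \a f \text{ on } (a,b), \qquad f \in \cC_0^2([a,b]), \qquad \text{both boundary conditions},
\end{align*}
has only the trivial solution.

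I expect this injectivity to be the main obstacle, and I would establish it by a maximum principle argument. Any solution of $\tfrac12 f'' = \a f$ with $\a > 0$ is strictly convex where it is positive, so a positive maximum cannot be attained at an interior point and must sit at $a$ or $b$. Since $f \not\equiv 0$, after possibly replacing $f$ by $-f$ we may assume $M := \max_{[a,b]} f > 0$, and we treat the case $f(a) = M$ (the endpoint $b$ being symmetric). Evaluating the boundary condition at $a$ and using $f(a) = M \geq f(x)$, $f'(a+) \leq 0$, and $f''(a+) = 2\a M > 0$ exhibits the left-hand side as a sum of four nonnegative terms that must each vanish; this forces $p^a_1 = p^a_3 = 0$, $p^a_2\,f'(a+) = 0$, and $f \equiv M$ $\ p^a_4$-a.e. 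One then rules out the remaining regimes: if $p^a_2 > 0$ then $f'(a+) = 0$ and the explicit solution $f(x) = M\cosh(\srl(x-a))$ exceeds $M$ for $x > a$, contradicting maximality; if $p^a_2 = p^a_3 = 0$, the dichotomy~\eqref{eq:G_BM:infinite jumpmeasure} makes $p^a_4$ an infinite measure, while the explicit solution equals $M$ on at most one further point of $(a,b]$, so ``$f = M$ $\ p^a_4$-a.e.'' together with the finiteness of the normalizing integral of $p^a_4$ is impossible. In every case $f \not\equiv 0$ leads to a contradiction, giving injectivity. The delicate part throughout is precisely the purely non-local, infinite-mass regime, where the boundary condition carries no derivative or second-derivative term to exploit and one must instead play the infinite mass of $p^a_4$ against its integrability weight --- this is the source of the technical difficulty already present for two vertices.
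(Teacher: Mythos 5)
Your proposal is correct, but it reaches the hard half of the theorem by a genuinely different route than the paper's. The paper does not prove Theorem~\ref{theo:B_IN:Feller data} in-text at all: it cites \cite[Section~17]{Werner16}, and the method there (as in the star-graph analogue \cite[Lemma~2.6, Theorems~4.31, 4.33]{WernerStar}, on which Lemma~\ref{lem:G_IM:Feller data is unique} also relies) is \emph{constructive}: for every admissible parameter set one builds a Brownian motion realizing it, computes its resolvent explicitly --- on the interval this means solving the coupled linear system for the two boundary values $U_\a f(a)$, $U_\a f(b)$, which is exactly the two-vertex interplay the paper alludes to as ``technical difficulties'' --- and reads off the domain equality from the resolvent formula. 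You instead take the forward inclusion and the normalized data (with $c^{v,\infty}_1 = 0$ since $\cE = \emptyset$, and the correct sign bookkeeping $f_e'(b) = -f'(b-)$) from Theorems~\ref{theo:BM characterization} and~\ref{theo:G_BM:Feller data}, and upgrade inclusion to equality abstractly: since $\a - A$ is surjective onto $\cC_0$ with inverse $U_\a$ and $A \subseteq B$, equality follows once $\a - B$ is injective on the candidate domain, which you establish by a maximum-principle analysis of $\tfrac12 f'' = \a f$. Your case analysis is complete: at a maximizing endpoint all four boundary terms are nonnegative and must vanish, the regime $p^v_2 > 0$ is killed by $\cosh(\srl\,(\cdot - a))$ exceeding its value at the endpoint, and the purely non-local regime is killed because $\{f = M\}$ meets the jump set in at most one point with strictly positive normalization weight, so an atom there cannot carry the infinite mass forced by the dichotomy~\eqref{eq:G_BM:infinite jumpmeasure}. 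It is worth noting that your argument leans essentially on that dichotomy at \emph{both} endpoints, and rightly so: without it, injectivity genuinely fails (e.g.\ $p^a_4 = c\,\delta_b$, $p^b_4 = c'\,\delta_a$ with all other weights zero admits a nontrivial symmetric ``dipping'' kernel element), so invoking Theorem~\ref{theo:BM characterization} for the constants is not optional. The trade-off between the two approaches: yours is shorter, avoids any pathwise construction, and suffices for the domain-equality claim as stated; the constructive route additionally yields existence of a process for \emph{every} admissible data set and the uniqueness of the \FW\ data, which the paper needs elsewhere (Lemma~\ref{lem:G_IM:Feller data is unique}, Theorem~\ref{theo:G_IM:Feller data c_infty vanishes}) and which your argument does not deliver. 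The only cosmetic point is the weight: you correctly note $1 \wedge d(v,\cdot)$ and $1 - e^{-d(v,\cdot)}$ are comparable, so the rescaling preserves both the normalization and the infinite-mass dichotomy.
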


For Brownian motions on general metric graphs, the complete description of the generator domain remains unsolved.
\section{Definition and Fundamental Properties} \label{sec:G_BM:def}
As already explained in the introduction, it is suitable to characterize Brownian motions on metric graphs 
by their generators, which is the objective of this section. 

After giving the rigorous definition of a ``Brownian motion on a metric graph'', we collect some basic properties of such a process
by utilizing its locally ``one-dimensional Brownian behavior'' on the edges and applying some classical results for the half-line and interval cases.  
We are then able to analyze the resolvents---yielding their Feller property---and the generators of Brownian motions on metric graphs,
giving explicit formulas for the computation of their ``\FW'' boundary conditions. 
For a short summary of results on Markov and Feller processes, which are used throughout this section, the reader may refer to Appendix~\ref{app:Feller processes}.

As announced in the introduction, a Brownian motion on a metric graph~$\cG$ is a right continuous, strong Markov process on $\cG$ 
with a local one-dimensional Brownian behavior. More precisely, the local coordinate of such a process, 
if stopped once the process leaves its starting edge,
needs to be equivalent to the Brownian motion on $\R$, stopped when leaving the corresponding interval of the process' initial edge.
Extending the definition of~\cite{KPS12} and~\cite[Chapter~6]{Knight81} to the discontinuous setting of~\cite{ItoMcKean63}, we set:

\begin{definition} \label{def:G_BM:BM}
 Let \processX\ be a right continuous, strong Markov process on a metric graph $\cG$. 
 $X$ is a \textdef{Brownian motion on}~$\cG$, if
 for all $g = (l,x) \in \cG$, the random time 
  \begin{align*}
   H_X := \inf \big\{ t \geq 0: X_t \notin l^0 \big\}, \quad 
   \text{with } l^0 = \{l\} \times ( 0, \cR_l ),
  \end{align*}
 is a stopping time over $(\sG_t, t \geq 0)$,
 and 
  \begin{align*}
   \EV_{(l,x)} \big( f_1(X_{t_1 \wedge H_X}) \cdots f_n(X_{t_n \wedge H_X}) \big)
   = \EV^B_{x} \big( f_1(l, B_{t_1 \wedge H_B}) \cdots f_n(l, B_{t_n \wedge H_B}) \big)
  \end{align*}
 holds for all $n \in \N$, $f_1, \ldots, f_n \in b\sB(\cG)$, $t_1, \ldots, t_n \in \R_+$,
 with $B$ being the Brownian motion on $\R$ and $H_B := \inf \big\{ t \geq 0: B_t \notin ( 0, \cR_l ) \big\}$. 
\end{definition}

As we are dealing with potentially discontinuous processes, we needed to pay special attention to the measurability of the 
debut~$H_X$ of the closed set~$\comp l^0$ in the above definition. 
This technical requirement on~$H_X$ is always fulfilled in the following two common cases: 
If the Brownian motion on~$\cG$ is known to be constructed
with the help of continuous excursions of a ``standard'' one-dimensional Brownian motion 
and thus features continuity while running inside any edge (cf.~\cite{WernerStar} and~part~II),
that is, continuity until~$H_X$, \cite[Theorem~49.5]{Bauer96} ensures the stopping time property
of~$H_X$.
Otherwise, the measurability of~$H_X$ can always achieved by working in the context of usual hypotheses (cf.~\cite[Sections~10, A.5]{Sharpe88}).

We first need to collect some basic properties of Brownian motions on metric graphs.
Most of them are implicitly used without proof in earlier works, such as in \cite{ItoMcKean63}, \cite{Knight81}, or \cite{KPS12}, 
and may be attained quite easily in the continuous setting.
However, a little bit more care is needed for discontinuous Brownian motions.

For all that follows, let $X$ be a Brownian motion on a metric graph $\cG$, 
$H_X$ be the first exit time from $l^0 = \{l\} \times (0, \cR_l)$ for a given initial point $g = (l,x) \in \cG$, 
as well as $B$ be the one-dimensional Brownian motion with the first exit time~$H_B$ from the corresponding edge interval $(0, \cR_l)$, 
as specified in Definition~\ref{def:G_BM:BM}. 
As usual, we identify any edge $l \in \cL$ with its geometric representation $\{l\} \times [0, \cR_l]$,
where we set $[0, \cR_l] := [0, +\infty)$ if $\cR_l = +\infty$.

We start with some basic results on $H_X$. The first property follows directly from the right-continuity of~$X$ (and of~$B$):

\begin{lemma} \label{lem:G_BM:exit time dist, basic}
 For all $t \geq 0$, 
  \begin{align*}
    \{ H_X \leq t \} = \{ X_{t \wedge H_X} \in \comp l^0 \}
    \quad \text{and} \quad
    \{ H_B \leq t \} = \{ B_{t \wedge H_B} \in \comp (0, \cR_l) \}.
  \end{align*}
\end{lemma}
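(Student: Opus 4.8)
The plan is to treat each of the two displayed identities as the standard characterization of the debut of a closed set, using only that $\comp l^0$ is closed and that the paths are right-continuous. The two equalities are proved by the same argument, so I would carry it out in full for $X$ and then remark that it transfers verbatim to $B$ (where, in fact, continuity is available, which is stronger than needed). Throughout I fix $t \geq 0$ and record at the outset that $l^0 = \{l\} \times (0, \cR_l)$ is open in $\cG$, so that $F := \comp l^0$ is closed; this closedness is the property that will do the real work.

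First I would prove the inclusion $\{H_X \leq t\} \subseteq \{X_{t \wedge H_X} \in F\}$. On the event $\{H_X \leq t\}$ one has $t \wedge H_X = H_X < \infty$, so it suffices to show $X_{H_X} \in F$. By the definition of $H_X$ as an infimum of the (then nonempty) set $\{s \geq 0 : X_s \in F\}$, for every $n$ there is a time $s_n$ with $H_X \leq s_n < H_X + 1/n$ and $X_{s_n} \in F$. Thus $s_n \geq H_X$ and $s_n \to H_X$, so right-continuity of $X$ at $H_X$ yields $X_{s_n} \to X_{H_X}$, and since $F$ is closed we conclude $X_{H_X} \in F$, as required.

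For the reverse inclusion $\{X_{t \wedge H_X} \in F\} \subseteq \{H_X \leq t\}$ I would argue by contradiction. Suppose $X_{t \wedge H_X} \in F$ but $H_X > t$. Then $t \wedge H_X = t$, so the hypothesis reads $X_t \in F$, i.e.\ $X_t \notin l^0$; but this places $t$ in the set $\{s \geq 0 : X_s \in F\}$ whose infimum is $H_X$, forcing $H_X \leq t$, a contradiction. Hence $H_X \leq t$. The identical reasoning with $(0, \cR_l)$ in place of $l^0$ and $B$ in place of $X$ gives the second equality.

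The only step that genuinely uses the hypotheses of the lemma, rather than being purely set-theoretic, is the attainment $X_{H_X} \in F$ in the first inclusion, and this is exactly where right-continuity together with the closedness of $\comp l^0$ is indispensable: for a merely right-continuous, possibly jumping, process one cannot invoke path continuity, so the target set must be closed for the limit to remain inside it. This is the main (and essentially the sole) subtlety; everything else is bookkeeping about the infimum and the truncation at $t \wedge H_X$.
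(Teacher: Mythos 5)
Your proof is correct, and it is precisely the argument the paper has in mind: the paper states this lemma without proof, remarking only that it "follows directly from the right-continuity of $X$ (and of $B$)", and your write-up fleshes out exactly that — the debut of the closed set $\comp l^0$ is attained along a sequence $s_n \downarrow H_X$ with $X_{s_n} \in \comp l^0$, so right-continuity plus closedness give $X_{H_X} \in \comp l^0$, while the reverse inclusion is the elementary infimum argument. No gaps; the identification of the closedness of $\comp l^0$ as the essential ingredient (rather than path continuity, which is unavailable for the possibly discontinuous $X$) is exactly the point the paper is making.
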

%

\begin{corollary} \label{cor:G_BM:exit time dist, basic}
 For all $g = (l,x) \in \cG$,
  \begin{align*}
   \PV_{(l,x)} \circ H_X^{-1} = \PV^B_{x} \circ H_B^{-1},
  \end{align*}
 in particular, we have
  \begin{align*}
   \PV_{(l,x)} (H_X < +\infty) = \PV^B_{x}(H_B < +\infty) = 1.
  \end{align*}
\end{corollary}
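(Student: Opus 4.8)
The plan is to reduce the statement about the exit–time laws entirely to the one–dimensional identity built into Definition~\ref{def:G_BM:BM}, using Lemma~\ref{lem:G_BM:exit time dist, basic} to rewrite the events $\{H_X \le t\}$ and $\{H_B \le t\}$ as events about the \emph{stopped} processes $X_{t \wedge H_X}$ and $B_{t \wedge H_B}$ evaluated by a single bounded Borel test function. The first observation is that the law of a $[0,+\infty]$-valued random variable is completely determined by its distribution function $t \mapsto \PV(H \le t)$, $t \ge 0$ (the mass at $+\infty$ being the defect $1 - \lim_{t\to\infty}\PV(H \le t)$, so that equality of the distribution functions on $[0,\infty)$ forces the atoms at $+\infty$ to coincide as well). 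Hence it suffices to prove $\PV_{(l,x)}(H_X \le t) = \PV^B_x(H_B \le t)$ for every $t \ge 0$.

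For this, I would use Lemma~\ref{lem:G_BM:exit time dist, basic} to write $\PV_{(l,x)}(H_X \le t) = \EV_{(l,x)}\big(\1_{\comp l^0}(X_{t \wedge H_X})\big)$, then apply the defining identity of Definition~\ref{def:G_BM:BM} with $n=1$ and the bounded Borel test function $f_1 = \1_{\comp l^0} \in b\sB(\cG)$, obtaining $\EV_{(l,x)}\big(\1_{\comp l^0}(X_{t \wedge H_X})\big) = \EV^B_x\big(\1_{\comp l^0}(l, B_{t \wedge H_B})\big)$. The one small point to verify here is that, since $B$ is continuous and $H_B$ is its exit time from $(0,\cR_l)$, the stopped value $B_{t \wedge H_B}$ always lies in $[0,\cR_l]$, so that $(l, B_{t \wedge H_B})$ is a genuine point of the closed edge and $\1_{\comp l^0}(l, B_{t \wedge H_B}) = \1_{\comp (0,\cR_l)}(B_{t \wedge H_B})$. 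A final application of Lemma~\ref{lem:G_BM:exit time dist, basic} to the one–dimensional process then yields $\EV^B_x\big(\1_{\comp (0,\cR_l)}(B_{t \wedge H_B})\big) = \PV^B_x(H_B \le t)$, which establishes equality of the distribution functions and therefore $\PV_{(l,x)} \circ H_X^{-1} = \PV^B_x \circ H_B^{-1}$.

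For the ``in particular'' assertion I would invoke the classical fact that a one–dimensional Brownian motion started at $x$ leaves the interval $(0,\cR_l)$ in a.s.\ finite time, i.e.\ $\PV^B_x(H_B < +\infty) = 1$ (for $\cR_l = +\infty$ this is the a.s.\ finiteness of the hitting time of $0$); transporting this through the identity of laws just proved gives $\PV_{(l,x)}(H_X < +\infty) = 1$. The argument is essentially bookkeeping: the only genuinely delicate point is the identification of the graph–indicator $\1_{\comp l^0}$ with the interval–indicator $\1_{\comp (0,\cR_l)}$ along the stopped trajectory, together with the remark that equality of distribution functions on $[0,\infty)$ does pin down the full law on $[0,+\infty]$, so that no mass can be silently lost at infinity.
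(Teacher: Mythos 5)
Your proof is correct and is essentially the argument the paper intends: the corollary is stated as an immediate consequence of Lemma~\ref{lem:G_BM:exit time dist, basic} combined with the defining identity of Definition~\ref{def:G_BM:BM} applied with $n=1$ and $f_1 = \1_{\comp l^0}$, which is exactly your reduction. Your two side remarks --- that the stopped path $(l, B_{t \wedge H_B})$ stays on the closed edge so the graph indicator matches the interval indicator, and that equality of distribution functions on $[0,\infty)$ pins down the law on $[0,+\infty]$ including the atom at infinity --- are the right points to verify, and the finiteness assertion indeed reduces to the classical a.s.\ finiteness of the one-dimensional exit time.
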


These results will be considerately improved in Theorem~\ref{theo:G_BM:exit distributions} below.
For the time being, they are sufficient to deduce a slightly more general property of the distributions of the stopped Brownian motion:

\begin{lemma} \label{lem:G_BM:distributions stopped process}
 For $g = (l,x) \in \cG$, $n \in \N$, $f_1, \ldots, f_n, h \in b\sB(\cG)$, $0 \leq t_1 \leq \cdots \leq t_n$,
  \begin{align*}
   & \EV_{(l,x)} \big( f_1(X_{t_1}) \cdots f_n(X_{t_n}) \, h(X_{H_X}) ; t_n < H_X \big) \\
   & = \EV^B_{x} \big( f_1(l, B_{t_1}) \cdots f_n(l, B_{t_n}) \, h(l, B_{H_B}) ; t_n < H_B \big).
  \end{align*}
\end{lemma}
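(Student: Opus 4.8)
The plan is to deduce the identity from the defining finite-dimensional relation in Definition~\ref{def:G_BM:BM}, which speaks only about the process \emph{stopped} at $H_X$ and evaluated at fixed deterministic times. Two features prevent a direct application: the restriction to the event $\{t_n < H_X\}$, and the factor $h(X_{H_X})$, which records the value of $X$ at the \emph{random} exit time $H_X$ rather than at a fixed time. I would remove both obstructions by rewriting the entire integrand as a functional of the stopped process $(X_{t \wedge H_X})_{t \geq 0}$, so that Definition~\ref{def:G_BM:BM} becomes applicable verbatim.

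First I would encode the event. By Lemma~\ref{lem:G_BM:exit time dist, basic}, $\{t_n < H_X\} = \{X_{t_n \wedge H_X} \in l^0\}$, hence $\1_{\{t_n < H_X\}} = \1_{l^0}(X_{t_n \wedge H_X})$, and I absorb this factor into $f_n$ by setting $\tf_n := f_n \, \1_{l^0} \in b\sB(\cG)$. Moreover, on $\{t_n < H_X\}$ one has $t_i \wedge H_X = t_i$ for every $i \leq n$, so $f_i(X_{t_i}) = f_i(X_{t_i \wedge H_X})$ there; since the extra indicator vanishes off the event, this replacement is exact. Next I dispose of the exit value: fixing any $s \geq t_n$ and using $h(X_{s \wedge H_X})$ in place of $h(X_{H_X})$, the integrand becomes a product of the form $f_1(X_{t_1 \wedge H_X}) \cdots \tf_n(X_{t_n \wedge H_X}) \, h(X_{s \wedge H_X})$ with ordered times $t_1 \leq \cdots \leq t_n \leq s$. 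Definition~\ref{def:G_BM:BM}, applied with $n+1$ functions and time points, equates this with the analogous expression for the one-dimensional Brownian motion $B$ stopped at $H_B$.

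I would then run the same two rewritings backwards on the $B$-side. The companion identity $\{t_n < H_B\} = \{B_{t_n \wedge H_B} \in (0, \cR_l)\}$ from Lemma~\ref{lem:G_BM:exit time dist, basic} turns $\tf_n(l, B_{t_n \wedge H_B})$ back into $f_n(l, B_{t_n}) \, \1_{\{t_n < H_B\}}$, and on that event $B_{t_i \wedge H_B} = B_{t_i}$. This yields, for every $s \geq t_n$,
\begin{align*}
 & \EV_{(l,x)} \big( f_1(X_{t_1}) \cdots f_n(X_{t_n}) \, h(X_{s \wedge H_X}) ; t_n < H_X \big) \\
 & = \EV^B_{x} \big( f_1(l, B_{t_1}) \cdots f_n(l, B_{t_n}) \, h(l, B_{s \wedge H_B}) ; t_n < H_B \big).
\end{align*}

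Finally I would let $s \to \infty$. By Corollary~\ref{cor:G_BM:exit time dist, basic} both $H_X$ and $H_B$ are a.s.\ finite, so pathwise $X_{s \wedge H_X} = X_{H_X}$ and $B_{s \wedge H_B} = B_{H_B}$ for all sufficiently large $s$; thus $h(X_{s \wedge H_X}) \to h(X_{H_X})$ and $h(l, B_{s \wedge H_B}) \to h(l, B_{H_B})$ pointwise. The integrands are uniformly bounded by $\norm{f_1}_\infty \cdots \norm{f_n}_\infty \norm{h}_\infty$, so dominated convergence passes the limit through both expectations and delivers the claimed equality. The only genuinely delicate point is this final device: legitimately replacing $h(X_{H_X})$ by $h(X_{s \wedge H_X})$ and justifying the limit; everything else is the routine bookkeeping of verifying that each rewritten integrand is truly a function of the stopped process, so that the defining identity applies. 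I would emphasize that only the a.s.\ finiteness of the exit times—no further path regularity—is needed for the limiting step.
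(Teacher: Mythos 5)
Your proposal is correct and follows essentially the same route as the paper's proof: encoding $\{t_n < H_X\}$ via Lemma~\ref{lem:G_BM:exit time dist, basic} as $\1_{l^0}(X_{t_n \wedge H_X})$, replacing $h(X_{H_X})$ by $h(X_{s \wedge H_X})$ so that Definition~\ref{def:G_BM:BM} applies at the fixed times $t_1 \leq \cdots \leq t_n \leq s$, and then letting $s \to \infty$ by dominated convergence using the a.s.\ finiteness of $H_X$ and $H_B$ from Corollary~\ref{cor:G_BM:exit time dist, basic}. The paper carries out exactly these steps, merely keeping the indicator as a separate factor rather than absorbing it into $f_n$.
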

\begin{proof}
 Observe that, because $H_X < +\infty$ a.s.\ (by Corollary~\ref{cor:G_BM:exit time dist, basic}) and $X_{s \wedge H_X} = X_{H_X}$ holds for all $s \geq H_X$,
 we have
  \begin{align*}
    \lim_{s \rightarrow \infty} h(X_{s \wedge H_X}) = h(X_{H_X}) \quad \text{a.s.},
  \end{align*}
 and analogously,
  \begin{align*}
    \lim_{s \rightarrow \infty} h(l, B_{s \wedge H_B}) = h(l, B_{H_B}) \quad \text{a.s.}\,.
  \end{align*}
 Thus, by using Lebesgue's dominated convergence theorem and the defining properties of a Brownian motion on a metric graph, we conclude that
  \begin{align*}
   & \EV_{(l,x)} \big( f_1(X_{t_1}) \cdots f_n(X_{t_n}) \, h(X_{H_X}) ; t_n < H_X \big) \\
   & = \lim_{s \rightarrow \infty} \EV_{(l,x)} \big( f_1(X_{t_1 \wedge H_X}) \cdots f_n(X_{t_n \wedge H_X}) \, h(X_{s \wedge H_X}) \, \1_{l^0}(X_{t \wedge H_X}) \big) \\
   & = \lim_{s \rightarrow \infty} \EV^B_{x} \big( f_1(l, B_{t_1 \wedge H_B}) \cdots f_n(B_{t_n \wedge H_B}) \, h(l, B_{s \wedge H_B}) \, \1_{l^0}(l, B_{t \wedge H_B}) \big) \\
   & = \EV^B_{x} \big( f_1(l, B_{t_1}) \cdots f_n(l, B_{t_n}) \, h(l, B_{H_B}) ; t_n < H_B \big). \qedhere
  \end{align*}
\end{proof}

The above lemma allows us to achieve an equivalent set of defining properties for Brownian motions on metric graphs. They will turn out to be more suitable
for our work, as they are based on the (local) resolvent and the exit behavior of the process rather than on its stopped distributions:

\begin{theorem} \label{theo:G_BM:resolvent characterization of BM on MG}
 Let $X$ be a right continuous, strong Markov process on $\cG$. 
 $X$ is a Brownian motion on $\cG$, if and only if
 for all $g = (l,x) \in \cG$, 
 the following assertions hold:
  \begin{enumerate}
   \item for all $\a > 0$, $f \in b\sB(\cG)$,  \label{itm:G_BM:resolvent characterization of BM on MG, i}
     \begin{align*}
      \EV_{(l,x)} \Big( \int_0^{H_X} e^{-\a t} \, f(X_t) \, dt \Big)
      = \EV^B_{x} \Big( \int_0^{H_B} e^{-\a t} \, f(l, B_t) \, dt \Big),
     \end{align*}
   \item $\displaystyle \PV_{(l,x)} \circ \big( H_X, X_{H_X} \big)^{-1} = \PV^B_{x} \circ \big( H_B, (l, B_{H_B}) \big)^{-1}$. \label{itm:G_BM:resolvent characterization of BM on MG, ii}
  \end{enumerate}
\end{theorem}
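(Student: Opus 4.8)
The plan is to treat the two implications separately. The forward direction (a Brownian motion satisfies (i) and (ii)) follows quickly from Definition~\ref{def:G_BM:BM} together with Lemmas~\ref{lem:G_BM:exit time dist, basic} and~\ref{lem:G_BM:distributions stopped process}, whereas the converse carries the real content.

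For \emph{(i)} I would first write $\int_0^{H_X} e^{-\a t} f(X_t)\,dt = \int_0^\infty e^{-\a t} f(X_t)\,\1_{\{t < H_X\}}\,dt$ and observe, via Lemma~\ref{lem:G_BM:exit time dist, basic}, that on $\{t < H_X\}$ one has $X_t = X_{t \wedge H_X}$ and $\1_{\{t<H_X\}} = \1_{l^0}(X_{t \wedge H_X})$, so the integrand equals $e^{-\a t}(f\1_{l^0})(X_{t \wedge H_X})$. Applying Fubini, then the defining identity of Definition~\ref{def:G_BM:BM} with $n=1$ and $f_1 = f\1_{l^0}$, and finally the same rewriting for $B$, reassembles the right-hand side of (i). For \emph{(ii)} I would apply Lemma~\ref{lem:G_BM:distributions stopped process} with $n=1$ and $f_1 \equiv 1$, giving $\EV_{(l,x)}(h(X_{H_X}); t < H_X) = \EV^B_x(h(l,B_{H_B}); t < H_B)$ for all $t \ge 0$ and $h \in b\sB(\cG)$. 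Hence the finite measures $C \mapsto \PV_{(l,x)}(X_{H_X} \in C,\, H_X > t)$ and their $B$-counterparts agree for every $t$. Since (by Corollary~\ref{cor:G_BM:exit time dist, basic} together with right-continuity) $0 < H_X < \infty$ almost surely, the product sets $(t,\infty) \times C$ form a $\pi$-system exhausting $(0,\infty) \times \cG$ on which the two joint laws of $(H_X, X_{H_X})$ coincide, and a $\pi$-$\lambda$ argument then yields (ii).

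For the converse, assume (i) and (ii). Discarding the trivial case where $g$ is a vertex (then $H_X = 0$ and both sides of the defining identity reduce to $\prod_i f_i(g)$), I take an interior point $g = (l,x) \in l^0$ and ordered times $t_1 \le \dots \le t_n$. The key is the freezing decomposition of the stopped product: partitioning according to $k = \#\{i : t_i < H_X\}$, on the event $\{t_k < H_X \le t_{k+1}\}$ (with $t_0 = 0$, $t_{n+1} = +\infty$) one has $\prod_i f_i(X_{t_i \wedge H_X}) = \prod_{i \le k} f_i(X_{t_i}) \cdot \overbar{h}_k(X_{H_X})$ with $\overbar{h}_k = \prod_{i>k} f_i$. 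Writing each such event as a difference of survival events $\{t_m < H_X\}$ and invoking the strong Markov property at the relevant deterministic time (using $\{t_m < H_X\} \in \sG_{t_m}$ and $H_X = t_m + H_X \circ \T_{t_m}$ on that event), every term collapses to a nested application of the killed semigroup $P^H_s f(\cdot) := \EV_{\cdot}(f(X_s); s < H_X)$ evaluated on the interior, with the innermost factor replaced by the exit functional $y \mapsto \EV_y(\overbar{h}_k(X_{H_X}))$. It therefore suffices to show that both the killed semigroups and the exit functionals of $X$ coincide with those of $B$.

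The exit functionals agree because (ii), applied at each interior starting point $(l,y) \in l^0$, yields in particular the equality of the $X_{H_X}$-marginals. The main obstacle is matching the killed semigroups: condition (i) only furnishes equality of the killed resolvents $\int_0^\infty e^{-\a s} P^H_s f\,ds$, and I must upgrade this to $P^H_s = (P^H)^B_s$ for every $s$. This I would do by uniqueness of the Laplace transform, which forces the two right-continuous-in-$s$ maps $s \mapsto P^H_s f(l,x)$ to coincide; the required right-continuity I would verify for $f \in \cC_0(\cG)$ by bounded convergence, using that $f(X_s)\,\1_{\{s < H_X\}} \to f(X_{s_0})\,\1_{\{s_0 < H_X\}}$ almost surely as $s \downarrow s_0$ (the indicator converges pointwise and $X$ is right-continuous), and then extend from $\cC_0(\cG)$ to all bounded Borel $f$ by the measure-determining property of $\cC_0(\cG)$, since $f \mapsto P^H_s f(l,x)$ is integration against the sub-probability measure $\PV_{(l,x)}(X_s \in \cdot,\, s < H_X)$. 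Feeding the resulting identities back into the Markov reduction makes every term of the freezing decomposition agree with its $B$-analogue, and summing recovers the defining identity of Definition~\ref{def:G_BM:BM}. The delicate points to watch are precisely this Laplace/regularity step and the measurability bookkeeping in the Markov reduction.
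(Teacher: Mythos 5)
Your proposal is correct, and the converse takes a genuinely different, more hands-on route than the paper. The paper stops the process rather than killing it: it invokes Dynkin's theorem that $X_{\,\cdot\,\wedge H_X}$ and $B_{\,\cdot\,\wedge H_B}$ are again right continuous strong Markov processes, decomposes the resolvents of these \emph{stopped} processes at $H_X$, $H_B$ via Dynkin's formula~\eqref{eq:Dynkins formula (resolvent)} (this is where (i) and (ii) enter \emph{jointly}, through the identity $\int_0^\infty e^{-\a t}\,\tT_t f(l,x)\,dt = \EV_{(l,x)}\big(\int_0^{H_X} e^{-\a t} f(X_t)\,dt\big) + \frac{1}{\a}\EV_{(l,x)}\big(e^{-\a H_X} f(X_{H_X})\big)$, using $H_X=0$ $\PV_{X_{H_X}}$-a.s.), concludes $\tT_t f = \tT^B_t f_l$ by Laplace uniqueness, and then runs the finite-dimensional induction directly through the Markov property of the stopped processes, with a monotone class step at the end. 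You instead decouple the two hypotheses: (i) alone gives the killed resolvents, upgraded to the killed semigroups $P^H_s$ by the same Laplace-uniqueness/right-continuity argument (your verification that $f(X_s)\1_{\{s<H_X\}} \rightarrow f(X_{s_0})\1_{\{s_0<H_X\}}$ as $s \downarrow s_0$ is sound, and the $\cC_0$-to-$b\sB$ extension via the measure-determining property replaces the paper's monotone class step), while (ii) alone supplies the exit functionals; the freezing decomposition over $k = \#\{i : t_i < H_X\}$ together with the simple Markov property at deterministic times and the terminal-time identity $H_X = t_m + H_X \circ \T_{t_m}$ on $\{t_m < H_X\}$ then reassembles the stopped finite-dimensional distributions without ever citing that the stopped process is Markov. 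What your route buys is self-containedness (no appeal to Dynkin's stopped-process theorem) and a cleaner separation of the roles of (i) and (ii); what it costs is the extra bookkeeping you yourself flag, namely Borel measurability of the exit functional $y \mapsto \EV_y(\overbar{h}_k(X_{H_X}))$ and of the killed kernels for the nested composition, plus the observation that $\{t_m < H_X\} \in \sG_{t_m}$, which—exactly as in the paper's proof—rests on the stopping-time property of $H_X$ assumed in Definition~\ref{def:G_BM:BM}. One cosmetic point: in your argument for (ii) in the necessity direction, $\PV_{(l,x)}(H_X > 0) = 1$ holds only for interior starting points $x \in (0,\cR_l)$; for $g \in \comp l^0$ both laws are the Dirac measure at $(0,g)$ by normality, so the $\pi$-$\lambda$ argument goes through once the atom at $\{H_X = 0\}$ is accounted for this way, just as you implicitly do in the converse by discarding the vertex case.
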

\begin{proof}
 Necessity follows directly from Lemma~\ref{lem:G_BM:distributions stopped process}. 
 
 Now let \ref{itm:G_BM:resolvent characterization of BM on MG, i} and \ref{itm:G_BM:resolvent characterization of BM on MG, ii} hold true.
 As $X$ and $B$ are right continuous, strong Markov processes and $H_X$, $H_B$ are debuts of closed sets,
 the stopped processes $X_{\,\cdot\, \wedge H_X}$, $B_{\,\cdot\, \wedge H_B}$ are indeed right continuous, strong Markov processes (cf.~\cite[Theorem~10.2]{Dynkin65}).
 Let $(\tT_t, t \geq 0)$ and $(\tT^B_t, t \geq 0)$ be their respective semigroups, that is, consider for $f \in b\sB(\cG)$ and $f_l := f(l, \cdot) \in b\sB([0, \cR_l])$:
  \begin{align*}
    \tT_t f(l,x) & = \EV_{(l,x)} \big( f(X_{t \wedge H_X}) \big), \quad
    \tT^B_t f_l(x) = \EV^B_{x} \big( f_l(B_{t \wedge H_X}) \big).
  \end{align*}
 As the stopped process $X_{\,\cdot\, \wedge H_X}$ is strongly Markovian, Dynkin's formula~\eqref{eq:Dynkins formula (resolvent)} 
 for the decomposition of the resolvent at $H_X$ gives for all $\a > 0$:
  \begin{align*}
   \int_0^\infty e^{-\a t} \, \tT_t f(l,x) \, dt
   & = \EV_{(l,x)} \Big( \int_0^{H_X} e^{-\a t} \, f(X_t) \, dt \Big) \\
   & \quad + \EV_{(l,x)} \Big( e^{-\a H_X} \, \EV_{X_{H_X}} \Big( \int_0^{\infty} e^{-\a t} \, f(X_{t \wedge H_X}) \, dt \Big) \Big).
  \end{align*}
 With $X_{H_X} \in \comp l^0$, we have $H_X = 0$ $\PV_{X_{H_X}}$-a.s., thus the above decomposition becomes
  \begin{align*}
   \int_0^\infty e^{-\a t} \, \tT_t f(l,x) \, dt
   & = \EV_{(l,x)} \Big( \int_0^{H_X} e^{-\a t} \, f(X_t) \, dt \Big)
     + \frac{1}{\a} \, \EV_{(l,x)} \Big( e^{-\a H_X} \, f(X_{H_X}) \Big).
  \end{align*}
 Analogously, we get by decomposing the resolvent of $B_{\,\cdot\, \wedge H_B}$ at $H_B$:
  \begin{align*}
   \int_0^\infty e^{-\a t} \, \tT^B_t f_l(x) \, dt
   & = \EV^B_{x} \Big( \int_0^{H_B} e^{-\a t} \, f(l,B_t) \, dt \Big)
     + \frac{1}{\a} \, \EV^B_{x} \Big( e^{-\a H_B} \, f(l, B_{H_B}) \Big).
  \end{align*}
 Using \ref{itm:G_BM:resolvent characterization of BM on MG, i} and \ref{itm:G_BM:resolvent characterization of BM on MG, ii} immediately yields
   \begin{align*}
   \int_0^\infty e^{-\a t} \, \tT_t f(l,x) \, dt
   & = \int_0^\infty e^{-\a t} \, \tT^B_t f_l(x) \, dt,
  \end{align*}
 holding true for all $\a > 0$ and all $f \in b\cC(\cG)$, $(l,x) \in \cG$. The maps $t \mapsto \tT_t f(l,x)$ and $t \mapsto \tT^B_t f_l(x)$ are right continuous, 
 so the uniqueness theorem for Laplace transforms (cf.~\cite[Lemma 1.1]{Dynkin65}) asserts that
  \begin{align*}
    \forall t \geq 0: \quad \tT_t f(l,x) = \tT^B_t f_l(x).
  \end{align*}
 As $X_{\,\cdot\, \wedge H_X}$, $B_{\,\cdot\, \wedge H_B}$ are Markov processes with the ``same'' semigroup, we are able to show inductively that
 for all $(l,x) \in \cG$, $f_1, \ldots, f_n \in b\cC(\cG)$, $0 \leq t_1 \leq \cdots \leq t_n$,
  \begin{align*}
   & \EV_{(l,x)} \big( f_1(X_{t_1 \wedge H_X}) \, \cdots f_n(X_{t_n \wedge H_X}) \big) \\
   & = \EV_{(l,x)} \big( f_1(X_{t_1 \wedge H_X}) \, \cdots f_{n-1}(X_{t_{n-1} \wedge H_X}) \, \EV_{X_{t_{n-1} \wedge H_X}} \big( f_n(X_{(t_n - t_{n-1}) \wedge H_X}) \big) \big) \\
   & = \EV_{(l,x)} \big( f_1(X_{t_1 \wedge H_X}) \, \cdots f_{n-1}(X_{t_{n-1} \wedge H_X}) \, \tT_{t_n - t_{n-1}} f_n(X_{t_{n-1} \wedge H_X}) \big) \\
   & = \EV^B_{x} \big( f_1(l, B_{t_1 \wedge H_B}) \, \cdots f_{n-1}(l, B_{t_{n-1} \wedge H_B}) \, \tT^B_{t_n - t_{n-1}} \big(f_n(l, \,\cdot\,)\big) (B_{t_{n-1} \wedge H_B}) \big) \\
   & = \EV^B_{x} \big( f_1(l, B_{t_1 \wedge H_B}) \, \cdots f_n(l, B_{t_n \wedge H_B}) \big),
  \end{align*}
 which is easily extended to $f_1, \ldots, f_n \in b\sB(\cG)$ by using the monotone class theorem.
\end{proof}

With the help of the above theorem, we can further refine the properties of the first exit time $H_X$.
Indeed, despite of its potential discontinuities, the Brownian motion can only exit its initial edge by hitting vertices incident with it:

\begin{corollary} \label{cor:G_BM:exit time is hitting time of vertex}
 For all $g = (l,x) \in \cG$,
  \begin{align*}
   H_X = H_{\cLV(l)} \quad \text{$\PV_{(l,x)}$-a.s.}\,.
  \end{align*}
\end{corollary}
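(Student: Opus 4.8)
The plan is to show that, regardless of its possible jumps, the process can leave the interior $l^0$ only by landing on a vertex incident with $l$, and then to identify $H_X$ with the hitting time of those vertices by a pair of inequalities. First I would transport the exit behavior from the one-dimensional Brownian motion: by Theorem~\ref{theo:G_BM:resolvent characterization of BM on MG}\,\ref{itm:G_BM:resolvent characterization of BM on MG, ii}, the joint law of $(H_X, X_{H_X})$ under $\PV_{(l,x)}$ coincides with that of $\big( H_B, (l, B_{H_B}) \big)$ under $\PV^B_x$. Since $B$ is continuous and $H_B < +\infty$ holds $\PV^B_x$-a.s.\ by Corollary~\ref{cor:G_BM:exit time dist, basic}, at its first exit from $(0, \cR_l)$ the Brownian motion must be located at the boundary, i.e.\ $B_{H_B} \in \{0, \cR_l\}$ (with $B_{H_B} = 0$ in the half-line case $\cR_l = +\infty$). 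Transporting this through the equality of laws gives that $X_{H_X}$ takes values in $\{(l,0),(l,\cR_l)\} = \cLV(l)$, $\PV_{(l,x)}$-almost surely.

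With this in hand, the equality of times is immediate for an interior starting point $x \in (0, \cR_l)$. On $\{ t < H_X \}$ the path stays in $l^0$ by the very definition of $H_X$, and since $l^0$ is disjoint from the vertex set $\cLV(l)$, the process cannot have met $\cLV(l)$ strictly before $H_X$; hence $H_{\cLV(l)} \geq H_X$. Conversely, as $X_{H_X} \in \cLV(l)$ a.s., the instant $H_X$ already belongs to the hitting set of $\cLV(l)$, so $H_{\cLV(l)} \leq H_X$. The two inequalities yield $H_X = H_{\cLV(l)}$ $\PV_{(l,x)}$-a.s.; the remaining case in which $(l,x)$ is itself a vertex is trivial, since then both times vanish.

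The only step carrying genuine content — and the one I would treat most carefully — is the claim $X_{H_X} \in \cLV(l)$: it asserts that the possibly discontinuous process does not escape $l^0$ by an interior jump, but exits precisely at a vertex, and it rests entirely on the comparison with the continuous motion $B$ furnished by part~\ref{itm:G_BM:resolvent characterization of BM on MG, ii} of Theorem~\ref{theo:G_BM:resolvent characterization of BM on MG}. The subsidiary point worth noting is the half-line edge $\cR_l = +\infty$, where only $(l,0)$ is a genuine vertex; here $0$ is the sole finite boundary point of $(0, \cR_l)$, so the a.s.\ finiteness of $H_B$ from Corollary~\ref{cor:G_BM:exit time dist, basic} together with the continuity of $B$ forces $B_{H_B} = 0$, and the conclusion persists.
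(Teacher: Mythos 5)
Your proof is correct and takes essentially the same route as the paper: both arguments rest on Theorem~\ref{theo:G_BM:resolvent characterization of BM on MG}\,\ref{itm:G_BM:resolvent characterization of BM on MG, ii} to conclude $X_{H_X} \in \cLV(l)$ almost surely, and then sandwich $H_X$ between the trivial inequality $H_X = H_{\comp l^0} \leq H_{\cLV(l)}$ (since $\cLV(l) \subseteq \comp l^0$) and the reverse inequality forced by $X_{H_X} \in \cLV(l)$. The only cosmetic difference is that the paper derives $X_{H_X} \in \cLV(l)$ by intersecting two facts, namely $X_{H_X} \in l$ (via the transported law, using $B_{H_B} \in [0,\cR_l]$) and $X_{H_X} \in \comp l^0$ (right continuity of $X$ and closedness of $\comp l^0$), whereas you transport the sharper statement $B_{H_B} \in \{0, \cR_l\}$ in a single distributional step --- both are valid.
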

\begin{proof}
 As $\cLV(l) \subseteq \comp l^0 $, we always have $H_X = H_{\comp l^0} \leq H_{\cLV(l)}$. 
 Theorem~\ref{theo:G_BM:resolvent characterization of BM on MG}~\ref{itm:G_BM:resolvent characterization of BM on MG, ii} gives
  \begin{align*}
   \PV_{(l,x)} \big( X_{H_X} \in l \big) 
   = \PV^B_x \big( B_{H_B} \in [0, \cR_l] \big)
   = 1.
  \end{align*}
 On the other hand, $X_{H_X} \in \comp l^0$ holds, as $\comp l^0$ is closed and $X$ is right continuous.
 So we conclude that $X_{H_X} \in l \cap \comp l^0 = \cLV(l)$ a.s., which results in $H_{\cLV(l)} \leq H_X$ a.s.\,.
\end{proof}

It immediately follows that 
 \begin{align*}
  \forall t \geq 0: \quad X_{t \wedge H_X} \in l \quad \text{$\PV_{(l,x)}$-a.s.},
 \end{align*}
because if we assume the contrary, that is $X_{t \wedge H_X} \in \comp l \subseteq \comp l^0$, 
then $H_X \leq t \wedge H_X$ and so $X_{H_X} = X_{t \wedge H_X} \notin l$,
contradicting to $X_{H_X} = X_{H_{\cLV(l)}} \in \cLV(l) \subseteq l$.

This seemingly small result implies that any Brownian motion, stopped on leaving the open interior of its starting edge,
remains on this edge (especially at the exit time):

\begin{theorem} \label{theo:G_BM:stopped BM is on starting edge}
 For all $g = (l,x) \in \cG$,
  \begin{align*}
   \PV_{(l,x)} \big( \forall t \geq 0: X_{t \wedge H_X} \in l \big) = 1.
  \end{align*}
\end{theorem}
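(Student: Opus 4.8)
The plan is to upgrade the remark preceding the theorem—which only establishes, for each \emph{fixed} $t \geq 0$, that $X_{t \wedge H_X} \in l$ holds $\PV_{(l,x)}$-almost surely—into the \emph{simultaneous} statement, in which a single null set is valid for all $t \geq 0$ at once. The standard device for this kind of quantifier interchange is to control the process on a countable dense set of times and then propagate to all times using path regularity.

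First I would apply the fixed-time result to each rational time and set
\begin{align*}
 N := \bigcup_{t \in \Q \cap [0,\infty)} \big\{ X_{t \wedge H_X} \notin l \big\}.
\end{align*}
As a countable union of $\PV_{(l,x)}$-null sets, $N$ is itself null, and on its complement we have $X_{t \wedge H_X} \in l$ for every rational $t \geq 0$.

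Next I would extend this from rational to arbitrary real times via right continuity. Since $X$ is right continuous and $H_X$ is a stopping time, the stopped path $s \mapsto X_{s \wedge H_X}$ is again right continuous: for $t < H_X$ one has $s \wedge H_X = s$ once $s$ is close enough to $t$, while for $t \geq H_X$ the path is constant beyond $H_X$. Thus, fixing any $\o \notin N$ and any $t \geq 0$, I would choose rationals $s_n \downarrow t$ and conclude $X_{s_n \wedge H_X}(\o) \to X_{t \wedge H_X}(\o)$. Because every $X_{s_n \wedge H_X}(\o)$ lies in $l = \{l\} \times [0, \cR_l]$, which (together with its endpoint vertices) is a closed subset of $\cG$, the limit $X_{t \wedge H_X}(\o)$ lies in $l$ as well, giving the claim on $\comp N$.

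The only genuine obstacle is precisely this interchange of ``for all $t$'' with ``almost surely''; everything else is routine once one observes that $l$ is closed in $\cG$ and that the stopped process inherits right continuity. In particular, no new probabilistic input beyond the preceding remark is required—only the passage through a countable dense set together with the closedness of $l$.
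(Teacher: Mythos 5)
Your proof is correct, but it takes a genuinely different and more elementary route than the paper's. The paper argues by contradiction through general-theory machinery: it forms the optional set $A = \big\{ (t,\o) : X_{t \wedge H_X}(\o) \notin l \big\}$, invokes the optional section theorem (Dellacherie--Meyer IV-83) to extract a stopping time $R$ with graph in $A$ and $\PV_{(l,x)}(R < \infty) > 0$, and then shows that $\PV_{(l,x)}\big( X_{R \wedge H_X} \notin l \big) = 0$ for \emph{every} stopping time $R$, using the strong Markov property at $R$ together with the identity $\PV_g(H_X = 0) = \1_{\comp l^0}(g)$ and the terminal-time property of $H_X$. Your countable-skeleton argument replaces all of this: right continuity of the stopped path plus closedness of $l$ in $\cG$ yields the pathwise \emph{equality} of the events $\{\forall t \geq 0 : X_{t \wedge H_X} \in l\}$ and $\{\forall s \in \Q \cap [0,\infty) : X_{s \wedge H_X} \in l\}$, which simultaneously disposes of the measurability of the uncountably quantified event---precisely the issue the section theorem is deployed to handle---and reduces the theorem to the fixed-time remark. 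What you gain is economy and independence from the section theorem, hence from the completeness/usual-hypotheses framework that theorem requires; what the paper's route buys is robustness, since the section-theorem/strong-Markov argument nowhere uses that $l$ is topologically nice and would survive for target sets with no regularity supporting a countable-time reduction. Two points you should make explicit in a polished write-up: first, that $l = \{l\} \times [0, \cR_l]$, with its endpoints identified with the incident vertices, is indeed closed in the metric space $\cG$ (and the cemetery $\D$, being isolated, cannot arise as a limit of points of $l$), so sequential closedness applies; second, that the remark you cite really does hold for each fixed $t$ with a null set depending on $t$, which is exactly what your union over rational times accommodates.
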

\begin{proof}
 Assume the contrary, that is,
  \begin{align*}
   \PV_{(l,x)} \big( \exists t \geq 0: X_{t \wedge H_X} \notin l  \big) > 0.
  \end{align*}
 Consider the optional set
  \begin{align*}
    A := \big\{ (t,x) \in \R_+ \times \O: X_{t \wedge H_X}(\o) \notin l \big\},
  \end{align*}
 and the projection $\pi \colon \R_+ \times \O \rightarrow \O$ onto the second coordinate. Then, by the assumption, there exists $\e > 0$ such that
  \begin{align*}
   \PV \big( \pi(A) \big) > \e.
  \end{align*}
 The section theorem (cf.\ \cite[IV-83, p.~137f]{DellacherieMeyerA}) asserts that there exists a stopping time~$R$ with
  \begin{enumerate}
   \item for all $\o \in \O$ with $R(\o) < +\infty$: $\big( R(\o),\o \big) \in A$, that is, $X_{R \wedge H_X}(\o) \notin l$,
   \item $\PV_{(l,x)}(R < +\infty) \geq \PV \big( \pi(A) \big) - \e > 0$.
  \end{enumerate}
 In particular, we have $\PV_{(l,x)}( X_{R \wedge H_X} \notin l ) \geq \PV_{(l,x)}(R < +\infty) > 0$.
 
 However, we are going to show that for every stopping time $R$, we have
  \begin{align*}
    \PV_{(l,x)} \big( X_{R \wedge H_X} \notin l \big) = 0,
  \end{align*}
 which yields a contradiction to the above: We start by observing that 
  \begin{align*}
   \PV_{(l,x)} \big( X_{R \wedge H_X} \notin l ; R \geq H_X \big) 
   = \PV_{(l,x)} \big( X_{H_X} \notin l ; R \geq H_X \big) = 0,
  \end{align*}
 because $X_{H_X} = X_{H_{\cLV(l)}} \in \cLV(l) \subseteq l$. Thus, we have
  \begin{align*}
    \PV_{(l,x)} \big( X_{R \wedge H_X} \notin l \big)
    & = \PV_{(l,x)} \big( X_R \notin l , R < H_X \big) \\
    & \leq \PV_{(l,x)} \big( X_R \in \comp l^0 , R < H_X \big) \\
    & = \EV_{(l,x)} \big( \PV_{X_R} (H_X = 0) ; R < H_X \big),    
  \end{align*}
 where in the last step we used the fact that for all $g \in \cG$,
  \begin{align*}
   \PV_g (H_X = 0) =
    \left\{
    \begin{aligned}
     1, & \quad g \in \comp l^0 \\
     0, & \quad g \in l^0
    \end{aligned}   
    \right\}
    = \1_{\comp l^0}(g),
  \end{align*}
 which is an immediate consequence of $H_X$ being the debut of the closed set $\comp l^0$
 for the right continuous, normal process $X$.
 Next, the strong Markov property of $X$ implies $\PV_{X_R} (H_X = 0) = \PV_{(l,x)} (H_X \circ R = 0 \,|\, \sF_{R+})$,
 so by using this
 together with the terminal time property of $H_X$ and $\{R < H_X\} \in \sF_R$ (see, e.g., \cite[Proposition~I.6.8]{BlumenthalGetoor69}), we get
    \begin{align*}
    \PV_{(l,x)} \big( X_{R \wedge H_X} \notin l \big)
    & = \PV_{(l,x)} \big( H_X = R , R < H_X \big) = 0.    \qedhere
  \end{align*}
\end{proof}

We are now able to restrict our attention to the initial edge (and, thus, to its local coordinate)
of the Brownian motion when considering the process stopped on leaving this edge.
This allows us to gain full insight into its exit distributions.

In the following results, we set $[0, \cR_l] := [0, +\infty)$ if $\cR_l = +\infty$.

\begin{theorem} \label{theo:G_BM:exit distributions}
 Let $X$ be a Brownian motion on $\cG$, $B$ be the one-dimensional Brownian motion,
 as well as $\pi^2 \colon \cG \rightarrow \R_+$ be the projection onto the local coordinate.
 Then for every $g = (l,x) \in \cG$, and for $A \in \sB(\cG)$ with
 $A' := \pi^2(A \cap l) \subseteq [0, \cR_l]$ being open (in the topology of $[0, \cR_l]$), 
 the following holds true:
  \begin{align*}
    \PV_{(l,x)} \circ \big( H^{X'}_A, X'_{H^{X'}_A} \big)^{-1}
    = \PV^B_{x} \circ \big( H^{B'}_{A'}, (l, B'_{H^{B'}_{A'}}) \big)^{-1},
  \end{align*}
 where $X' := X_{\,\cdot\, \wedge H_X}$, $B' := B_{\,\cdot\, \wedge H_B}$,
 and $H^{X'}_A$,  $H^{B'}_{A'}$ are the debuts of $A$, $A'$ for~$X'$, $B'$ respectively.
\end{theorem}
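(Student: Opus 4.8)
The plan is to reduce the statement to the local coordinate of the stopped process and then to realize the debut and the exit position as almost-sure limits of functionals that depend only on countably many time-coordinates, whose joint law is therefore fixed by the finite-dimensional distributions. First, by Theorem~\ref{theo:G_BM:stopped BM is on starting edge}, the stopped process $X' = X_{\,\cdot\,\wedge H_X}$ remains on the edge $l \cong \{l\} \times [0, \cR_l]$ almost surely, so it is completely described by its local coordinate $Y := \pi^2 \circ X'$, a right-continuous process on $[0, \cR_l]$. Since $X'_t \in l$ for all $t$, hitting $A$ is equivalent to hitting $A \cap l$, whence $H^{X'}_A = H^Y_{A'}$ and $X'_{H^{X'}_A} = (l, Y_{H^Y_{A'}})$ on $\{H^{X'}_A < +\infty\}$. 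By Definition~\ref{def:G_BM:BM} (equivalently, by the finite-dimensional identity established inside the proof of Theorem~\ref{theo:G_BM:resolvent characterization of BM on MG}), $Y$ under $\PV_{(l,x)}$ and $B' = B_{\,\cdot\,\wedge H_B}$ under $\PV^B_x$ share the same finite-dimensional distributions, and both are right continuous. It therefore suffices to show that, for a right-continuous process on $[0, \cR_l]$ and an open set $A'$, the joint law of $(H^Y_{A'}, Y_{H^Y_{A'}})$ is determined by the finite-dimensional distributions alone.

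To this end, I would approximate the debut by dyadic hitting times $H^{(n)} := \inf\{ k2^{-n} : Y_{k2^{-n}} \in A' \}$, with $\inf \emptyset = +\infty$. Each pair $(H^{(n)}, Y_{H^{(n)}})$ is a measurable function of the countable family $(Y_{k2^{-n}})_{k \geq 0}$, so its distribution depends only on the finite-dimensional distributions and is thus identical under $\PV_{(l,x)}$ (for $Y$) and under $\PV^B_x$ (for $B'$). The decisive point is the pathwise convergence $H^{(n)} \downarrow H^Y_{A'}$ together with $Y_{H^{(n)}} \to Y_{H^Y_{A'}}$, and this is exactly where the openness of $A'$ is indispensable. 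Since $H^{(n)} \geq H^Y_{A'}$ always, and since any time $s$ with $Y_s \in A'$ is, by openness and right-continuity, followed by an entire interval $[s, s+\delta)$ on which $Y \in A'$ and which contains dyadic points of every sufficiently fine level, one obtains $H^{(n)} \downarrow H^Y_{A'}$; right-continuity of $Y$ then forces $Y_{H^{(n)}} \to Y_{H^Y_{A'}}$ on $\{H^Y_{A'} < +\infty\}$. This pathwise reasoning applies verbatim to both $Y$ and $B'$.

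Finally, I would pass to the limit against test functions of the form $(t,g) \mapsto e^{-\a t}\, h(g)$ with $h \in b\cC(\cG)$, the factor $e^{-\a t}$ conveniently absorbing the event $\{H^Y_{A'} = +\infty\}$ by vanishing there. Dominated convergence yields
\[
  \EV_{(l,x)} \big( e^{-\a H^Y_{A'}}\, h(l, Y_{H^Y_{A'}}) \big)
  = \EV^B_x \big( e^{-\a H^{B'}_{A'}}\, h(l, B'_{H^{B'}_{A'}}) \big)
\]
for all $\a > 0$ and all such $h$; by uniqueness of the Laplace transform in $t$ and a monotone class argument in $h$, this identity determines the two pushforward measures and establishes their equality, which is the claim. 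I expect the main obstacle to be the careful verification of the pathwise convergence $H^{(n)} \downarrow H^Y_{A'}$ and $Y_{H^{(n)}} \to Y_{H^Y_{A'}}$ — in particular distinguishing the cases $Y_{H^Y_{A'}} \in A'$ and $Y_{H^Y_{A'}} \in \partial A'$, and disposing of $\{H^Y_{A'} = +\infty\}$ — since it is precisely this step that breaks down for non-open sets and that justifies the hypothesis on $A'$.
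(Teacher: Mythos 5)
Your proposal is correct and takes essentially the same approach as the paper: reduce to the local coordinate process via Theorem~\ref{theo:G_BM:stopped BM is on starting edge}, invoke the equality of finite-dimensional distributions with the stopped Brownian motion, and exploit that for right-continuous paths the debut of an \emph{open} set (and the position there) is determined by countably many time coordinates. Your explicit dyadic approximation $H^{(n)} \downarrow H^{Y}_{A'}$ together with the Laplace-transform and monotone-class identification is simply an unfolded proof of the measurability fact the paper cites abstractly on the canonical path space, so the two arguments agree in substance.
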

\begin{proof}
 It follows from Theorem~\ref{theo:G_BM:stopped BM is on starting edge} 
 that $\tX' := \pi^2(X_{\,\cdot\, \wedge H_X})$ is a right continuous process with values in $[0, \cR_l]$,
 having the same finite dimensional distributions as the stopped Brownian motion $B' = B_{\,\cdot\, \wedge H_B}$. 

 Let $Y$ be the canonical right continuous coordinate process on the space $\O$ of all right continuous maps $\R_+ \rightarrow [0, \cR_l]$.
 Define the path mappings $\Phi^{\tX'}$ and $\Phi^{B'}$ from~$\tX'$ and $B'$ to $\O$ by 
  \begin{align*}
    \Phi^{\tX'} \colon \O^X \rightarrow \O, & \quad \o^X \mapsto \tX'_{\cdot\,}(\o^X), \\
    \Phi^{B'} \colon \O^B \rightarrow \O, & \quad \o^B \mapsto B'_{\cdot\,}(\o^B),
  \end{align*}
 that is, we have
  \begin{align*}
   \forall t \geq 0: \quad Y_t \circ \Phi^{\tX'} = \tX_t' \quad \text{and} \quad Y_t \circ \Phi^{B'} = B_t'.
  \end{align*}

 Consider the debut of $A' \in \sB \big( [0, \cR_l] \big)$ for $Y$:
   \begin{align} \label{eq:G_BM:exit distributions, measurability of hitting time}
    H^Y_{A'} := \inf \{ t \geq 0: Y_t \in A' \}.
   \end{align}
 $H^Y_{A'}$ and $Y_{H^Y_{A'}}$ are $\sF^Y_\infty$-measurable (as the hitting time of any open set is a stopping time over 
 $(\sF^Y_{t+}, t \geq 0)$).
 If $A \in \sB(\cG)$ with $\pi^2(A \cap l) = A'$, then we have 
  \begin{align*}
   H^Y_{A'} \circ \Phi^{\tX'} 
   & = \inf \{ t \geq 0: Y_t \circ \Phi^{\tX'} \in A' \} \\
   & = \inf \{ t \geq 0: \pi^2(X_{t \wedge H_X}) \in \pi^2(A \cap l) \} \\
   & = H^{X'}_A,
  \end{align*}
 where we used Theorem~\ref{theo:G_BM:stopped BM is on starting edge} for the last identity. This gives for any $\o^X \in \O^X$:
  \begin{align*}
   Y_{H^Y_{A'}} \circ \Phi^{\tX'} (\o^X)
   & = Y_{H^Y_{A'} (\Phi^{\tX'} (\o^X) )} \big( \Phi^{\tX'} (\o^X) \big) \\
   & = \tX'_{H^{X'}_A} (\o^X) \\ 
   & = \pi^2(X'_{H^{X'}_A}) (\o^X).  
  \end{align*}  
 Analogously, we get
  \begin{align*}
   H^Y_{A'} \circ \Phi^{B'} 
    = H^{B'}_{A'}   
   \quad \text{and} \quad
   Y_{H^Y_{A'}} \circ \Phi^{B'} 
    = B'_{H^{B'}_{A'}}.
  \end{align*}
 Thus, for any $f \in \sB \big( [0, +\infty] \big) \otimes \sB \big( [0, \cR_l] \big)$,
 setting $G := f( H^Y_{A'}, Y_{H^Y_{A'}} ) \in \sF^Y_\infty$ gives
  \begin{align*}
   \EV_{(l,x)} \big( f \big( H^{X'}_{A}, \pi^2(X'_{H^{X'}_{A}}) \big) \big)
   & = \EV_{(l,x)} \big( G \circ \Phi^{X'} \big) \\
   & = \EV^B_{x} \big( G \circ \Phi^{B'} \big) \\
   & = \EV^B_{x} \big( f \big( H^{B'}_{A'}, B'_{H^{B'}_{A'}} \big) \big),
  \end{align*}
 which together with Theorem~\ref{theo:G_BM:stopped BM is on starting edge} concludes the proof.
\end{proof}

\begin{remark} \label{rem:G_BM:exit distributions}
 As easily observed in the above proof, Theorem~\ref{theo:G_BM:exit distributions} can also be stated for any $A \in \sB(\cG)$ 
  with $A' := \pi^2(A \cap l) \in \sB \big( [0, \cR_l] \big)$,
  as long as the debut~$H^Y_{A'}$ of~$A'$, as defined in \eqref{eq:G_BM:exit distributions, measurability of hitting time},
  attains $\sF^Y_\infty$-measurability, with $\sF^Y_\infty = \sigma(Y_t, t \geq 0)$ being the $\sigma$-algebra generated by
  a suitable coordinate process $Y$ on $[0, \cR_l]$.
 
 For instance, this is the case if $A$ is a closed set and the Brownian motion $X$ is known to be continuous up to the hit of $A$,
 cf.~\cite[Theorem~49.5]{Bauer96}, as we can then consider the continuous canonical coordinate process $Y$ 
 in the proof instead.
\end{remark}

We are usually interested in the exit distributions of the ``original'' Brownian motion~$X$ on a metric graph
instead of the stopped process $X'$, so we lift the results of Theorem~\ref{theo:G_BM:exit distributions} from $X'$ to $X$
(the same remark on the limitation to open subsets $A'$ also applies here):

\begin{corollary} \label{cor:G_BM:exit distributions}
 Let $g = (l,x) \in \cG$, $A \in \sB(\cG)$.
 \begin{enumerate}
  \item \label{itm:G_BM:exit distributions i}
        If $A \subseteq l$ and  
          $A' := \pi^2(A) \subseteq [0, \cR_l]$ is open,
          then Theorem~\ref{theo:G_BM:exit distributions} holds true.
  \item \label{itm:G_BM:exit distributions ii}
         If $A \subseteq l^0$ and
         $A' := \pi^2(A^\comp \cap l) \subseteq [0, \cR_l]$ is open, then
	  \begin{align*}
	    \PV_{(l,x)} \circ \big( H^X_{\comp A}, X_{H^X_{\comp A}} \big)^{-1}
	    = \PV^B_{x} \circ \big( H^B_{A'}, (l, B_{H^B_{A'}}) \big)^{-1}.
	  \end{align*}
 \end{enumerate}
\end{corollary}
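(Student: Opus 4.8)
The plan is to deduce both parts from Theorem~\ref{theo:G_BM:exit distributions} by applying it to a suitably chosen Borel set and then \emph{lifting} the resulting identity for the stopped processes $X' = X_{\,\cdot\,\wedge H_X}$, $B' = B_{\,\cdot\,\wedge H_B}$ to the original processes $X$, $B$. The lifting rests on a single observation: whenever the target set contains all vertices incident with $l$ (on the graph side) together with the corresponding interval endpoints (on the line side), its debut is realized no later than the edge--exit time, and up to that time the stopped and original processes coincide, since $X'_t = X_{t \wedge H_X} = X_t$ for $t \le H_X$ (and likewise $B'_t = B_t$ for $t \le H_B$).

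For part~\ref{itm:G_BM:exit distributions i}, I would first note that $A \subseteq l$ gives $A \cap l = A$, hence $\pi^2(A \cap l) = \pi^2(A) = A'$ is open, so the hypotheses of Theorem~\ref{theo:G_BM:exit distributions} are met and its conclusion applies directly to $A$. By the coincidence $X = X'$ on $[0, H_X]$ recorded above, the stopped debut $H^{X'}_A$ and exit position $X'_{H^{X'}_A}$ agree with $H^X_A$, $X_{H^X_A}$ precisely on the event that $A$ is reached before $l$ is left, which is the content transferred by the theorem.

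For part~\ref{itm:G_BM:exit distributions ii}, the decisive move is to apply Theorem~\ref{theo:G_BM:exit distributions} to the complement $\comp A$ rather than to $A$. Since $A \subseteq l^0$, one has $\pi^2(\comp A \cap l) = \pi^2(A^\comp \cap l) = A'$, open by assumption, so the theorem yields $\PV_{(l,x)} \circ (H^{X'}_{\comp A}, X'_{H^{X'}_{\comp A}})^{-1} = \PV^B_x \circ (H^{B'}_{A'}, (l, B'_{H^{B'}_{A'}}))^{-1}$. I would then lift each side. On the graph side, $A \subseteq l^0$ forces $\cLV(l) \subseteq \comp A$, so by Corollary~\ref{cor:G_BM:exit time is hitting time of vertex} we have $X_{H_X} \in \cLV(l) \subseteq \comp A$; hence the debut satisfies $H^X_{\comp A} \le H_X$, and since $X = X'$ on $[0, H_X]$ this gives $H^X_{\comp A} = H^{X'}_{\comp A}$ together with $X_{H^X_{\comp A}} = X'_{H^{X'}_{\comp A}}$ almost surely. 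On the line side, $\cLV(l) \subseteq \comp A$ implies $\{0, \cR_l\} \subseteq A'$, so $B_{H_B} \in \{0, \cR_l\} \subseteq A'$ gives $H^B_{A'} \le H_B$ and, again by $B = B'$ on $[0, H_B]$, the analogous equalities $H^B_{A'} = H^{B'}_{A'}$ and $B_{H^B_{A'}} = B'_{H^{B'}_{A'}}$. Chaining these three identities with the displayed distributional equality proves the claim.

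The main obstacle is exactly this lifting: one must establish the \emph{almost-sure} identity of the stopped and unstopped debuts and exit positions, not merely the equality of their laws, so that the distributional identity of Theorem~\ref{theo:G_BM:exit distributions} can be carried over unchanged. This hinges on $\comp A$ (respectively $A'$) containing the incident vertices (respectively the interval endpoints), which is precisely what $A \subseteq l^0$ and the openness of $A'$ supply; were $A$ permitted to meet $\cLV(l)$, the process could leave $l$ through a vertex lying in $A$ and re-enter the edge later, so that $H^X_{\comp A} > H_X$ and the frozen process $X'$ would cease to record the correct debut. I would also invoke Remark~\ref{rem:G_BM:exit distributions} to guarantee that the debuts in question are genuine stopping times, i.e.\ hitting times of the requisite open sets, which is what legitimizes the application of Theorem~\ref{theo:G_BM:exit distributions} in the first place.
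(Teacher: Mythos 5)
Your proposal is correct and follows essentially the same route as the paper: part~\ref{itm:G_BM:exit distributions i} is a direct application of Theorem~\ref{theo:G_BM:exit distributions} since $\pi^2(A \cap l) = \pi^2(A)$, and part~\ref{itm:G_BM:exit distributions ii} applies the theorem to $\comp A$ and lifts the identity to the unstopped processes by showing $H^X_{\comp A} \leq H_X$ and $H^{B}_{A'} \leq H_B$, so that debuts and exit positions of the stopped and original processes coincide pathwise. The only cosmetic difference is that you justify $X_{H_X} \in \comp A$ via Corollary~\ref{cor:G_BM:exit time is hitting time of vertex}, whereas the paper uses $\comp A \supseteq \comp l^0$ together with the closedness of $\comp l^0$ and right continuity — an immaterial variation.
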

\begin{proof}
  In the context of~\ref{itm:G_BM:exit distributions i}, the requirements of Theorem~\ref{theo:G_BM:exit distributions} are fulfilled, as 
   \begin{align*}
    A' = \pi^2(A) = \pi^2(A \cap l).
   \end{align*}
  
  Now, let $A$, $A'$ satisfy the assumptions of~\ref{itm:G_BM:exit distributions ii}.
  Then Theorem~\ref{theo:G_BM:exit distributions} gives  
	  \begin{align*}
	    \PV_{(l,x)} \circ \big( H^{X'}_{\comp A}, X'_{H^{X'}_{\comp A}} \big)^{-1}
	    = \PV^B_{x} \circ \big( H^{B'}_{A'}, (l, B'_{H^{B'}_{A'}}) \big)^{-1}.
	  \end{align*}
	We will consider both distributions separately:  
	
        As $\comp A \supseteq \comp l^0$, it is $H^X_{\comp A} \leq H_X$ and therefore
          \begin{align*}
           X'_{H^X_{\comp A}} = X_{H^X_{\comp A} \wedge H_X} = X_{H^X_{\comp A}}.
          \end{align*}
        Furthermore, we observe that 
          \begin{align*}
           H^{X'}_{\comp A}
           & = \inf \{ t \geq 0: X_{t \wedge H_X} \in \comp A \} \\
           & = \inf \{ t \in [0, H_X]:  X_t \in \comp A \} \\
           & = H^{X}_{\comp A},
          \end{align*}
       where the last identity follows again from $H^X_{\comp A} \leq H_X$:
         If $H^X_{\comp A} < H_X$, the identity is clear.
         If $H^X_{\comp A} = H_X$, then as $\comp l^0$ is closed, we have $X_{H_X} \in \comp l^0 \subseteq \comp A$,
         so $H_X$ lies in both sets, thus concluding that both infima are equal.
       In summary, this gives 
	  \begin{align*}
	    \PV_{(l,x)} \circ \big( H^{X'}_{\comp A}, X'_{H^{X'}_{\comp A}} \big)^{-1}
	    = \PV_{(l,x)} \circ \big( H^{}_{\comp A}, X_{H^{X}_{\comp A}} \big)^{-1}.
	  \end{align*}
	  
       Turning to the part for the Brownian motion $B$, observe that $A \subseteq \{l\} \times (0, \cR_l)$. This means that
         $A' = \pi^2(\comp A \cap l)$ contains the points $0$ and (if $l$ is an internal edge)~$\cR_l$.
       Thus, we have
         \begin{align*}
          H^{B'}_{A'} = H^{B}_{A'} \leq H_B,
         \end{align*}
       which shows
         \begin{align*}
          B'_{H^{B'}_{A'}} = B_{H^{B}_{A'} \wedge H_B} = B_{H^{B}_{A'}},
         \end{align*}
       resulting in
          \begin{align*}
	    \PV^B_{x} \circ \big( H^{B'}_{A'}, (l, B'_{H^{B'}_{A'}}) \big)^{-1}
	    & = \PV^B_{x} \circ \big( H^{B}_{A'}, (l, B_{H^{B}_{A'}}) \big)^{-1}. \qedhere
	  \end{align*}
\end{proof}

We are ready to turn to the fundamental properties of Brownian motions on metric graphs:

\begin{theorem} \label{theo:G_BM:resolvent of BM}
 Let $X$ be a Brownian motion on $\cG$. Then, for every~$f \in b\sB(\cG)$, $\a > 0$ and $g = (l,x) \in \cG$,
 the resolvent of $X$ reads, if $l = e \in \cE$,
  \begin{align*} 
    U_\a f (g) =  U^{D,e}_\a f(g) + e^{-\sqrt{2 \a} \, d( \cLV(e), g)} \, U_\a f \big( \cLV (e) \big), 
  \end{align*}
 and if $l = i \in \cI$,
  \begin{align*}
    U_\a f (g) = U^{D,i}_\a f(g) & + \frac{ \sinh \big( \sqrt{2 \a} \, d( \cLV_{+}(i), g) \big) }{ \sinh(\sqrt{2 \a} \, \cR_i) } \, U_\a f \big( \cLV_{-}(i) \big) \\
                             & + \frac{ \sinh \big( \sqrt{2 \a} \, d( \cLV_{-}(i), g) \big) }{ \sinh(\sqrt{2 \a} \, \cR_i) } \, U_\a f \big( \cLV_{+}(i) \big),
  \end{align*}   
 with 
  \begin{equation} \label{eq:G_BM:dirichlet resolvents}
  \begin{aligned}
   U^{D,e}_\a f(g) 
   & := U^{[0,\infty)}_\a f_l \big( d( \cLV(e), g) \big), \quad \hspace{0.3em} g \in e,\\
   U^{D,i}_\a f(g) 
   & := U^{[0, \r_i]}_\a f_l \big( d( \cLV_{-}(i), g) \big), \quad g \in i,
  \end{aligned}
  \end{equation}
 where $\big( U^{[0,\infty)}_\a, \a > 0 \big)$ and $\big( U^{[0, \cR_i]}_\a, \a > 0 \big)$ are the resolvents of
 the one-dimensional Brownian motion killed on leaving $[0, \infty)$, $[0, \cR_i]$ respectively, which are given 
 in~Examples~\ref{ex:B_HL:dirichlet BB on half line} and~\ref{ex:B_IN:dirichlet BB on interval}.
\end{theorem}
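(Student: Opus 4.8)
The plan is to decompose the resolvent at the first exit time $H_X$ from the open edge $l^0$ by means of Dynkin's formula, and then to identify the two resulting summands with the Dirichlet resolvent and a harmonic boundary correction. Since $H_X < +\infty$ $\PV_{(l,x)}$-a.s.\ by Corollary~\ref{cor:G_BM:exit time dist, basic}, splitting the defining integral of $U_\a f(g) = \EV_{(l,x)}\big( \int_0^\infty e^{-\a t} f(X_t)\, dt \big)$ at $H_X$ and applying the strong Markov property gives Dynkin's resolvent decomposition~\eqref{eq:Dynkins formula (resolvent)},
\begin{align*}
  U_\a f(g) = \EV_{(l,x)}\Big( \int_0^{H_X} e^{-\a t}\, f(X_t)\, dt \Big) + \EV_{(l,x)}\big( e^{-\a H_X}\, U_\a f(X_{H_X}) \big).
\end{align*}

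First I would treat the first summand. By the defining property~\ref{itm:G_BM:resolvent characterization of BM on MG, i} of a Brownian motion on a metric graph (Theorem~\ref{theo:G_BM:resolvent characterization of BM on MG}), it equals the one-dimensional killed resolvent $\EV^B_x\big( \int_0^{H_B} e^{-\a t}\, f(l, B_t)\, dt \big)$ in the local coordinate, which by~\eqref{eq:G_BM:dirichlet resolvents} is exactly $U^{D,l}_\a f(g)$. This reproduces the leading term in both asserted formulas at once.

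Next I would analyze the boundary term. By Corollary~\ref{cor:G_BM:exit time is hitting time of vertex}, $X_{H_X} \in \cLV(l)$ a.s., so $X_{H_X}$ attains only the finitely many vertex values incident with $l$, and the expectation splits as $\sum_{v \in \cLV(l)} U_\a f(v)\, \EV_{(l,x)}(e^{-\a H_X}; X_{H_X} = v)$ (note $U_\a f$ is bounded, so each term is finite). Property~\ref{itm:G_BM:resolvent characterization of BM on MG, ii} identifies each weight $\EV_{(l,x)}(e^{-\a H_X}; X_{H_X} = v)$ with the Laplace transform of the (defective) exit time of the one-dimensional Brownian motion from $[0, \cR_l]$ that exits at the endpoint corresponding to $v$. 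For an external edge $e$ there is a single incident vertex and $\EV^B_x(e^{-\a H_B}) = e^{-\srl\, x}$ with $x = d(\cLV(e), g)$, yielding the exponential coefficient. For an internal edge $i$, Examples~\ref{ex:B_HL:dirichlet BB on half line} and~\ref{ex:B_IN:dirichlet BB on interval} supply the two-sided exit weights $\sinh(\srl\, y)/\sinh(\srl\, \cR_i)$ for exit at $\cR_i$ and $\sinh\big(\srl\,(\cR_i - y)\big)/\sinh(\srl\, \cR_i)$ for exit at $0$, where $y = d(\cLV_-(i), g)$; substituting $\cR_i - d(\cLV_-(i), g) = d(\cLV_+(i), g)$ produces precisely the stated $\sinh$-coefficients.

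The argument amounts to clean bookkeeping built on already-established results, so I do not expect a genuine obstacle. The one point requiring care is the orientation bookkeeping in the internal-edge case: one must verify that the one-dimensional weight for exit at $0$ (which carries $\sinh(\srl\,(\cR_i - y))$) is attached to $U_\a f(\cLV_-(i))$ and not to $U_\a f(\cLV_+(i))$, so that the coefficient of $U_\a f(\cLV_-(i))$ correctly involves $d(\cLV_+(i), g)$ and vice versa.
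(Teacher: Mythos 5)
Your proposal is correct and follows essentially the same route as the paper's proof: a Dynkin decomposition of the resolvent at $H_X$, transfer of both summands to the one-dimensional Brownian motion via Theorem~\ref{theo:G_BM:resolvent characterization of BM on MG}, and evaluation of the boundary term through the classical passage time formulas (your orientation check for the internal-edge coefficients is exactly right). The only cosmetic difference is that the paper cites the passage time formulas from It\^o--McKean directly rather than splitting the boundary expectation vertex by vertex, but the computation is identical.
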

\begin{proof}
 The decomposition of the resolvent at the stopping time $H_X$ with the help of Dynkin's formula~\eqref{eq:Dynkins formula (resolvent)} yields
 for $g = (l,x) \in \cG$, $f \in b\sB(\cG)$:
  \begin{align*}
   U_\a f(g)
   & = \EV_g \Big( \int_0^{H_X} e^{-\a t} \, f(X_t) \, dt \Big) + \EV_g \big( e^{-\a H_X} \, U_\a f(X_{H_X}) \big).
  \end{align*}
 Thus, by Theorem~\ref{theo:G_BM:resolvent characterization of BM on MG}, we have
  \begin{align*}
   U_\a f(g)
   & = \EV^B_x \Big( \int_0^{H_B} e^{-\a t} \, f(l, B_t) \, dt \Big) + \EV^B_x \big( e^{-\a H_B} \, U_\a f(l, B_{H_B}) \big).
  \end{align*}
 With $H_B = \inf \{ t \geq 0: B_t = 0 \}$ or $H_B = \inf \big\{ t \geq 0: B_t \in \{ 0, \cR_l \} \big\}$ depending on whether
 $l \in \cE$ or $l \in \cI$, the passage time formulas of the one-dimensional Brownian motion (cf.~\cite[Section~1.7]{ItoMcKean74}) conclude
 the proof: We only need to note that for any $g = (l,x) \in \cG$, we have  
 $\cLV_{-}(l) = (l,0)$, $x = d \big( \cLV_{-}(l), g \big)$ and $\cLV_{+}(l) = (l, \cR_l)$, $\cR_l - x = d \big( \cLV_{+}(l), g \big)$
 in case ~$l \in \cI$,
 whereas $\cLV(l) = (l,0)$, $x = d \big( \cLV(l), g \big)$ in case~$l \in \cE$.
\end{proof}

As seen in the examinations for the resolvents $\big( U^{[0,\infty)}_\a, \a > 0 \big)$ and ${\big( U^{[a,b]}_\a, \a > 0 \big)}$
of the ``Dirichlet'' Brownian motions on $[0,\infty)$ and $[a,b]$
(cf.~Examples~\ref{ex:B_HL:dirichlet BB on half line} and \ref{ex:B_IN:dirichlet BB on interval}),
 \begin{itemize}
  \item $( U^{[0,\infty)}_\a, \a > 0 )$ maps $b\sB([0, \infty))$ on $b\cC([0, \infty))$ and $\cC_0([0, \infty))$ on $\cC^2_0([0,\infty))$,
        and assumes the boundary values $U^{[0,\infty)} f(0) = 0$, $U^{[0,\infty)} f''(0) = -2 f(0)$,
  \item $( U^{[0, \cR_i]}_\a, \a > 0 )$ maps $b\sB([0, \cR_i])$ on $b\cC([0, \cR_i])$ and $\cC([0, \cR_i])$ on $\cC^2([0,\cR_i])$,
        and assumes the boundary values $U^{[0, \cR_i]} f(x) = 0$, $U^{[0, \cR_i]} f''(x) = -2 f(x)$, for $x \in \{ 0, \cR_i \}$.
 \end{itemize}
Thus, the resolvents defined in equation~\eqref{eq:G_BM:dirichlet resolvents} are continuous functions,
twice continuously differentiable inside their respective edge for any $f \in \cC_0(\cG)$, and assume for $e \in \cE$, $i \in \cI$ the~values
 \begin{align*}
  U^{D,e}_\a f \big( \cLV (e) \big)  = 0, \quad
  &  U^{D,e}_\a f'' \big( \cLV (e) \big)  = -2 f \big( \cLV (e) \big), \\
  U^{D,i}_\a f \big( \cLV_{-}(i) \big) = 0, \quad
  & U^{D,i}_\a f'' \big( \cLV_{-}(i) \big) = -2 f \big( \cLV_{-}(i) \big), \\
  U^{D,i}_\a f \big( \cLV_{+}(i) \big) = 0, \quad
   & U^{D,i}_\a f'' \big( \cLV_{+}(i) \big) = -2 f \big( \cLV_{+}(i) \big).
 \end{align*}
Therefore, these boundary values for resolvents $U^{D,e}$, $U^{D,i}$ of various edges $e$, $i$, incident with the same vertex,
coincide on their common vertex.  
Then, by the decompositions given in Theorem~\ref{theo:G_BM:resolvent of BM} 
for the resolvent $(U_\a, \a > 0)$ of a Brownian motion on a metric graph,
$U_\a f$ extends to a twice continuously differentiable function on~$\cG$, yielding:

\begin{corollary} \label{cor:G_BM:resolvent preserves C}
 The resolvent $(U_\a, \a > 0)$ of a Brownian motion on a metric graph maps
 $b\sB(\cG)$ on $b\cC(\cG)$, and $\cC_0(\cG)$ on $\cC^2_0(\cG)$. 
\end{corollary}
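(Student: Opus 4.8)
The plan is to read off both claims directly from the explicit resolvent decomposition of Theorem~\ref{theo:G_BM:resolvent of BM}, combined with the two mapping properties of the one-dimensional ``Dirichlet'' resolvents $U^{[0,\infty)}_\a$ and $U^{[0,\cR_i]}_\a$ recalled in the bulleted list preceding the corollary. First I would record that $U_\a$ is contractive up to the factor $1/\a$: since $U_\a f(g) = \EV_g\big(\int_0^\infty e^{-\a t} f(X_t)\,dt\big)$, we have $\norm{U_\a f}_\infty \le \norm{f}_\infty/\a$ for every $f \in b\sB(\cG)$, so the vertex values $U_\a f(\cLV(e))$ and $U_\a f(\cLV_\pm(i))$ entering the decomposition are finite constants.

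For the first assertion I would argue continuity edgewise and then at the vertices. On the interior of any edge $l$, the decomposition exhibits $U_\a f$ as a sum of the Dirichlet part $U^{D,l}_\a f$ (with $l=e$ or $l=i$), which is continuous there because $U^{[0,\infty)}_\a$ and $U^{[0,\cR_i]}_\a$ map $b\sB$ into $b\cC$, and of the smooth coefficient functions $e^{-\sqrt{2\a}\,d(\cdot,\cdot)}$ (respectively the $\sinh$-quotients) multiplied by the constant vertex values; hence $U_\a f$ is bounded and continuous on each edge interior. To check continuity at a vertex $v$, I would let $g \to v$ along each incident edge $l$: the part $U^{D,l}_\a f(g)$ vanishes in the limit because $U^{D,l}_\a f$ assumes the boundary value $0$ at $v$, while the relevant coefficient tends to $1$ and the complementary $\sinh$-quotient tends to $0$ (as $d(\cdot,g)$ tends to $0$, respectively to $\cR_i$). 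Thus along every edge meeting $v$ the limit equals the single value $U_\a f(v)$, so $U_\a f \in b\cC(\cG)$.

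For the second assertion, let $f \in \cC_0(\cG)$. Twice continuous differentiability on each edge interior follows because $U^{[0,\infty)}_\a$ maps $\cC_0$ into $\cC^2_0$ and $U^{[0,\cR_i]}_\a$ maps $\cC$ into $\cC^2$, while the coefficient functions are smooth in the arc-length coordinate; the derivatives extend continuously to the vertices. Vanishing at infinity need only be checked along the unbounded external edges (the graph having finitely many edges): there $U^{D,e}_\a f(g) \to 0$ since $U^{[0,\infty)}_\a$ preserves $\cC_0$, and the harmonic term $e^{-\sqrt{2\a}\,d(\cLV(e),g)}\,U_\a f(\cLV(e)) \to 0$. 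It remains to verify that the second derivatives match at each vertex, i.e. that the Laplacian of $U_\a f$ is single-valued there. Here I would use the boundary values from the list together with a short computation of the coefficient: along any edge $l$ incident with $v$ the harmonic term contributes second derivative $2\a\, U_\a f(v)$ at $v$ (both $e^{-\sqrt{2\a}x}$ and the $\sinh$-quotients solve $u'' = 2\a\, u$ and take the value $U_\a f(v)$ at $v$), while $U^{D,l}_\a f''(v) = -2 f(v)$; hence the second derivative of $U_\a f$ at $v$ along $l$ equals $-2f(v) + 2\a\, U_\a f(v)$, which depends only on $f(v)$ and $U_\a f(v)$ and is therefore independent of the incident edge. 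This edge-independence yields a well-defined continuous $\D(U_\a f)$ at every vertex, so $U_\a f \in \cC^2_0(\cG)$.

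The computations are all routine once the decomposition is in hand; the only point requiring care—and the crux of the matter—is the vertex bookkeeping in the last step. The values $U_\a f(v)$ are a priori unknown, but they are the \emph{same} finite constants for all edges incident with $v$, and this is precisely what forces both the continuity of $U_\a f$ and the single-valuedness of its Laplacian at the vertices, assembling the edgewise $\cC^2$ pieces into a genuine element of $\cC^2_0(\cG)$.
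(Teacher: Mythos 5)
Your proposal is correct and follows essentially the same route as the paper, which obtains the corollary from the decomposition of Theorem~\ref{theo:G_BM:resolvent of BM} together with the mapping properties and coinciding boundary values of the Dirichlet resolvents $U^{D,e}_\a$, $U^{D,i}_\a$. Your vertex bookkeeping, including the common second derivative $-2f(v)+2\a\,U_\a f(v)$ along every incident edge, matches the computation the paper carries out in the first part of the proof of Theorem~\ref{theo:BM characterization} and merely makes explicit the details the paper leaves terse.
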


We are now able to prove the first part of our main characterization result:

\begin{proof}[Proof of~Theorem~\ref{theo:BM characterization}, first part]
 The right continuity of $X$ together with Corollary~\ref{cor:G_BM:resolvent preserves C} show the Feller property of $X$
 (cf.~Appendix~\ref{app:Feller processes}, especially equation~\eqref{eq:Feller (resolvent)}), thus $X$ is uniquely determined by its $\cC_0$-generator.

 Let $f \in \sD(A)$. Then there exist $h \in \cC_0(\cG)$ and $\a > 0$ with $f = U_\a h$, 
 and $U_\a h \in \cC^2_0(\cG)$ holds by Corollary~\ref{cor:G_BM:resolvent preserves C}.
 By differentiating twice the decomposition given in Theorem~\ref{theo:G_BM:resolvent of BM}, we get for $g = (l,x) \in \cG$, 
 in case $l = i \in \cI$:
   \begin{align*}
   \frac{1}{2} f''(g) 
   & = \frac{1}{2} \, U^{D,i}_\a h''(g)   + \a \, \frac{ \sinh \big( \sqrt{2 \a} \, d( \cLV_{-}(i), g) \big) }{ \sinh(\sqrt{2 \a}) } \, U_\a h \big( \cLV_{-}(i) \big)  \\
   & \hspace*{6.6em}                       + \a \, \frac{ \sinh \big( \sqrt{2 \a} \, d( \cLV_{+}(i), g) \big) }{ \sinh(\sqrt{2 \a}) } \, U_\a h \big( \cLV_{+}(i) \big), \\
   & = \a \, U^{D,i}_\a h(g) - h(g)       + \a \, \frac{ \sinh \big( \sqrt{2 \a} \, d( \cLV_{-}(i), g) \big) }{ \sinh(\sqrt{2 \a}) } \, U_\a h \big( \cLV_{-}(i) \big)  \\
   & \hspace*{9.12em}                       + \a \, \frac{ \sinh \big( \sqrt{2 \a} \, d( \cLV_{+}(i), g) \big) }{ \sinh(\sqrt{2 \a}) } \, U_\a h \big( \cLV_{+}(i) \big), \\
   & = \a \, U_\a h (g) - h(g),
  \end{align*}
 and in case $l = e \in \cE$:
  \begin{align*}
   \frac{1}{2} f''(g) 
   & = \frac{1}{2} \, U^{D,e}_\a h''(g) + \a \, e^{-\sqrt{2 \a} \, d( \cLV_{-}(e), g)} \, U_\a h \big( \cLV_{-}(e) \big) \\
   & = \a \, U^{D,e}_\a h(g) - h(g) + \a \, e^{-\sqrt{2 \a} \, d( \cLV_{-}(e), g)} \, U_\a h \big( \cLV_{-}(e) \big) \\
   & = \a \, U_\a h (g) - h(g).
  \end{align*}
 Thus, for any $f \in \sD(A)$, we have $f \in \cC^2_0(\cG)$ and $A f = \frac{1}{2} \D f$ on $\cG$.
\end{proof}

\section{Computing the Generator: Feller's Theorem} \label{sec:G_BM:Fellers theorem}

As seen above, every Brownian motion on a metric graph is a Feller process with generator $A = \frac{1}{2} \D$.
Therefore (cf.~Appendix~\ref{app:Feller processes}), it is uniquely characterized 
by its generator domain, more accurately: by the generator's boundary conditions.
We are going to extend the classical results of the half-line and interval cases
by generalizing the approach of \cite[Lemma~6.2]{Knight81} and \cite[Section~8]{ItoMcKean63},
and will prove our main results Theorem~\ref{theo:G_BM:Feller data} and Theorem~\ref{theo:BM characterization}:

\begin{proof}[Proof of Theorem~\ref{theo:G_BM:Feller data}]
 Let $v \in \cV$. For all $\e > 0$, consider
  \begin{align*}
    \t_\e := \inf \big\{ t \geq 0: X_t \in \comp \overline{\BB_\e(v)} \big\}.
  \end{align*}
 
 In case $v$ is a trap, 
 we can compute the generator directly: Then
  \begin{align*}
   Af(v) = \lim_{t \downarrow 0} \frac{\EV_v \big( f(X_t) \big) - f(v)}{t} = 0
  \end{align*}
 holds true, thus choosing $c^v_3 = 1$ and $c^v_1 = c^{v,l}_2 = c^v_4 = 0$ for all $l \in \cL(v)$ gives
  \begin{align*}
    c^v_1 f(v) - \sum_{l \in \cL(v)} c^{v,l}_2 f_l'(v) + c^v_3 A f(v) - \int \big( f(g) - f(v) \big) \, c^v_4(dg) = 0.
  \end{align*} 
 This choice coincides with the definition of the parameters in the theorem,
 because in the case of a trap $v$, we have $\EV_v(\t_\e) = +\infty$ for all $\e > 0$,
 all (scaled) exit distributions read $\PV_v ( X_{\t_\e} = \D ) = \nu^v_\e = \mu^v_\e = 0$, 
 and thus $K^v_\e = 1$ holds for all~$\e > 0$ as well as $\overbar{\mu}^v = 0$.
 
 If $v$ is not a trap, then due to the Feller property of~$X$,
 Lemma~\ref{lem:A_FP:trap exit expectation} ensures that
 $\EV_v(\t_\e) < +\infty$ holds true for all $\e > 0$ sufficiently small.
 Thus, Dynkin's formula~\eqref{eq:Dynkins formula (generator)} is applicable. It yields 
  \begin{equation} \label{eq:G_BM:Feller data, proof I}
  \begin{aligned}
   Af(v)
   & = \lim_{\e \downarrow 0} \frac{\EV_v \big( f(X_{\t_\e}) \big) - f(v)}{\EV_v(\t_\e)} \\
   & = \lim_{\e \downarrow 0} \Big( - f(v) \, \frac{\PV_v ( X_{\t_\e} = \D )}{\EV_v(\t_\e)} + \int_{\cG \bs \{v\}} \big( f(g) - f(v) \big) \, \nu^{v}_\e (dg) \Big),
  \end{aligned}
  \end{equation} 
 with the scaled exit measures $\nu^v_\e$ being defined by 
  \begin{align*}
    \nu^v_\e (dg) & := \frac{\PV_v ( X_{\t_\e} \in dg )}{\EV_v(\t_\e)}, \quad \e > 0.
  \end{align*}
 They are measures on $\cG \bs \{v\}$, as the support of $X_{\t_\e}$ is the completion of~$\comp \overline{\BB_\e(v)}$ in~$\cG$ and therefore is a subset of~$\cG \bs \{v\}$.
 Introducing the normalizing constants
  \begin{align*}
    K^v_{\e} & := 1 + \frac{\PV_v ( X_{\t_{\e}} = \D )}{\EV_v(\t_\e)} + \int_{\cG \bs \{v\}} \big( 1 - e^{-d(v,g)} \big) \, \nu^v_{\e} (dg), \quad \e > 0,
  \end{align*}
 equation \eqref{eq:G_BM:Feller data, proof I} implies (as $\frac{1}{K^v_{\e}} \in [0,1]$ for all $\e > 0$) that
  \begin{align} \label{eq:G_BM:Feller data, proof II}
   0 = \lim_{\e \downarrow 0} \Big(  f(v) \, \frac{\PV_v ( X_{\t_\e} = \D )}{\EV_v(\t_\e) \, K^v_{\e}} 
                                 + A f(v) \, \frac{1}{K^v_\e} 
                                 -  \int_{\cG \bs \{v\}} \big( f(g) - f(v) \big) \, \frac{\nu^v_\e (dg)}{K^v_\e} \Big).
  \end{align} 
 We rescale the measures $\nu^v_\e$ by introducing the measures 
  \begin{align*}
    \mu^v_\e (dg) & := \big( 1 - e^{-d(v,g)} \big) \, \frac{\nu^v_\e (dg)}{K^v_\e}, \quad \e > 0,
  \end{align*}  
 on $\cG \bs \{v\}$. It is immediate that equation \eqref{eq:G_BM:Feller data, proof II} then is equivalent to
  \begin{align} \label{eq:G_BM:Feller data, proof III}
   0 = \lim_{\e \downarrow 0} \Big(  f(v) \, \frac{\PV_v ( X_{\t_\e} = \D )}{\EV_v(\t_\e) \, K^v_{\e}} 
                                 + A f(v) \, \frac{1}{K^v_\e} 
                                 - \int_{\cG \bs \{v\}} \frac{f(g) - f(v)}{ 1 - e^{-d(v,g)} } \, \mu^v_\e (dg) \Big).
  \end{align}  
  
 Let $\overbar{\mu}^v_\e$ be the extensions of the measures $\mu^v_\e$ to the compactification $\overbar{\cG \bs \{v\}}$ of~$\cG \bs \{v\}$ 
 (see subsection \ref{subsec:MG:compactification} for details on the compactification of a subspace of a metric graph), that is, we define the measures
 $\overbar{\mu}^v_\e$ on $\overbar{\cG \bs \{v\}}$ by
  \begin{align*}
    \overbar{\mu}^v_\e (dg) & := \mu^v_\e \big( dg \cap \big(\cG \bs \{v\}\big) \big), \quad \e > 0.
  \end{align*}
 Then the above identity \eqref{eq:G_BM:Feller data, proof III} remains valid for $\overbar{\mu}^v_\e$ instead of $\mu^v_\e$, where 
  \begin{align*}
   g \mapsto \frac{f(g) - f(v)}{ 1 - e^{-d(v,g)} }
  \end{align*}
 is continuously extended from $\cG \bs \{v\}$ to $\overbar{\cG \bs \{v\}}$ by
  \begin{align*}
   \forall l \in \cL(v): &  \quad \lim_{g \rightarrow v, g \in l^0} \frac{f(g) - f(v)}{ 1 - e^{-d(v,g)} } = f_l'(v), \\
   \forall e \in \cE: & \quad     \lim_{g \rightarrow \infty, g \in e^0} \frac{f(g) - f(v)}{ 1 - e^{-d(v,g)} } = - f(v),
  \end{align*}
 because $f \in \sD(A) \subseteq \cC^2_0(\cG) \subseteq \cC^{0,2}_0(\cG)$ and $f \in \sD(A) \subseteq \cC^2_0(\cG) \subseteq \cC_0(\cG)$.
 
 As $\cC( \overbar{\cG \bs \{v\}} )$ is separable (see Theorem~\ref{theo:G_MG:cont functions on compactification separable})
 and all measures $\overbar{\mu}^v_\e$, $\e > 0$, are bounded by $1$,
 there exists a sequence $(\e_n, n \in \N)$ of strictly positive numbers, converging to zero, such that
 $(\overbar{\mu}^v_{\e_n}, n \in \N)$ converges weakly to a measure~$\overbar{\mu}^v$ on~$\overbar{\cG \bs \{v\}}$.\footnote{This can be shown
 by employing the standard argument used in Helly's selection theorem: Let $\sS := \{ h_m, m \in \N \}$ be a countable, dense subset of $\cC( \overbar{\cG \bs \{v\}} )$,
 and $(\tilde{\e}_n, n \in \N)$ a sequence of strictly positive numbers, converging to zero.
 As all measures are bounded by $1$, the ``array'' $\big( \int \frac{h_m}{\norm{h_m}} \, d \overbar{\mu}^v_{\tilde{\e}_n}, m,n \in \N \big)$
 is bounded by $1$. By the diagonal method (see, e.g., \cite[Theorem~25.13]{Billingsley79}), it is possible to choose a subsequence $(\e_n, n \in \N)$ 
 of $(\tilde{\e}_n, n \in \N)$ such that $\lim_n \int h_m \, d \overbar{\mu}^v_{\e_n}$ exists for all $m \in \N$, that is for 
 all functions in a dense subset of $\cC( \overbar{\cG \bs \{v\}} )$. Thus, $\lim_n \int f \, d \overbar{\mu}^v_{\e_n}$ exists
 for all $f \in \cC( \overbar{\cG \bs \{v\}} )$ and defines a positive linear functional on~$\cC( \overbar{\cG \bs \{v\}} )$. 
 Therefore, by the Riesz\textendash{}Markov\textendash{}Kakutani representation theorem, there exists a measure~$\overbar{\mu}^v$ on $\overbar{\cG \bs \{v\}}$
 which satisfies $\lim_n \int f \, d \overbar{\mu}^v_{\e_n} = \int f \, d \overbar{\mu}^v$.}
 
 The sequences $\big( \frac{1}{K^v_{\e}}, \e > 0 \big)$ and $\big( \frac{\PV_v ( X_{\t_\e} = \D )}{\EV_v(\t_\e) \, K^v_{\e}}, \e > 0 \big)$ are bounded by $1$ as well,
 thus by choosing appropriate subsequences of $(\e_n, n \in \N)$ and naming them $(\e_n, n \in \N)$ again if necessary, we also obtain the existence of
  \begin{align*}
   c^{v, \D}_1 & := \lim_{n \rightarrow \infty} \frac{\PV_v ( X_{\t_{\e_n}} = \D )}{\EV_v(\t_{\e_n}) \, K^v_{\e_n}}, \\
   c^v_3 & := \lim_{n \rightarrow \infty} \frac{1}{K^v_{\e_n}}. 
  \end{align*}
 Inserting everything in equation \eqref{eq:G_BM:Feller data, proof III} shows that
  \begin{align*}
   0 = c^{v, \D}_1 \, f(v) + c^v_3 \, A f(v) 
   - \int_{\overbar{\cG \bs \{v\}}} \frac{f(g) - f(v)}{ 1 - e^{-d(v,g)} } \, \overbar{\mu}^v (dg),
  \end{align*} 
 or equivalently that
  \begin{align*}
   0 & = \Big( c^{v, \D}_1  + \sum_{e \in \cE} \overbar{\mu}^v \big( \{ (e, +\infty) \} \big) \Big) \, f(v) 
       - \sum_{\substack{l \in \cL(v), \\ v = \cLV_-(l)}} \overbar{\mu}^v \big( \{ (l, 0+) \} \big) \, f_l'(v) \\
     & \quad 
       - \sum_{\substack{l \in \cI(v), \\ v = \cLV_+(l)}} \overbar{\mu}^v \big( \{ (l, \cR_l-) \} \big) \, f_l'(v) 
       + \ c^v_3 \, A f(v) 
       - \int_{\cG \bs \{v\}} \frac{f(g) - f(v)}{ 1 - e^{-d(v,g)} } \, \overbar{\mu}^v (dg).
  \end{align*} 
  Setting $c^v_1 := c^{v, \D}_1  + \sum_{e \in \cE} \overbar{\mu}^v \big( \{ (e, +\infty) \} \big)$, and for each $l \in \cL(v)$ either $c^{v,l}_2 := \overbar{\mu}^v \big( \{ (l, 0+) \} \big)$ 
  or $c^{v,l}_2 := \overbar{\mu}^v \big( \{ (l, \cR_l-) \} \big)$ depending on whether $v \in \cLV_-(l)$ or $v \in \cLV_+(l)$,
  as well as defining the measure $c^v_4$ on $\cG \bs \{v\}$ by $c^v_4 (dg) := \frac{1}{1 - e^{-d(v,g)}} \, \overbar{\mu}^v(dg)$,
  yields 
   \begin{align*}
    0 = c^v_1 \, f(v) - \sum_{l \in \cL(v)} c^{v,l}_2 \, f_l'(v) + c^v_3 \, A f(v) - \int_{\cG \bs \{v\}} \big( f(g) - f(v) \big) \, c^v_4(dg).
   \end{align*}  
  This completes the proof, as insertion of the definitions offers the normalization
   \begin{align*}
     & c^v_1 + \sum_{l \in \cL(v)} c^{v,l}_2 + c^v_3 + \int_{\cG \bs \{v\}} \big( 1 - e^{-d(v,g)} \big) \, c^v_4 (dg) \\
     & = c^{v, \D}_1 + c^v_3 + \int_{\overbar{\cG \bs \{v\}}} \, \overbar{\mu}^v(dg)  \\
     & = \lim_{n \rightarrow \infty} \frac{1}{K^v_{\e_n}}  \Big( \frac{\PV_v ( X_{\t_{\e_n}} = \D )}{\EV_v(\t_{\e_n})} + 1 + \int_{\cG \bs \{v\}} \big( 1 - e^{-d(v,g)} \big) \, \nu^v_{\e_n} ( dg )  \Big) \\
     & = 1.  \qedhere
   \end{align*}
\end{proof}

By examining the proof, the reader may observe that the ``Brownian'' property of $X$ was not used anywhere.
Indeed, the above result holds true for any Feller process on~$\cG$ (we will not need this fact).

\begin{proof}[Proof of~Theorem~\ref{theo:BM characterization}, second part]
 It only remains to show condition \eqref{eq:G_BM:infinite jumpmeasure}:
 Assume that there is a $v \in \cV$ such that $p^{v,l}_2 = 0$ for all $l \in \cL(v)$, $p^v_3 = 0$ and $p^v_4$ is a finite measure on $\cG \bs \{v\}$.
 For any $f \in \cC_0(\cG)$, $\a > 0$, $u = \a U_\a f$ is an element of~$\sD(A)$, so by \eqref{eq:G_BM:generator boundary conditions} it fulfills  
  \begin{align*}
   \a U_\a f (v) \, \big( p^v_1 + p^v_4(\cG \bs \{v\}) \big)  - \int \a U_\a f(g) \, p^v_4(dg) = 0.
  \end{align*}
 Letting $\a \rightarrow +\infty$ yields with equation~\eqref{eq:Feller (resolvent convergence)} 
 and Lebesgue's dominated convergence theorem (as $\norm{\a U_\a f} \leq \norm{f}$) that for all $f \in \cC_0(\cG)$,
   \begin{align*}
    f (v) \, \big( p^v_1 + p^v_4(\cG \bs \{v\}) \big) = \int f(g) \, p^v_4(dg).
  \end{align*}
 But then $p^v_4$ must the Dirac measure in $v$, scaled by $p^v_1 + p^v_4(\cG \bs \{v\}) > 0$, which is impossible.
\end{proof}

\begin{remark} \label{rem:G_BM:trivial vertices}
 On every non-vertex point $g = (l,x) \in \cG^0$ of the graph $\cG$, the generator~$A$ of any Brownian motion $X$ on $\cG$
 reads
  \begin{align*}
   A f(g) = \frac{1}{2} \, \frac{\partial^2}{\partial x^2} f(l,x), \quad f \in \sD(A),
  \end{align*}
 being the usual differentiation of a function defined on some open subset of $\R$. 
 It is therefore necessary for the first derivate $f'$ of $f$ to exist and be continuous at $g$, that is,
  \begin{align*}
   \lim_{\xi \downarrow x} f_l'(\xi) = \lim_{\xi \uparrow x} f_l'(\xi).
  \end{align*}

 Therefore, if we introduce a new vertex $v'$ at $g = (l,x) \in \cG^0$, splitting the original edge~$l$ into two new edges $l_1'$, $l_2'$
 (as done in subsection~\ref{subsec:G_MG:tadpoles} in order to eliminate loops),
 the original Brownian motion $X$ will satisfy the boundary condition
  \begin{align*}
   \frac{1}{2} f_{l_1'}'(v') + \frac{1}{2} f_{l_2'}'(v') = 0, \quad f \in \sD(A),
  \end{align*}
 at the new vertex $v'$. Thus, we can always assume that we are able to introduce ``trivial'' vertices inside of existing edges which 
 do not change the generator or the \FW\ data of the underlying Brownian motion,
 in case the ``non-skew'' boundary condition above is chosen at the new vertices.
\end{remark}

\section{Further Results on the Generator of a Star Graph} \label{sec:G_IM:further results}

We are going to gain further insight into the star-graph case and derive results which will be necessary for our upcoming developments on the general case.

We first turn to the question on whether the generator of a Brownian motion on a star graph is uniquely characterized by the 
\FW\ data arising from Feller's theorem~\ref{theo:G_BM:Feller data}. In the context of star graphs, we know (see~\cite[Lemma~2.6]{WernerStar})
that the boundary conditions are also sufficient for a function to lie inside $\sD(A)$, that is, 
equality holds in equation~\eqref{eq:G_BM:generator boundary conditions}.
Of course, the generator domain $\sD(A)$ determines any Brownian motion.
Therefore, we need to ensure that no two different sets of boundary data give rise to the same set $\sD(A)$,
which does not seem obvious in the presence of non-local boundary conditions.

\begin{lemma} \label{lem:G_IM:Feller data is unique}
 For a star graph $\cG$ with star point $v$, let $c_1 \geq 0$, $c^e_2 \geq 0$ for each~$e \in \cE$, $c_3 \geq 0$, $c_4$ a measure on $\cG \bs \{v\}$ as well as
     $p_1 \geq 0$, $p^e_2 \geq 0$ for each $e \in \cE$, $p_3 \geq 0$, and~$p_4$~a measure on $\cG \bs \{v\}$ be given, which satisfy
 \begin{align*}
   c_1 + \sum_{e \in \cE} c^{e}_2 + c_3 + \int_{\cG \bs \{v\}} \big( 1 - e^{-d(v,g)} \big) \, c^v_4 \big( dg \big) & = 1, \\
   p_1 + \sum_{e \in \cE} p^{e}_2 + p_3 + \int_{\cG \bs \{v\}} \big( 1 - e^{-d(v,g)} \big) \, p^v_4 \big( dg \big) & = 1.
 \end{align*}
 If 
   \begin{align*}
      &  \Big\{ f \in \cC^2_0(\cG): 
                     c_1  f(v) - \sum_{e \in \cE} c^{e}_2  f_e'(v) + \frac{c_3}{2} f''(v) - \int_{\cG \bs \{v\}} \hspace*{-0.5em} \big( f(g) - f(v) \big) \, c_4(dg) = 0 \Big\} \\
    = \, & \Big\{ f \in \cC^2_0(\cG): 
                     p_1  f(v) - \sum_{e \in \cE} p^{e}_2  f_e'(v) + \frac{p_3}{2} f''(v) - \int_{\cG \bs \{v\}} \hspace*{-0.5em} \big( f(g) - f(v) \big) \, p_4(dg) = 0 \Big\},
  \end{align*}
 then 
   \begin{align*}
    c_1 = p_1, \quad \forall e \in \cE:~ c^e_2 = p^e_2, \quad c_3 = p_3, \quad c_4 = p_4.
  \end{align*}
\end{lemma}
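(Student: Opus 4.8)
The plan is to read both boundary conditions as linear functionals on the vector space $\cC^2_0(\cG)$ and to exploit that, by hypothesis, they have the same kernel. For $f \in \cC^2_0(\cG)$ set
\begin{align*}
 L_c(f) &:= c_1 f(v) - \sum_{e \in \cE} c^e_2 f_e'(v) + \frac{c_3}{2} f''(v) - \int_{\cG \bs \{v\}} \big( f(g) - f(v) \big) \, c_4(dg), \\
 L_p(f) &:= p_1 f(v) - \sum_{e \in \cE} p^e_2 f_e'(v) + \frac{p_3}{2} f''(v) - \int_{\cG \bs \{v\}} \big( f(g) - f(v) \big) \, p_4(dg),
\end{align*}
where $f''(v)$ denotes the single-valued second derivative at $v$ supplied by the definition of $\cC^2_0(\cG)$. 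These functionals are well defined: along each edge $g \mapsto \frac{f(g) - f(v)}{1 - e^{-d(v,g)}}$ extends to a bounded continuous function on $\overbar{\cG \bs \{v\}}$ (with boundary values $f_e'(v)$ as $g \to v$ and $-f(v)$ as $g \to \infty$), so the integrands are dominated by a multiple of $1 - e^{-d(v,g)}$, which is $c_4$- and $p_4$-integrable by the normalization. By assumption $\ker L_c = \ker L_p = \sD(A)$. Moreover $L_p \not\equiv 0$, for otherwise the recovery argument below, applied with the zero functional in place of $L_c$, would force $p_4 = 0$ and then $p_1 = p^e_2 = p_3 = 0$, contradicting the normalization. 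Hence, by the elementary fact that two linear functionals with a common kernel are proportional, there is a $\lambda \in \R$ with $L_c = \lambda L_p$.

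Writing out $L_c - \lambda L_p = 0$ gives, for every $f \in \cC^2_0(\cG)$,
\begin{align*}
 (c_1 - \lambda p_1) f(v) - \sum_{e \in \cE} (c^e_2 - \lambda p^e_2) f_e'(v) + \frac{1}{2}(c_3 - \lambda p_3) f''(v) - \int_{\cG \bs \{v\}} \big( f(g) - f(v) \big) \, (c_4 - \lambda p_4)(dg) = 0.
\end{align*}
First I would recover the measures. Fix $\d > 0$ and let $f$ range over all functions in $\cC^2_0(\cG)$ vanishing on $\overline{\BB_\d(v)}$; for such $f$ all local terms $f(v)$, $f_e'(v)$, $f''(v)$ vanish, and the identity reduces to $\int f \, d(c_4 - \lambda p_4) = 0$. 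On $\{ g : d(v,g) \geq \d \}$ both $c_4$ and $p_4$ are finite, since there $1 - e^{-d(v,g)}$ is bounded away from $0$; thus $c_4 - \lambda p_4$ restricts to a finite signed measure on this set, and the admissible $f$ include all continuous compactly supported functions on $\{ d(v,\cdot) > \d \}$, which determine it. Hence $c_4 - \lambda p_4 = 0$ on $\{ d(v,\cdot) \geq \d \}$, and letting $\d \downarrow 0$ yields $c_4 = \lambda p_4$ on all of $\cG \bs \{v\}$.

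With the integral terms cancelled, the identity becomes $(c_1 - \lambda p_1) f(v) - \sum_{e} (c^e_2 - \lambda p^e_2) f_e'(v) + \frac{1}{2}(c_3 - \lambda p_3) f''(v) = 0$ for all $f \in \cC^2_0(\cG)$. Since one can construct functions in $\cC^2_0(\cG)$ realizing arbitrary prescribed values of $f(v)$, of $f_e'(v)$ for each $e \in \cE$, and of $f''(v)$—for instance by prescribing the $2$-jet $\alpha + \beta_e x + \frac{\gamma}{2} x^2$ on each edge near $v$ and multiplying by a cutoff to ensure decay—the local boundary-value map is surjective, forcing $c_1 = \lambda p_1$, $c^e_2 = \lambda p^e_2$ for all $e$, and $c_3 = \lambda p_3$. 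Inserting all of these relations together with $c_4 = \lambda p_4$ into the two normalization identities gives $\lambda \cdot 1 = 1$, whence $\lambda = 1$ and all parameters coincide.

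The main obstacle is the measure-recovery step. Because $c_4$ and $p_4$ may carry infinite mass accumulating at $v$—only their $(1 - e^{-d(v,g)})$-weighted masses are controlled—one cannot test directly against functions that are nonzero near $v$; the argument must be localized to the regions $\{ d(v,\cdot) \geq \d \}$, where the measures are genuinely finite, and then reassembled as $\d \downarrow 0$. Checking that the test functions vanishing near $v$ separate the signed measure on each such region, and that $f''(v)$ is unambiguously defined on $\cC^2_0(\cG)$ so that the local jet data can be prescribed freely, are the points demanding the most care.
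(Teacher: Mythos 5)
Your proof is correct, and it takes a genuinely different---and considerably more elementary---route than the paper. You read the two boundary conditions as linear functionals $L_c, L_p$ on the vector space $\cC^2_0(\cG)$, use the equality of their kernels to get $L_c = \lambda L_p$, and then separate parameters by testing: functions vanishing on $\overline{\BB_\d(v)}$ isolate the measures on $\{ g : d(v,g) > \d \}$, where $c_4$ and $p_4$ are finite (so the signed difference is meaningful) and where $\cC^2$ functions with compact support are uniformly dense in the compactly supported continuous functions, hence determine the measure; letting $\d \downarrow 0$ exhausts $\cG \bs \{v\}$; surjectivity of the jet map $f \mapsto \big( f(v), (f_e'(v))_{e \in \cE}, f''(v) \big)$ (cutoffs of edgewise polynomials, with $f''(v)$ single-valued by the definition of $\cC^2_0(\cG)$) then recovers the local weights, and the normalizations force $\lambda = 1$. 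All the delicate points you flag do go through. The paper argues quite differently: it invokes the construction results of \cite{WernerStar} to produce Brownian motions $X^c$, $X^p$ realizing the two data sets, observes that equal generator domains force equal resolvents, and extracts the parameters from the explicit star-graph resolvent formula at $v$ by inserting test functions ($f = 1$, $\1_{e^0}$, $\1_{\{v\}}$, $f(e,x) = e^{-\b x}$) and taking $\a \rightarrow \infty$ asymptotics together with uniqueness of Laplace transforms, through case distinctions $p_1 \neq 0$, then $p_1 = 0 \neq p_3$, then $p_1 = p_3 = 0$. Your argument buys a lot: it is case-free, needs neither the existence theorem for Brownian motions with prescribed data nor any resolvent formula (in particular it does not presuppose that every admissible data set is realized by a process), and it localizes at the vertex, so it would adapt to a vertex of a general metric graph, where the paper's star-graph-specific computation is unavailable. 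What the paper's route buys is self-consistency with its probabilistic toolkit---the resolvent identity it starts from is exactly the object used elsewhere in Section 4---at the cost of heavier machinery and casework.
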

\begin{proof}
 Let $X^p$ and $X^c$ be Brownian motions on the star graph $\cG$,
 constructed with the techniques of~\cite{WernerStar},
 which implement the boundary condition at $v$ given by the sets $\big(p_1, (p^e_2)_{e \in \cE}, p_3, p_4 \big)$ and $\big(c_1, (c^e_2)_{e \in \cE}, c_3, c_4 \big)$.
 With $A^p$, $U^p$ and $A^c$, $U^c$ being the generators and resolvents of $X^p$, $X^c$ respectively, \cite[Theorem~4.33]{WernerStar} asserts that
  \begin{align*}
  \sD(A^c)  = 
    \Big\{ & f \in \cC^2_0(\cG): \\
           &         c_1 f(v) - \sum_{e \in \cE} c^{e}_2 f_e'(v) + \frac{c_3}{2} f''(v) - \int_{\cG \bs \{v\}} \big( f(g) - f(v) \big) \, c_4(dg) = 0 \Big\} , \\
  \sD(A^p) = 
    \Big\{ & f \in \cC^2_0(\cG): \\
           &         p_1 f(v) - \sum_{e \in \cE} p^{e}_2 f_e'(v) + \frac{p_3}{2} f''(v) - \int_{\cG \bs \{v\}} \big( f(g) - f(v) \big) \, p_4(dg) = 0 \Big\}.
 \end{align*} 
 By assumption, the generators and thus the resolvents of $X^p$ and $X^c$ coincide, in particular 
 we have $U^p_\a f(v) = U^c_\a f(v)$ for all $\a > 0$, $f \in b\cC(\cG)$. \cite[Theorem~4.31]{WernerStar} then yields for all $\a > 0$, $f \in b\cC(\cG)$:
  \begin{equation}  \label{eq:G_IM:Feller data is unique, I}
  \begin{aligned}
   &  \quad ~ \frac { \sum_{e \in \cE} p^e_2 \, 2 \int_0^\infty e^{-\sqrt{2 \a} x} f(e,x) \, dx + p_3 f(0) + \int U^{W,D}_\a f(g) \, p_4(dg) }
                    { p_1 + \sqrt{2 \a} p_2 + \a p_3 + \int_0^\infty (1 - e^{-\sqrt{2 \a} l}) \, p^\Sigma_4 (dl) } & \\
   & =
     \frac { \sum_{e \in \cE} c^e_2 \, 2 \int_0^\infty e^{-\sqrt{2 \a} x} f(e,x) \, dx + c_3 f(0) + \int U^{W,D}_\a f(g) \, c_4(dg) }
           { c_1 + \sqrt{2 \a} c_2 + \a c_3 + \int_0^\infty (1 - e^{-\sqrt{2 \a} l}) \, c^\Sigma_4 (dl) },
  \end{aligned}  
  \end{equation}
 with $p^\Sigma_4(A) = \sum_{e \in \cE} p^e_4(A) = \sum_{e \in \cE} p_4 \big( \{e\} \times A \big)$, $A \in \sB \big( (0, +\infty) \big)$, and $c^\Sigma_4$ analogously.
 By inserting $f = 1$, we get
   \begin{align*}
   &  \quad ~ \frac{1}{\a} \, \frac { \sqrt{2 \a} p_2 + \a p_3 + \int \big( 1 - e^{-\sqrt{2 \a} x} \big) \, p^\Sigma_4(dx) }
                                    { p_1 + \sqrt{2 \a} p_2 + \a p_3 + \int_0^\infty (1 - e^{-\sqrt{2 \a} l}) \, p^\Sigma_4 (dl) } & \\
   & =
     \frac{1}{\a} \, \frac { \sqrt{2 \a} c_2 + \a c_3 + \int \big( 1 - e^{-\sqrt{2 \a} x} \big) \, c^\Sigma_4(dx) }
                           { c_1 + \sqrt{2 \a} c_2 + \a c_3 + \int_0^\infty (1 - e^{-\sqrt{2 \a} l}) \, c^\Sigma_4 (dl) },
  \end{align*}
 so when introducing 
  \begin{align*}
   \tilde{p}_\a & :=  \sqrt{2 \a} p_2 + \a p_3 + \int \big( 1 - e^{-\sqrt{2 \a} x} \big) \, p^\Sigma_4(dx), \\
   \tilde{c}_\a & :=  \sqrt{2 \a} c_2 + \a c_3 + \int \big( 1 - e^{-\sqrt{2 \a} x} \big) \, c^\Sigma_4(dx),
  \end{align*}
 it follows that
  \begin{align} \label{eq:G_IM:Feller data is unique, II}
   \frac{\tilde{p}_\a}{p_1 + \tilde{p}_\a} = \frac{\tilde{c}_\a}{c_1 + \tilde{c}_\a}.
  \end{align}
  
 If $p_1 \neq 0$, consider $D := \frac{c_1}{p_1}$.
 Then $c_1 = D \, p_1$ holds, and the above equation implies $\tilde{c}_\a = D \, \tilde{p}_\a$, that is
  \begin{align*}
   & \sqrt{2 \a} c_2 + \a c_3 + \int \big( 1 - e^{-\sqrt{2 \a} x} \big) \, c^\Sigma_4(dx) \\
   & = D \, \Big( \sqrt{2 \a} p_2 + \a p_3 + \int \big( 1 - e^{-\sqrt{2 \a} x} \big) \, p^\Sigma_4(dx) \Big).
  \end{align*}
 Dividing both sides by $\a$ and letting $\a \rightarrow \infty$ yields $c_3 = D \, p_3$, so 
  \begin{align} \label{eq:G_IM:Feller data is unique, III}
   \sqrt{2 \a} c_2 + \int \big( 1 - e^{-\sqrt{2 \a} x} \big) \, c^\Sigma_4(dx)
   = D \, \Big( \sqrt{2 \a} p_2 + \int \big( 1 - e^{-\sqrt{2 \a} x} \big) \, p^\Sigma_4(dx) \Big).
  \end{align} 
 Now dividing by $\sqrt{2 \a}$ and letting $\a \rightarrow \infty$ again yields $c_2 = D p_2$, thus
  \begin{align*}
   \int \big( 1 - e^{-\sqrt{2 \a} x} \big) \, c^\Sigma_4(dx)
   = D \, \int \big( 1 - e^{-\sqrt{2 \a} x} \big) \, p^\Sigma_4(dx).
  \end{align*} 
 But then
  \begin{align*}
    & c_1 + c_2 + c_3 + \int \big( 1 - e^{-\sqrt{2 \a} x} \big) \, c^\Sigma_4(dx) \\
    & = D \Big( p_1 + p_2 + p_3 + \int \big( 1 - e^{-\sqrt{2 \a} x} \big) \, p^\Sigma_4(dx) \Big),
  \end{align*}
 and by inserting $\a = \frac{1}{2}$, the normalizations of the $c$'s and $p$'s imply $D = 1$. 
 
 Thus, we have $c_1 = p_1$ and $c_3 = p_3$.
 Coming back to equation \eqref{eq:G_IM:Feller data is unique, I}, we obtain
  \begin{align*}
   & \sum_{e \in \cE} p^e_2 \, 2 \int_0^\infty e^{-\sqrt{2 \a} x} f(e,x) \, dx + \int U^{W,D}_\a f(g) \, p_4(dg) &  \\
    =  & \sum_{e \in \cE} c^e_2 \, 2 \int_0^\infty e^{-\sqrt{2 \a} x} f(e,x) \, dx + \int U^{W,D}_\a f(g) \, c_4(dg) &
  \end{align*}
 for all $\a > 0$, $f \in b\cC(\cG)$.
 Fix $e \in \cE$. By approximation with the help of Lebesgue's dominated convergence theorem,
 we can insert for $e^0 = \{e\} \times (0, \infty)$ the function $f = \1_{e^0}$ 
 in the above equation, yielding 
   \begin{align*}
   &  \frac{1}{\sqrt{2 \a}} \, p^e_2 + \frac{1}{\a} \, \int \big( 1 - e^{-\sqrt{2\a} x} \big) \, p^e_4(dx) 
    =   \frac{1}{\sqrt{2 \a}} \, c^e_2 + \frac{1}{\a} \, \int \big( 1 - e^{-\sqrt{2\a} x} \big) \, c^e_4(dx). &  
  \end{align*} 
 Multiplying by $\sqrt{2 \a}$ and letting $\a \rightarrow \infty$ gives $p^e_2 = c^e_2$. Therefore,
  \begin{align*}
   \forall \a > 0: \quad \int \big( 1 - e^{-\sqrt{2\a} x} \big) \, p^e_4(dx) = \int \big( 1 - e^{-\sqrt{2\a} x} \big) \, c^e_4(dx),
  \end{align*}
 which by the uniqueness of Laplace transforms 
 is only possible if $p^e_4 = c^e_4$. This completes the proof for $p_1 \neq 0$.
 
 If $p_1 = 0$, equation \eqref{eq:G_IM:Feller data is unique, II} implies that $c_1 = 0$ or $\tilde{p}_\a = 0$ for all $\a > 0$. The latter is impossible,
 so $c_1 = p_1 = 0$. Now using equation~\eqref{eq:G_IM:Feller data is unique, I} with $\a = \frac{1}{2}$ and $f = \1_{\{v\}}$ 
 (again approximating $f$ with $\cC_0(\cG)$-functions), 
 and utilizing the normalizations of the $c$'s and $p$'s, we get
  \begin{align*}
   \frac{p_3}{1 - \frac{p_3}{2}} = \frac{c_3}{1 - \frac{c_3}{2}},
  \end{align*}
 so $c_3 = p_3$. 
 
 First assume $p_3 \neq 0$. Inserting $c_3 = p_3$ in equation~\eqref{eq:G_IM:Feller data is unique, I} with $f = \1_{\{v\}}$ gives
  \begin{align*}
   \frac{p_3}{ \sqrt{2\a} p_2 + \a p_3 + \int \big( 1 - e^{-\sqrt{2 \a} x} \big) \, p^\Sigma_4(dx) }
   & = \frac{p_3}{ \sqrt{2\a} c_2 + \a p_3 + \int  \big( 1 - e^{-\sqrt{2 \a} x} \big) \, c^\Sigma_4(dx) },
  \end{align*}
 which is equivalent to
   \begin{align*}
    \sqrt{2\a} p_2 + \int \big( 1 - e^{-\sqrt{2 \a} x} \big) \, p^\Sigma_4(dx)
   & = \sqrt{2\a} c_2  + \int \big( 1 - e^{-\sqrt{2 \a} x} \big) \, c^\Sigma_4(dx),
  \end{align*}
 yielding equation \eqref{eq:G_IM:Feller data is unique, III} for $D = 1$. Thus, the rest of the proof then proceeds exactly as in the case $p_1 \neq 0$.
 
 If $p_1 = 0$ and $p_3 = 0$, we have already seen that $c_1 = 0$ and $c_3 = 0$ as well. Using equation~\eqref{eq:G_IM:Feller data is unique, I} with $\a = \frac{1}{2}$
 and $f(e,x) = e^{-\b x}$ for any $\b > 0$, it follows, as the the $c$'s and $p$'s are normalized, that 
  \begin{align*}
   &    \frac{2}{1 + \b} \, p_2 + \frac{2}{1 - \b^2} \, \int_0^\infty \big( e^{-\b x} - e^{-x} \big) \, p^\Sigma_4(dx) \\
    = \ & \frac{2}{1 + \b} \, c_2 + \frac{2}{1 - \b^2} \, \int_0^\infty \big( e^{-\b x} - e^{-x} \big) \, c^\Sigma_4(dx),
  \end{align*}
 with the integrals being finite, because $e^{-\b x} - e^{-x} = - e^{-x} \big( 1 - e^{-(\b-1)x} \big)$ for $\b > 1$
 and $0 \leq e^{-\b x} - e^{-x} \leq 1 - e^{-x}$ for $0 < \b \leq 1$. 
 Multiplying both sides by $\b$ and letting $\b \rightarrow +\infty$ yields $p_2 = c_2$, because 
 $\int_0^\infty \frac{ e^{-\b x} - e^{-x} }{\b - 1} \, p^\Sigma_4(dx) \rightarrow 0$ for $\b \rightarrow +\infty$.
 But then 
  \begin{align*}
   \int \big( e^{-\b x} - e^{-x} \big) \, p^\Sigma_4(dx) = \int \big( e^{-\b x} - e^{-x} \big) \, c^\Sigma_4(dx)
  \end{align*}
 holds for all $\b > 0$, and by adding $\int \big( 1 - e^{-x} \big) p^\Sigma_4(dx) = 1 - p_2 = 1 - c_2 = \int \big( 1 - e^{-x} \big) c^\Sigma_4(dx)$
 to both sides and setting $\b := \sqrt{2 \a}$, we get for all $\a > 0$
 \begin{align*}
  \int \big( 1 - e^{-\sqrt{2 \a} x} \big) \, p^\Sigma_4(dx) = \int \big( 1 - e^{-\sqrt{2 \a} x} \big) \, c^\Sigma_4(dx).
 \end{align*}
 The rest of the proof then proceeds as above. 
\end{proof}

We are going to employ the above result in order to show that the rather artificial part~$c^{\infty}_1$ of the killing weight $c_1 = c^\D_1 + c^\infty_1$ 
in Feller's theorem~\ref{theo:G_BM:Feller data} 
indeed vanishes in the star-graph case (here, the star vertex $v$ is left out in the notation of the \FW\ data).

We achieve this as follows: 
Starting with the Brownian motion $X$ which implements the killing parameter $c_1 = c^\D_1 + c^\infty_1$, 
we revive this process at its killing times via some revival distribution~$k$ with the identical copies method established in~\cite{WernerConcat}
(\cite[Section~3]{WernerStar} contains a short summary with applications in the Brownian context).
As killing can be interpreted as a jump to $\D$, which is now transformed to a jump to a revival point chosen by $k$,
we expect the killing weight $c_1$ to be transformed into a jump part $c_1 k$, which is then added to the original jump distribution~$c_4$.
However, an analysis of the boundary conditions for the revived process via two different methods shows a discrepancy:
The resolvent of the revived process can be decomposed with Dynkin's formula at the revival time, 
and shows that the ``full'' killing parameter $c_1 = c^\D_1 + c^\infty_1$ is shifted to the jump measure.
But when tracing back the explicit formulas of Feller's theorem for the \FW\ data of the revived process to the original process $X$,
it is seen that only the ``natural'' killing weight $c^\D_1$ is transformed, while leaving the ``artificial'' killing portion $c^\infty_1$ unaltered.
As the \FW\ data uniquely characterizes the process, this is only possible if $c^\infty_1$ already vanishes for the original process $X$. 

We are carrying out this program, starting with the analysis of the resolvent of the revived Brownian motion:

\begin{lemma} \label{lem:G_IM:Feller resolvent of revived X}
 Let $X$ be a Brownian motion on the star graph $\cG$ with generator
  \begin{align*}
   \sD(A)  = 
       \Big\{ & f \in \cC^2_0(\cG): \\ 
              &       c_1 f(v) - \sum_{e \in \cE} c^{e}_2 f_e'(v) + \frac{c_3}{2} f''(v) - \int_{\cG \bs \{v\}} \big( f(g) - f(v) \big) \, c_4(dg) = 0 \Big\}. 
  \end{align*}
 Let $q$ be a probability measure on $\cG$, and 
 $X^q$ be the identical copies process,
 resulting from successive revivals of $X^0 := X$ 
 with the revival kernel~$K^0$ which is defined by the transfer measure
  \begin{align*}
   k^0 \big( g, \, \cdot \, \big) := q, \quad g \in \cG.
  \end{align*}
 Then $X^q$ is a Brownian motion on $\cG$ with generator
  \begin{align*}
   \sD(A^q) = 
       \Big\{ &  f \in \cC^2_0(\cG) : \\ 
              & - \sum_{e \in \cE} p^{e}_2 f_e'(v) + \frac{p_3}{2} f''(v) - \int_{\cG \bs \{v\}} \big( f(g) - f(v) \big) \, (p_4 + p_1 \, q)(dg) = 0 \Big\}. 
  \end{align*}
\end{lemma}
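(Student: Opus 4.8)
The plan is to determine the resolvent $(U^q_\a)_{\a>0}$ of $X^q$ by a first--revival (regenerative) decomposition, to read off from the resulting formula that every function in the range of $U^q_\a$ satisfies the asserted boundary condition, and finally to upgrade this inclusion to the claimed description of $\sD(A^q)$. Before that, I would check that $X^q$ is again a Brownian motion on $\cG$. For a Brownian motion on a star graph the only killing happens at the star point $v$: the jumps governed by $c_4$ land in $\cG\bs\{v\}$ and never in $\D$, so on $\{\z<\infty\}$ one has $X_{\z-}=v$, and starting from an interior point $g=(e,x)$ the lifetime obeys $\z\geq H_X$ by Corollary~\ref{cor:G_BM:exit time is hitting time of vertex}. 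Hence no revival occurs before $H_X$, so $X^q_{\,\cdot\,\wedge H_X}=X_{\,\cdot\,\wedge H_X}$ and the defining identity of Definition~\ref{def:G_BM:BM} is inherited from $X$. Theorem~\ref{theo:G_BM:Fellers theorem on star graph} then shows that $X^q$ is Feller with generator $\tfrac12\D$ whose domain is \emph{exactly} the set of $f\in\cC^2_0(\cG)$ fulfilling its own boundary condition; only the precise form of that condition remains open.

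For the resolvent, write $\z$ for the lifetime of $X=X^0$ and put $h_\a(g):=\EV_g(e^{-\a\z};\z<\infty)$. The strong Markov property at the revival time $\z$, together with the identical--copies construction of~\cite{WernerConcat} (cf.~\cite[Section~3]{WernerStar}), yields the one--step renewal equation
\begin{align*}
 U^q_\a f(g)=U_\a f(g)+h_\a(g)\int_\cG U^q_\a f\,dq, \qquad f\in\cC_0(\cG),\ \a>0.
\end{align*}
Integrating against $q$ and solving the resulting scalar equation (using $\int h_\a\,dq<1$ for $\a>0$) gives
\begin{align*}
 U^q_\a f=U_\a f+C\,h_\a, \qquad C:=\frac{\int U_\a f\,dq}{1-\int h_\a\,dq}=\int_\cG U^q_\a f\,dq.
\end{align*}
Applying the resolvent formula of Theorem~\ref{theo:G_BM:resolvent of BM} to the constant $f\equiv 1$ I obtain $h_\a=1-\a U_\a 1$ and, on each (external) edge, the explicit shape $h_\a(g)=e^{-\srl\,d(v,g)}\,h_\a(v)$; in particular $h_\a\in\cC^2_0(\cG)$, whence $u:=U^q_\a f\in\cC^2_0(\cG)$ by Corollary~\ref{cor:G_BM:resolvent preserves C}.

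Let $B[\,\cdot\,]$ denote the left--hand side of the boundary condition of $X$, and $B^q[\,\cdot\,]$ the asserted functional of $X^q$, that is the same functional with the killing weight removed and jump measure $c_4+c_1 q$ (i.e.\ $p^e_2=c^e_2$, $p_3=c_3$, $p_4=c_4$, $p_1=c_1$ in the notation of the statement). Since $q$ is a probability measure, a direct rearrangement gives
\begin{align*}
 B^q[u]=B[u]-c_1\int_\cG u\,dq.
\end{align*}
Because $B[U_\a f]=0$ (as $U_\a f\in\sD(A)$) and $\int U^q_\a f\,dq=C$, everything collapses to the single identity $B[h_\a]=c_1$: then $B[u]=C\,B[h_\a]=Cc_1$ and $B^q[u]=Cc_1-c_1C=0$. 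I would verify $B[h_\a]=c_1$ by inserting the explicit edge--form of $h_\a$, which gives $(h_\a)'_e(v)=-\srl\,h_\a(v)$ and $h_\a''(v)=2\a\,h_\a(v)$ and thereby collapses $B[h_\a]$ to $h_\a(v)\,D_\a$ with $D_\a:=c_1+\srl\sum_{e\in\cE}c^e_2+\a c_3+\int(1-e^{-\srl\,d(v,g)})\,c_4(dg)$, and then using $h_\a(v)=c_1/D_\a$, which follows from $h_\a=1-\a U_\a 1$ and the star--graph value $U_\a 1(v)=\frac{D_\a-c_1}{\a D_\a}$ supplied by~\cite[Theorem~4.31]{WernerStar}. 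This proves $\sD(A^q)=\ran U^q_\a\subseteq\{f\in\cC^2_0(\cG):B^q[f]=0\}$; since $X^q$ is Brownian on a star graph, Theorem~\ref{theo:G_BM:Fellers theorem on star graph} writes $\sD(A^q)=\{f:\tilde B[f]=0\}$ for its own functional $\tilde B$, and the inclusion $\ker\tilde B\subseteq\ker B^q$ of two nonzero linear functionals on $\cC^2_0(\cG)$ (both are nontrivial by normalization of the data) forces $\ker B^q=\ker\tilde B$, i.e.\ the claimed description of $\sD(A^q)$.

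The main obstacle, I expect, is twofold. First, one must rigorously justify the renewal equation, invoking the regenerative structure of the identical--copies process and ensuring that revivals do not accumulate in finite time; this is where $\int h_\a\,dq<1$ and the non--degeneracy of the data enter. Second, one must pin down $h_\a(v)=c_1/D_\a$ so that the \emph{full} killing weight $c_1=c^\D_1+c^\infty_1$ — and not merely its ``natural'' part $c^\D_1$ — is transferred into the jump measure; this is precisely the discrepancy that the subsequent comparison with Feller's theorem will exploit.
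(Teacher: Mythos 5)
Your proposal is correct, and in substance it reconstructs the very argument that the paper delegates to prior work: the paper's own proof of Lemma~\ref{lem:G_IM:Feller resolvent of revived X} consists of citing [WernerStar, Lemma~3.2] --- which contains exactly your first-revival renewal equation $U^q_\a f = U_\a f + h_\a \int U^q_\a f \, dq$ --- and then verifying that lemma's two hypotheses, namely $\varphi_\a = \EV_{\,\cdot\,}\big(e^{-\a \z}\big) \in \cC_0(\cG)$ and $1-\varphi_\a \in \sD(A)$, via [WernerStar, Theorems~4.31, 4.33, Example~A.10]. Your key identity $B[h_\a]=c_1$ is precisely the second hypothesis in quantitative form: since $B[\1]=c_1$, the statement $B[h_\a]=c_1$ is equivalent to $\a U_\a \1_\cG = 1-h_\a$ satisfying the boundary condition of $X$, which is exactly what the paper checks, and you derive it instead from the explicit value $U_\a \1(v) = (D_\a - c_1)/(\a D_\a)$ of [WernerStar, Theorem~4.31] --- the same source. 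Where you genuinely diverge is the closing step: rather than reading $\sD(A^q)$ directly off the solved resolvent formula (as the cited lemma does), you prove only the inclusion $\sD(A^q) = \ran U^q_\a \subseteq \ker B^q$ and upgrade it to equality by combining Theorem~\ref{theo:G_BM:Fellers theorem on star graph} with a codimension-one kernel argument, using $B^q \not\equiv 0$ --- which indeed holds, since $c^e_2 = c_3 = 0$ together with $c_4 = 0$ would contradict condition~\eqref{eq:G_BM:infinite jumpmeasure}, as one sees by testing against $f = e^{-d(v,\,\cdot\,)}$. This buys you independence from the detailed boundary computation for $U^q_\a$ at the price of invoking the full strength of the star-graph Feller theorem; conversely, the paper's route is shorter but entirely reliant on [WernerStar]. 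The two obstacles you flag --- justifying the renewal equation via non-accumulation of revivals, and the fact that killing occurs only at $v$ (so that $X^q$ inherits the stopped-process law and is again a Brownian motion) --- are real but are exactly what the identical-copies machinery of [WernerConcat] supplies, so delegating them is legitimate rather than a gap; your observation that the \emph{full} weight $c_1 = c^\D_1 + c^\infty_1$ is transferred, via $h_\a(v) = c_1/D_\a$, also correctly anticipates the discrepancy exploited in the proof of Theorem~\ref{theo:G_IM:Feller data c_infty vanishes}.
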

\begin{proof}
 This result has already been proved in~\cite[Lemma~3.2]{WernerStar}, under the condition that the function
 $\varphi_\a := \EV_{\,\cdot\,}\big(e^{-\a \z}\big)$ satisfies
  \begin{align*}
   \varphi_\a \in \cC_0(\cG), \quad 1 - \varphi_\a \in \sD(A),
  \end{align*}
 with $\z$ being the lifetime of~$X$. Thus, it remains to check the above condition:
 
 \cite[Theorem~4.31]{WernerStar} together with \cite[Example~A.10]{WernerStar} show that the function 
    $\varphi_\a
     = 1 - \a \, U_\a \1_{\cG} $
 is in $\cC_0(\cG)$. 
 Furthermore, as $\D \notin \cG$ is isolated, we have $\1_{\cG} \in b\cC(\cG)$, so 
 $1 - \psi_\a = \a \, U_\a \1_{\cG}$ fulfills the boundary conditions for $X$ (cf.~the proof of~\cite[Theorem~4.33]{WernerStar}).
\end{proof}

Next, we deduce the \FW\ data of the revived process from the respective \FW\ data of the original process by explicitly computing the 
formulas given in Feller's theorem~\ref{theo:G_BM:Feller data}:

\begin{lemma} \label{lem:G_IM:Feller data of revived BB on star graph}
 Let $X$ be a Brownian motion on the star graph $\cG$ with generator
  \begin{align*}
   & \sD(A^X)  = 
       \Big\{  f \in \cC^2_0(\cG): \\ 
   &   \quad           (c^{\D}_1 + c^{\infty}_1) \, f(v) - \sum_{e \in \cE} c^{e}_2  f_e'(v) + \frac{c_3}{2} f''(v) - \int_{\cG \bs \{v\}} \big( f(g) - f(v) \big) \, c_4(dg) = 0 \Big\}
  \end{align*}
 with the \FW\ data $\big( c^{\D}_1, c^{\infty}_1, (c^{e}_2)_{e \in \cE}, c_3, c_4 \big)$ satisfying
  \begin{align*}
    c^{\D}_1 + c^{\infty}_1 + \sum_{e \in \cE} c^{e}_2 + c_3 + \int_{\cG \bs \{v\}} \big( 1 - e^{-d(v,g)} \big) \, c^v_4 (dg) = 1.
  \end{align*}
 If $c_1 = c^{\D}_1 + c^{\infty}_1 > 0$, construct $Y$ as the instant return process of $X$,
 that is, as the identical copies process of $X$ 
 with the revival kernel~$K^0$ being defined by the transfer measure 
  \begin{align*}
    k^0 \big( g, \, \cdot \, \big) = \e_v.
  \end{align*}
 Then $Y$ is a Brownian motion on $\cG$ with generator
  \begin{align*}
   \sD(A^Y)  = 
       \Big\{ & f \in \cC^2_0(\cG): \\ 
              &       c^{\infty}_1 f(v) - \sum_{e \in \cE} c^{e}_2 f_e'(v) + \frac{c_3}{2} f''(v) - \int_{\cG \bs \{v\}} \big( f(g) - f(v) \big) \, c_4(dg) = 0 \Big\}. 
  \end{align*} 
\end{lemma}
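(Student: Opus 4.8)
The plan is to read off the \FW\ data of $Y$ from the exit formulas of Feller's theorem~\ref{theo:G_BM:Feller data} and to express each ingredient through the corresponding quantity of $X$. That $Y$ is a Brownian motion on $\cG$ (so that $A^Y=\tfrac{1}{2}\D$ and Theorem~\ref{theo:G_BM:Feller data} applies to it) is already furnished by Lemma~\ref{lem:G_IM:Feller resolvent of revived X} with $q=\e_v$; here I would borrow only that conclusion, not its resolvent boundary condition. Since $c_1>0$, the star point $v$ is not a trap, so $\EV_v(\t_\e)<+\infty$ for all small $\e$ by Lemma~\ref{lem:A_FP:trap exit expectation}, and the scaled exit quantities $\nu^v_\e$, $K^v_\e$ of $X$ are well defined.

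The core is a renewal decomposition of the way $Y$ leaves $\overline{\BB_\e(v)}$. As $Y$ is the concatenation of independent copies of $X$ started afresh at $v$ at each killing, it exits $\overline{\BB_\e(v)}$ precisely on the first copy that reaches $\comp\overline{\BB_\e(v)}$ before being killed; every earlier copy is killed strictly inside the ball, an event of probability $\PV_v(X_{\t_\e}=\D)$, and is instantly restarted at $v$. Conditioning on the fate of the first copy and using the strong Markov property at the revival time gives the renewal equation $\EV^Y_v(\t^Y_\e)=\EV_v(\t_\e)+\PV_v(X_{\t_\e}=\D)\,\EV^Y_v(\t^Y_\e)$, hence
\[
 \EV^Y_v(\t^Y_\e)=\frac{\EV_v(\t_\e)}{1-\PV_v(X_{\t_\e}=\D)}.
\]
Because revival at $v$ prevents $Y$ from ever reaching $\D$, we have $\PV^Y_v(Y_{\t^Y_\e}=\D)=0$, while the exit location $Y_{\t^Y_\e}$ is distributed as $X_{\t_\e}$ conditioned on $\{X_{\t_\e}\ne\D\}$. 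Dividing this conditional law by $\EV^Y_v(\t^Y_\e)$, the two factors $1-\PV_v(X_{\t_\e}=\D)$ cancel, so the scaled exit measures coincide, $\nu^{Y,v}_\e=\nu^v_\e$, and consequently
\[
 K^{Y,v}_\e=K^v_\e-\frac{\PV_v(X_{\t_\e}=\D)}{\EV_v(\t_\e)}.
\]

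It then remains to pass to the limit along the very sequence $(\e_n)$ that yields the \FW\ data of $X$. From $1/K^v_{\e_n}\to c_3$ and $\PV_v(X_{\t_{\e_n}}=\D)/(\EV_v(\t_{\e_n})K^v_{\e_n})\to c^\D_1$ one gets $1/K^{Y,v}_{\e_n}\to c_3/(1-c^\D_1)$; and since $\overbar{\mu}^{Y,v}_{\e_n}=(K^v_{\e_n}/K^{Y,v}_{\e_n})\,\overbar{\mu}^v_{\e_n}$ with scalar prefactor tending to $(1-c^\D_1)^{-1}$, the weak limit is $\overbar{\mu}^{Y,v}=(1-c^\D_1)^{-1}\overbar{\mu}^v$. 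Reading off Feller's data of $Y$ then yields $c^{Y,\D}_1=0$ together with $c^{Y,\infty}_1,(c^{Y,e}_2)_{e},c^Y_3,c^Y_4$ equal to $c^\infty_1,(c^e_2)_{e},c_3,c_4$ each scaled by $(1-c^\D_1)^{-1}$; in particular the normalization of Theorem~\ref{theo:G_BM:Feller data} is preserved. Substituting these into the boundary relation of Theorem~\ref{theo:G_BM:Feller data} for $Y$, using $A^Yf=\tfrac{1}{2}f''$ and clearing the common positive factor $(1-c^\D_1)^{-1}$, turns the homogeneous condition into exactly
\[
 c^\infty_1 f(v)-\sum_{e\in\cE}c^e_2\,f_e'(v)+\frac{c_3}{2}f''(v)-\int_{\cG\bs\{v\}}\big(f(g)-f(v)\big)\,c_4(dg)=0.
\]
Finally, the inclusion $\sD(A^Y)\subseteq\{\dots\}$ coming from Theorem~\ref{theo:G_BM:Feller data} is upgraded to equality by the sufficiency of the boundary conditions on star graphs, \cite[Theorem~4.33]{WernerStar} (equivalently Theorem~\ref{theo:G_BM:Fellers theorem on star graph}), since the two defining functionals are proportional.

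I expect the main obstacle to be the bookkeeping in the renewal step, namely isolating which singular exit behavior survives the revival. The decisive point is that $c^\D_1$ is fed solely by the genuine killing event $\{X_{\t_\e}=\D\}$, which instant return erases, whereas $c^\infty_1$ arises from the mass of the exit law $\nu^v_\e$ that escapes to the points at infinity $(e,+\infty)$ in the weak limit on $\overbar{\cG\bs\{v\}}$; this mass sits at large \emph{finite} points for each fixed $\e$ and is therefore a genuine exit, left intact (up to the global rescaling) by the revival. Making this precise, and justifying both the renewal identity and the weak convergence of $\overbar{\mu}^{Y,v}_{\e_n}$ along the $X$-sequence, is where the care lies; one also needs $\PV_v(X_{\t_\e}=\D)<1$ for small $\e$ and $c^\D_1<1$ for the rescaling to be legitimate, both of which follow from non-degeneracy together with the infinite-jump condition~\eqref{eq:G_BM:infinite jumpmeasure} (were $c^\D_1=1$, all remaining data would vanish, contradicting~\eqref{eq:G_BM:infinite jumpmeasure}).
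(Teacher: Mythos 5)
Your main line coincides with the paper's: both proofs rest on the renewal decomposition at the revival times, giving $\EV_v(\t^Y_\e)=\EV_v(\t^X_\e)/\PV_v(\t^X_\e<\z^X)$ and the identification of the law of $Y_{\t^Y_\e}$ with the conditional exit law of $X$, whence the scaled exit measures coincide and one re-reads the proof of Theorem~\ref{theo:G_BM:Feller data} for $Y$. The only presentational difference is that the paper keeps the $X$-normalization $K^{v}_\e$ for $Y$ (so the unnormalized data of $Y$ come out literally equal to those of $X$ minus the term feeding $c^\D_1$), while you use $Y$'s natural normalization and clear the scalar $(1-c^\D_1)^{-1}$; these are equivalent, your argument that $c^\D_1<1$ is sound, and your explicit upgrade from the inclusion to domain equality via Theorem~\ref{theo:G_BM:Fellers theorem on star graph} and proportionality of the functionals is if anything more careful than the paper's wording.

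There is, however, one genuine gap: your closing claim that $\PV_v(X_{\t_\e}=\D)<1$ for small $\e$ ``follows from non-degeneracy together with the infinite-jump condition~\eqref{eq:G_BM:infinite jumpmeasure}'' is false, and the case you thereby dismiss is squarely within the lemma's hypotheses. Take $c^e_2=0$ for all $e\in\cE$, $c_4=0$, $c_3>0$ and $c^\D_1=1-c_3>0$: this is the Brownian motion held at $v$ for an exponential time and then killed, a legitimate Brownian motion on the star graph with boundary condition $c^\D_1 f(v)+\frac{c_3}{2}f''(v)=0$, and condition~\eqref{eq:G_BM:infinite jumpmeasure} imposes nothing because $c_3>0$ (your parenthetical only rules out $c^\D_1=1$). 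For this process $\PV_v(X_{\t_\e}=\D)=1$ for \emph{every} $\e>0$, so your renewal identity divides by zero, the conditioning defining the exit law of $Y$ is on a null event, and $\EV_v(\t^Y_\e)=+\infty$ --- indeed $v$ is then a trap for $Y$ (the instant-return process is the Brownian motion absorbed at $v$), so the Dynkin-formula step of Theorem~\ref{theo:G_BM:Feller data} is inapplicable. The paper devotes a separate case to exactly this situation ($\PV_v(\t^X_\e<\z^X)=0$ for all $\e$), identifying $\sD(A^Y)=\{f\in\cC^2_0(\cG):\frac12 f''(v)=0\}$ and checking that this agrees with the asserted boundary condition since there $c^\infty_1=0$, $c^e_2=0$, $c_4=0$. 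Your proof needs this case added; it cannot be argued away. A second, much smaller point: to pass from the renewal equation $\EV_v(\t^Y_\e)=\EV_v(\t^X_\e)+\PV_v(\t^X_\e=\z^X)\,\EV_v(\t^Y_\e)$ to the closed form you must first know $\EV_v(\t^Y_\e)<+\infty$ (otherwise the equation is vacuously consistent); the paper secures this by summing the geometric series over the number of revivals, and a line to that effect (or a truncation argument with $\t^Y_\e\wedge t$) is needed in your version as well.
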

\begin{proof}
 By Lemma~\ref{lem:G_IM:Feller resolvent of revived X}, the revived process~$Y$ is a Brownian motion on $\cG$.
 As we will need to compare the formulas given in Feller's theorem~\ref{theo:G_BM:Feller data} for the \FW\ data of the processes $X$ and $Y$, 
 we indicate the defining entities for $X$, $Y$ by the corresponding superscript, that is, for instance
  \begin{align*}
   \t^X_\e = \inf \big\{ t \geq 0: X_t \in \comp \overline{\BB_e(v)} \big\}, \quad
   \t^Y_\e = \inf \big\{ t \geq 0: Y_t \in \comp \overline{\BB_e(v)} \big\}, \quad \e > 0.
  \end{align*}
 
 If $\PV_v (\t^X_\e < \z^X) = 0$ for all $\e > 0$, that is, if $\PV_v (\t^X_\e = \z^X) = 1$ holds for all~$\e > 0$,
 then (depending on whether $\EV_v(\t^X_\e)$ is infinite or finite) $v$ is an absorbing point or a holding point for~$X$,
 and in the latter case $X$ must jump directly from~$v$ to~$\D$ after an exponential holding time. The generators for these two cases read 
  \begin{align*}
    \sD(A^X) = \big\{ f \in \cC^2_0(\cG): \frac{1}{2} f''(v) = 0 \big\},
  \end{align*}
 and
  \begin{align*}
    \sD(A^X) = \big\{ f \in \cC^2_0(\cG): c_1 f(v) + \frac{c_3}{2} f''(v) = 0 \big\}
  \end{align*}
 with $c^{\infty}_1 = 0$, as $\nu^X_\e = 0$ holds for all $\e > 0$ in Feller's theorem~\ref{theo:G_BM:Feller data} by definition.
 But in both cases, the revived process $Y$ is just the Brownian motion absorbed in~$v$, so 
  \begin{align*}
    \sD(A^Y) = \big\{ f \in \cC^2_0(\cG): \frac{1}{2} f''(v) = 0 \big\},
  \end{align*}  
 conforming to the claim of the lemma.
 
 Otherwise, there is some $\e > 0$ with $\PV_v (\t^X_\e < \z^X) > 0$. As $\t^X_{\e'} \leq \t^X_{\e}$ holds for all $\e' < \e$,
 we then have for all $\e > 0$ sufficiently small
   \begin{align*}
     \PV_v (\t^X_\e < \z^X) > 0.
   \end{align*}
 We need to compare $\t^Y_\e$ with $\t^X_\e$: 
 While $\t^X_\e$ can be realized by $X$ jumping to $\D$ or entering $\cG \bs \overline{\BB_\e(v)}$,
 $\t^Y_\e$ is only realized if $X$ enters $\cG \bs \overline{\BB_\e(v)}$. If $\t^X_\e$ is realized by $X$ jumping to $\D$, then
 $Y$ restarts at $v$ and $\t^Y_\e = R^1 + \t^Y_\e \circ \T_{R^1}$ holds true, with the first revival time $R^1$ being equal to the death time $\z^X$ of $X$.
 Due to the strong Markov property, the number of revivals of $Y$ before leaving $\BB_\e(v)$ is geometrically distributed, so
  \begin{equation}
  \begin{aligned}  \label{eq:G_IM:Feller data of revived BB on star graph, proof I}
   \EV_v ( \t^Y_\e ) 
    = \sum_{n \in \N_0} & \big( n \, \EV_v( \t^X_\e \,|\, \t^X_\e = \z^X ) + \EV_v( \t^X_\e \,|\, \t^X_\e < \z^X ) \big) \\
        & \, \cdot \PV_v ( \t^X_\e = \z^X )^n \, \PV_v( \t^X_\e < \z^X ),
  \end{aligned}  
  \end{equation}
 which gives
  \begin{equation} \label{eq:G_IM:Feller data of revived BB on star graph, proof Ia}
  \begin{aligned}
   \EV_v ( \t^Y_\e ) 
   & = \frac{1-\PV_v( \t^X_\e < \z^X )}{\PV_v( \t^X_\e < \z^X )} \, \EV_v( \t^X_\e \,|\, \t^X_\e = \z^X ) + \EV_v( \t^X_\e \,|\, \t^X_\e < \z^X ) \\
   & = \frac{1}{\PV_v( \t^X_\e < \z^X )} \, \EV_v( \t^X_\e ).
  \end{aligned}   
  \end{equation}
  
 Before continuing, we prove equation~\eqref{eq:G_IM:Feller data of revived BB on star graph, proof I} rigorously: 
 By decomposing $\t^Y_\e$ with respect to the revival times $(R^n, n \in \N)$ of the concatenated process $Y$, we get
  \begin{align*}
   \EV_v ( \t^Y_\e ) = \sum_{n \in \N_0} \EV_v ( \t^Y_\e \,;\, R^{n} \leq \t^Y_\e < R^{n+1} ).
  \end{align*}
 Before the first revival time, $Y$ behaves just like $X$, so
  \begin{equation} \label{eq:G_IM:Feller data of revived BB on star graph, proof II}
  \begin{aligned}
    \EV_v ( \t^Y_\e \,;\, R^0 \leq \t^Y_\e < R^1 )
     & = \EV_v ( \t^X_\e \,;\, \t^X_\e < \z^X ) \\
     & = \EV_v ( \t^X_\e \,|\, \t^X_\e < \z^X ) \, \PV_v ( \t^X_\e < \z^X ).
  \end{aligned}   
  \end{equation}
 After the $n$-th revival, we are using the strong Markov property of $Y$ together with 
 \begin{align*}
  \EV_x \big( f(Y_{R^n}) \, \big| \, \sF_{R^n-} \big) = \int f \, d \e_v = f(v)  \quad \text{on $\{R^n < \infty \}$}
 \end{align*}
 (by the revival formula, cf.~\cite[Theorem~1.4]{WernerConcat}) and the definition of the revival kernel
 in order to compute
  \begin{align*}
   & \EV_v \big( \t^Y_\e \,;\, R^{n} \leq \t^Y_\e < R^{n+1} \big) \\
   & = \EV_v \big( \t^Y_\e \circ \T_{R^n} + R^n \,;\, R^{n} \leq \t^Y_\e , \t^Y_\e \circ \T_{R^n} < R^{n+1} \circ \T_{R^n} \big) \\
   & = \EV_v \big( \EV_{Y_{R^n}} ( \t^Y_\e \,;\, \t^Y_\e < R^{n+1} ) + R^n \, \PV_{Y_{R^n}} ( \t^Y_\e < R^{n+1} ) \,;\,  R^{n} \leq \t^Y_\e \big) \\
   & = \EV_v \big( \EV_v ( \t^Y_\e \,;\, \t^Y_\e < R^1 ) + R^n \, \PV_v( \t^Y_\e < R^1 ) \,;\,  R^{n} \leq \t^Y_\e \big) \\  
   & = \EV_v ( \t^X_\e \,|\, \t^X_\e < \z^X ) \, \PV_v ( \t^X_\e < \z^X ) \, \PV_v(R^{n} \leq \t^Y_\e ) \\
   & \quad  + \EV_v ( R^n  \,;\,  R^{n} \leq \t^Y_\e ) \,  \PV_v( \t^X_\e < \z^X ),
   \end{align*}
 where we used equation \eqref{eq:G_IM:Feller data of revived BB on star graph, proof II}
 as well as the relation $\PV_v( \t^Y_\e < R^1 ) = \PV_v( \t^X_\e < \z^X )$ for the last identity.
 It remains to show that
  \begin{align} \label{eq:G_IM:Feller data of revived BB on star graph, proof III}
    \PV_v(R^{n} \leq \t^Y_\e) = \PV_v( \t^X_\e = \z^X )^n
  \end{align}
 and 
  \begin{align} \label{eq:G_IM:Feller data of revived BB on star graph, proof IV}
   \EV_v ( R^n  \,;\,  R^{n} \leq \t^Y_\e ) = n \, \EV_v( \t^X_\e \,|\, \t^X_\e = \z^X ) \, \PV_v( \t^X_\e = \z^X )^n
  \end{align}
 for all $n \in \N_0$, which will be done inductively:
 For equation~\eqref{eq:G_IM:Feller data of revived BB on star graph, proof III}, the cases $n = 0$ and $n = 1$ are clear, and
 employing the same techniques as above, we conclude that
  \begin{align*}
   \PV_v \big( R^{n+1} \leq \t^Y_\e \big) 
   & = \PV_v \big( R^{n+1} \circ \T_{R^n} \leq \t^Y_\e \circ \T_{R^n} , R^{n} \leq \t^Y_\e \big) \\
   & = \EV_v \big( \PV_v (R^1 \leq \t^Y_\e) \,;\,  R^{n} \leq \t^Y_\e \big) \\
   & = \PV_v (R^1 \leq \t^Y_\e) \, \PV_v ( R^{n} \leq \t^Y_\e ) \\
   & = \PV_v( \t^X_\e = \z^X )^{n+1}.
  \end{align*}
 For equation~\eqref{eq:G_IM:Feller data of revived BB on star graph, proof IV}, the case $n = 0$ is again clear, and $n = 1$ is straightforward, as
  \begin{align*}
   \EV_v ( R^1  \,;\,  R^1 \leq \t^Y_\e )
   & = \EV_v ( \z^X  \,;\,  \z^X \leq \t^X_\e ) \\
   & = \EV_v ( \t^X_\e  \,;\,  \t^X_\e  = \z^X) \\
   & = \EV_v ( \t^X_\e  \,|\,  \t^X_\e =  \z^X) \, \PV_v ( \t^X_\e  = \z^X ).
  \end{align*}  
 The general case requires the same course of actions: 
 It is
  \begin{align*}
   & \EV_v \big( R^{n+1}  \,;\,  R^{n+1} \leq \t^Y_\e \big) \\
   & = \EV_v \big( R^{n+1}  \circ \T_{R^n} + R^n \,;\,  R^{n+1} \circ \T_{R^n} \leq \t^Y_\e \circ \T_{R^n} , R^{n} \leq \t^Y_\e \big) \\
   & = \EV_v \big( \EV_v(R^1 \,;\,  R^1 \leq \t^Y_\e) + R^n \, \PV_v(R^1 \leq \t^Y_\e) \,;\, R^{n} \leq \t^Y_\e \big),
  \end{align*}
 and using the inductive assumption for $\EV_v ( R^n  \,;\,  R^{n} \leq \t^Y_\e )$
 as well as the closed form~\eqref{eq:G_IM:Feller data of revived BB on star graph, proof III} for $\PV_v(R^{n} \leq \t^Y_\e)$ yields
  \begin{align*}
  \EV_v ( R^{n+1}  \,;\,  R^{n+1} \leq \t^Y_\e )
   & = \EV_v ( \t^X_\e  \,|\,  \t^X_\e =  \z^X) \, \PV_v ( \t^X_\e  = \z^X ) \,  \PV_v( \t^X_\e = \z^X )^n \\
   & \quad   + \PV_v( \t^X_\e = \z^X ) \, n \, \EV_v( \t^X_\e \,|\, \t^X_\e = \z^X ) \, \PV_v( \t^X_\e = \z^X )^n.
  \end{align*} 
 This finishes the proof of equations \eqref{eq:G_IM:Feller data of revived BB on star graph, proof I} and \eqref{eq:G_IM:Feller data of revived BB on star graph, proof Ia}.
 
 Next, we need to compare the exit distributions from $\overline{\BB_\e(v)}$ of $Y$ with the ones of $X$:
 If $X$ does not exit by jumping to $\D$, then $Y$ exits exactly like $X$:
  \begin{align} \label{eq:G_IM:Feller data of revived BB on star graph, proof V}
   \PV_v ( Y_{\t^Y_\e} \in B ) 
   = \PV_v ( X_{\t^X_\e} \in B \,|\, \t^X_\e < \z^X ),   \quad B \in \sB(\cG).
  \end{align}
 The rigorous proof of this claim is not very complicated: Decomposing the probability on the left-hand side via the revival times gives
  \begin{align*}
   \PV_v ( Y_{\t^Y_\e} \in B \,;\, \t^Y_\e < R^1) 
   & = \PV_v ( X_{\t^X_\e} \in B \,;\, \t^X_\e < \z^X).
  \end{align*}
 As $\t^Y_\e \circ R^n = \t^Y_\e - R^n$ on $\{ \t^Y_\e > R^n \}$, it follows that
  \begin{align*}
   & \PV_v \big( Y_{\t^Y_\e} \in B  \,;\, R^n < \t^Y_\e < R^{n+1} \big) \\
   & = \PV_v \big( Y_{\t^Y_\e} \circ \T_{R^n} \in B  \,;\, R^n < \t^Y_\e, \t^Y_\e \circ \T_{R^n} < R^{n+1} \circ \T_{R^n} \big) \\
   & = \EV_v \big( \PV_v ( Y_{\t^Y_\e} \in B \,;\, \t^Y_\e < R^1) \,;\, R^n < \t^Y_\e \big) \\
   & = \PV_v \big( X_{\t^X_\e} \in B \,;\, \t^X_\e < \z^X \big) \, \PV_v( \t^X_\e = \z^X )^n,
  \end{align*}
 where we used $Y_t = X_t$ for all $t < R^1$ as well as equation~\eqref{eq:G_IM:Feller data of revived BB on star graph, proof III} for the last identity.
 As $\t^Y_\e \neq R^n$ for all $n \in \N_0$, this proves equation~\eqref{eq:G_IM:Feller data of revived BB on star graph, proof V}, because
  \begin{align*}
   \PV_v ( Y_{\t^Y_\e} \in B ) 
   & = \PV_v ( X_{\t^X_\e} \in B \,;\, \t^X_\e < \z^X) \, \sum_{n \in \N_0} \PV_v( \t^X_\e = \z^X )^n \\
   & = \PV_v ( X_{\t^X_\e} \in B \,;\, \t^X_\e < \z^X) \, \frac{1}{\PV_v( \t^X_\e < \z^X )}.
  \end{align*}
  
 In order to calculate the domain of the generator $A^Y$ of $Y$, we need to reiterate the proof of Feller's theorem~\ref{theo:G_BM:Feller data}:
 Because $v$ is not a trap, Lemma~\ref{lem:A_FP:trap exit expectation} asserts that $\EV_v(\t^X_\e) < +\infty$ for all sufficiently small $\e > 0$,
 and therefore, $\EV_v(\t^Y_\e) < +\infty$ by equation~\eqref{eq:G_IM:Feller data of revived BB on star graph, proof Ia}.
 Furthermore, as seen in Lemma~\ref{lem:G_IM:Feller resolvent of revived X}, $Y$ is Feller, so Dynkin's formula is applicable for
 any $f \in \sD(A^Y)$. Then, as $Y$ cannot jump to $\D$ at all,
  \begin{align*}
   A^Y f(v) 
   & = \lim_{\e \downarrow 0} \frac{\EV_v \big( f(Y_{\t^Y_\e}) \big) - f(v)}{\EV_v(\t^Y_\e)} \\
   & = \lim_{\e \downarrow 0} \int_{\cG \bs \{v\}} \big( f(g) - f(v) \big) \frac{\PV_v ( Y_{\t^Y_\e} \in dg ) }{\EV_v(\t^Y_\e)},
  \end{align*}
 and inserting equations \eqref{eq:G_IM:Feller data of revived BB on star graph, proof V}, \eqref{eq:G_IM:Feller data of revived BB on star graph, proof Ia}
 and the measure $\nu^X_\e = \nu^v_\e$, as defined in Feller's theorem~\ref{theo:G_BM:Feller data} for the process $X$, yields
 \begin{align*}
   A^Y f(v) 
   & = \lim_{\e \downarrow 0} \int_{\cG \bs \{v\}} \big( f(g) - f(v) \big) \frac{\PV_v ( X_{\t^X_\e} \in dg ) / \PV_v( \t^X_\e < \z^X )}{\EV_v(\t^X_\e) / \PV_v( \t^X_\e < \z^X )} \\
   & = \lim_{\e \downarrow 0} \int_{\cG \bs \{v\}} \big( f(g) - f(v) \big) \, \nu^X_\e(dg).
  \end{align*}
 By now exactly following the proof of Feller's theorem for the process $Y$, but using 
  \begin{align*}
   K^X_\e = 1 + \frac{\PV_v(X_{\t^X_\e} = \D)}{\EV_v(\t^X_\e)} + \int_{\cG \bs \{v\}} \big( 1 - e^{-d(v,g)} \big) \, \nu^X_{\e} (dg)
  \end{align*}
 instead of the normalization $K^Y_\e$ (where the second summand would be missing), we get
  \begin{align*}
    c^{\infty}_1 f(v) - \sum_{e \in \cE} c^{e}_2 f_e'(v) + \frac{c_3}{2} f''(v) - \int_{\cG \bs \{v\}} \big( f(g) - f(v) \big) \, c_4(dg) = 0.
  \end{align*}
 In comparison to the boundary condition of $A^X$, the only term missing is $c^{\D}_1$, which is due to $\PV_v(Y_{\t^Y_\e} = \D) = 0$.
\end{proof}

We quickly remark that the boundary conditions in 
Lemmas~\ref{lem:G_IM:Feller resolvent of revived X} and \ref{lem:G_IM:Feller data of revived BB on star graph}
are not normalized anymore if $c^{\D}_1 \neq 0$, but can always be renormalized if needed.

We have thus shown that, when reviving a Brownian motion, the killing parameter $c_1$ or $c_1^\D$ transforms into a jump part:
The resolvent calculation (Lemma~\ref{lem:G_IM:Feller resolvent of revived X}) proves that $c_1$ is completely transformed,
while the approach via Feller's theorem (Lemma~\ref{lem:G_IM:Feller data of revived BB on star graph}) only transforms $c_1^\D$ and leaves $c_1^\infty$
as ``killing portion'' intact. We are utilizing this discrepancy order to prove Theorem~\ref{theo:G_IM:Feller data c_infty vanishes}:

\begin{proof}[Proof of Theorem~\ref{theo:G_IM:Feller data c_infty vanishes}]
 Let $X$ be a Brownian motion on a star graph $\cG$ with vertex~$v$, 
 and let the boundary condition of $X$ at~$v$ be described by $\big( c^{\D}_1, c^{\infty}_1, (c_2^e, e \in \cE), c_3, c_4 \Big)$ as given in Feller's theorem~\ref{theo:G_BM:Feller data}. 
 By~\cite[Lemma~2.6]{WernerStar}, the generator of $X$ reads
  \begin{align*}
   \sD(A^X)  = 
       \Big\{ & f \in \cC^2_0(\cG): \\ 
              &       c_1 f(v) - \sum_{e \in \cE} c^{e}_2 f_e'(v) + \frac{c_3}{2} f''(v) - \int_{\cG \bs \{v\}} \big( f(g) - f(v) \big) \, c_4(dg) = 0 \Big\}. 
  \end{align*}
  
 Define
  \begin{align*}
   \tilde{s} :=  \sum_{e \in \cE} c^{e}_2 + \frac{c_3}{2} + \int_{\cG \bs \{v\}} \big( 1 - e^{-d(v,g)} \big) \, c_4(dg).
  \end{align*}
 By recalling equation~\eqref{eq:G_BM:infinite jumpmeasure}, we see that $\tilde{s} > 0$.
   
 Assume $c^{\infty}_1 \neq 0$. 
 Consider the instant return process $Y$ of $X$, that is the identical copies process 
 resulting from successive revivals of $X$ at the killing point $v$.
 Lemma~\ref{lem:G_IM:Feller resolvent of revived X} applied with the revival distribution $q = \e_v$ gives
  \begin{align*}
   \sD(A^Y) = 
     \Big\{ & f \in \cC^2_0(\cG):  \\
            & - \sum_{e \in \cE} \tilde{c}^{e}_2 f_e'(v) + \frac{\tilde{c}_3}{2} f''(v) - \int_{\cG \bs \{v\}} \big( f(g) - f(v) \big) \, \tilde{c}_4(dg) = 0 \Big\},
  \end{align*}
 with renormalized boundary weights
  \begin{align*}
   \forall e \in \cE:~ \tilde{c}_2^e := \tilde{s}^{-1} \, c_2^e, \quad
   \tilde{c}_3 := \tilde{s}^{-1} \, c_3, \quad 
   \tilde{c}_4 := \tilde{s}^{-1} \, c_4.
  \end{align*}
 On the other hand, it is $c_1 \geq c^{\infty}_1  > 0$. Lemma~\ref{lem:G_IM:Feller data of revived BB on star graph} is applicable and shows that
  \begin{align*}
   \sD(A^Y) = 
     \Big\{ & f \in \cC^2_0(\cG): \\
            & \doubletilde{c}{}^\infty_1 f(v) - \sum_{e \in \cE} \doubletilde{c}{}^{e}_2 f_e'(v) + \frac{\doubletilde{c}_3}{2} f''(v) - \int_{\cG \bs \{v\}} \big( f(g) - f(v) \big) \, \doubletilde{c}_4(dg) = 0 \Big\},
  \end{align*} 
 with renormalized boundary weights
 \begin{align*}
  \doubletilde{c}{}^\infty_1 := \doubletilde{s}{}^{-1} \, c^{\infty}_1, \quad 
  \forall e \in \cE:~ \doubletilde{c}{}_2^e := \doubletilde{s}{}^{-1} \, c_2^e, \quad 
  \doubletilde{c}_3 := \doubletilde{s}{}^{-1} \, c_3, \quad 
  \doubletilde{c}_4 := \doubletilde{s}{}^{-1} \, c_4,
 \end{align*}
 for $\doubletilde{s} :=  c^{\infty}_1 + \tilde{s}$.
 
 As both of the above sets of $\sD(A^Y)$ are equal, Lemma~\ref{lem:G_IM:Feller data is unique} yields $\doubletilde{c}{}^\infty_1 = 0$, 
 which contradicts the assumption, as $\doubletilde{s} \, \doubletilde{c}{}^\infty_1 = c^{\infty}_1 > 0$.
\end{proof}

\begin{appendix}

\section{Metric Graphs} \label{app:metrix graphs}

In this appendix, we give a full, rigorous definition of metric graphs and functions thereon, followed
by the discussion of loops and 
a method of compactification, which will be needed for Theorem~\ref{theo:G_BM:Feller data} on the characterization of Brownian motions.

Following the common notion, a graph is a collection of two (disjoint) entities, called the set of vertices $\cV$ and the set of edges $\cL$,
whereby one vertex $\cLV(l)$ or two vertices $\big( \cLV_-(l), \cLV_+(l) \big)$ are assigned to each edge~$l \in \cL$ as its ``endpoint(s)'',
building up the graph's combinatorial structure.
When also assigning to each edge~$l \in \cL$ a positive length $\cR(l)$ (being $+\infty$ in case of $l$ having only one ``endpoint'') 
and thus identifying $l$ with some interval $[0, \cR(l)]$ ($[0, +\infty)$ in the case $\cR(l) = +\infty$),
it is possible to examine the resulting metric graph as a locally one-dimensional structure of
subintervals of $\R_+$, which are ``glued together'' at their respective endpoints.
This introduces the metric of ``shortest paths'' on this graph: Inside an edge, the metric will conform locally to the Euclidean distance on $\R$,
while the distance between points on different edges will be measured by the shortest path along the edges of the graph leading from
one point to the other.

By the identification of edges with intervals, the order of $\R_+$ introduces a ``orientation'' on the graph, which we will implement in the following way:
For an ``internal'' edge $l \in \cL$ with two endpoints $\big( \cLV_-(l), \cLV_+(l) \big)$, the ``initial point'' $0$ of the respective edge interval $[0, \cR(l)]$
will be identified with $\cLV_-(l)$, and the ``final point''~$\cR(l)$ with $\cLV_+(l)$.
For an ``external'' edge $l \in \cL$ with only one endpoint $\cLV(l)$, the ``initial point'' $0$ of its edge interval $[0, +\infty)$ will be equal to~$\cLV(l)$.
Despite of this ``orientation'' of the underlying intervals, we will only consider ``undirected graphs'' in the classical sense of this term, that is, paths along the edges are always
allowed in both directions.

 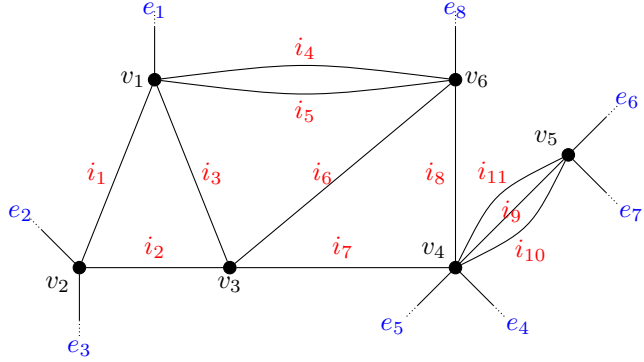
\begin{figure}[tb]
  \centering
  \begin{tikzpicture}[scale=0.5]
  \tikzstyle{every node}=[draw,shape=circle, fill, style={transform shape}];

  \node (A) at (2, 5) { };
  \node (B) at (0, 0) { };
  \node (C) at (4, 0) { };
  \node (D) at (10, 0) { };
  \node (E) at (13, 3) { };
  \node (F) at (10, 5) { }; 
  
  \tikzstyle{every node}=[];

  \node[left] (tA) at (A) {$v_1$};
  \node[below left] (tB) at (B) {$v_2$};
  \node[below] (tC) at (C) {$v_3$};
  \node[above left] (tD) at (D) {$v_4$};
  \node[above left] (tE) at (E) {$v_5$};
  \node[right] (tF) at (F) {$v_6$}; 
  
  \coordinate (A_1) at ($ (A) + (0, 1.4)$);
  \coordinate (B_1) at ($ (B) + (-1, 1)$);
  \coordinate (B_2) at ($ (B) + (0, -1.4)$);
  
  \coordinate (D_1) at ($ (D) + (1, -1)$);
  \coordinate (D_2) at ($ (D) + (-1, -1)$);
  \coordinate (E_1) at ($ (E) + (1, 1)$);
  \coordinate (E_2) at ($ (E) + (1, -1)$); 
  \coordinate (F_1) at ($ (F) + (0, 1.4)$); 

  \node[above, text=blue] (tA1) at (A_1) {$e_1$};
  \node[above left, text=blue] (tB1) at (B_1) {$e_2$};
  \node[below, text=blue] (tB2) at ($(B_2) - (0,0.3)$) {$e_3$};
  \node[below right, text=blue]  (tD1) at ($(D_1) + (0.1,-0.1)$) {$e_4$};
  \node[below left, text=blue] (tD2) at ($(D_2) - (0.1,0.1)$) {$e_5$};
  \node[above right, text=blue] (tE1) at (E_1) {$e_6$};
  \node[below right, text=blue] (tE2) at ($(E_2) + (0.1,-0.1)$) {$e_7$};
  \node[above, text=blue] (tF1) at (F_1) {$e_8$};

  \draw[-] (A) -- (B);
  \draw[-] (B) -- (C);
  \draw[-] (C) -- (A);
  
  \draw[-] (D) -- (E);
  \draw[-] (D) .. controls (12, 1) .. (E);  
  \draw[-] (D) .. controls (11, 2) .. (E);  
  \draw[-] (D) -- (F);

  \draw[-] (A) -- (A_1);
  \draw[densely dotted] (A_1) -- ($1.3*(A_1) - 0.3*(A)$);
  \draw[-] (B) -- (B_1);
  \draw[densely dotted] (B_1) -- ($1.3*(B_1) - 0.3*(B)$);
  \draw[-] (B) -- (B_2);
  \draw[densely dotted] (B_2) -- ($1.3*(B_2) - 0.3*(B)$);

  \draw[-] (D) -- (D_1);
  \draw[densely dotted] (D_1) -- ($1.3*(D_1) - 0.3*(D)$);
  \draw[-] (D) -- (D_2);
  \draw[densely dotted] (D_2) -- ($1.3*(D_2) - 0.3*(D)$);
  \draw[-] (E) -- (E_1);
  \draw[densely dotted] (E_1) -- ($1.3*(E_1) - 0.3*(E)$);
  \draw[-] (E) -- (E_2);  
  \draw[densely dotted] (E_2) -- ($1.3*(E_2) - 0.3*(E)$);
  \draw[-] (F) -- (F_1);
  \draw[densely dotted] (F_1) -- ($1.3*(F_1) - 0.3*(F)$);
  
  \draw[-] (C) -- (D);
  \draw[-] (C) -- (F);
  \draw[-] (A) .. controls ($0.5*(A)+0.5*(F) + (0,0.5)$) .. (F);  
  \draw[-] (A) .. controls ($0.5*(A)+0.5*(F) - (0,0.5)$) .. (F);

  \node[left, text=red] (iAB) at ($0.5*(A) + 0.5*(B)$) {$i_1$};
  \node[above, text=red] (iBC) at ($0.5*(B) + 0.5*(C)$) {$i_2$};
  \node[right, text=red] (iCA) at ($0.5*(A) + 0.5*(C)$) {$i_3$};
  \node[above, text=red] (iAF1) at ($0.5*(A) + 0.5*(F) + (0,0.3)$) {$i_4$};
  \node[below, text=red] (iAF2) at ($0.5*(A) + 0.5*(F) - (0,0.3)$) {$i_5$};
  \node[left, text=red] (iCF) at ($0.5*(C) + 0.5*(F)$) {$i_6$};
  \node[above, text=red] (iCD) at ($0.5*(C) + 0.5*(D)$) {$i_7$};
  \node[left, text=red] (iDF) at ($0.5*(D) + 0.5*(F)$) {$i_8$};
  
  \node[text=red] (iDE1) at ($0.5*(D) + 0.5*(E)$) {$i_9$};
  \node[below, text=red] (iDE2) at (12, 1) {$i_{10}$};
  \node[above, text=red] (iDE3) at (11, 2) {$i_{11}$};

\end{tikzpicture}
  \caption[A metric graph with $6$ vertices, $11$ internal edges, $8$ external edges]
          {A metric graph with $6$ vertices, $11$ internal edges, $8$ external edges.
           Here the curved lines are only used for illustrative reasons, they should still be considered as ``straight lines'' $[0, \cR(i)]$, $i \in \cI$.
           The ``orientation'' of the edges is not depicted here: e.g., if $\cLV(i_1) = \big( \cLV_-(i_1), \cLV_+(i_1) \big) = (v_1, v_2)$, then 
           $\big(i_1, 0 \big) \equiv v_1$ and $\big(i_1, \cR(i_1)\big) \equiv v_2$, or else $\big(i_1, 0 \big) \equiv v_2$ and $\big(i_1, \cR(i_1)\big) \equiv v_1$.}
           \label{fig:G_MG:metric graph}
 \end{figure}

\subsection{Basic Definitions} \label{subsec:G_MG:def}

An unified definition or notation for metric graphs does not appear to exist.
Classically, they originate in the context of ``quantum graphs'', see, e.g., \cite{Kuchment13}.
We follow the similar notational basis of~\cite{KostrykinSchrader06},
which Kostrykin, Potthoff and Schrader also use in their works \cite{KPS_Walsh12A}, \cite{KPS_Walsh12B}, \cite{KPS12}
on (continuous) Brownian motions on metric graphs. 
Observe that we will only consider \textit{finite} graphs, in the sense that the sets of vertices and edges will always be finite sets:

\begin{definition} \label{def:G_MG:graph}
 A tuple $\cG = (\cV, \cI, \cE, \cLV)$ is a \textdef{graph}, if $\cV \neq \emptyset$, $\cI$ and $\cE$ are finite, pairwise disjoint sets,
 and $\cLV$ is a map from the set $\cL := \cI \cup \cE$ into $(\cV \times \cV) \cup \cV$, such that $\cLV(e) \in \cV$ for all $e \in \cE$
 and $\cLV(i) = \big( \cLV_-(i), \cLV_+(i) \big) \in \cV \times \cV$ for all $i \in \cI$.
 $\cV$ is called the set of \textdef{vertices}, elements of $\cI$ and $\cE$ are called \textdef{internal edges} and \textdef{external edges}, $\cL$ is the set of all \textdef{edges}.
 For an internal edge $i$, $\cLV_-(i)$ and $\cLV_+(i)$ are called the \textdef{initial vertex} and \textdef{final vertex} of $i$,
 while for an external edge $e$, $\cLV(e)$ is the \textdef{initial vertex} of $e$.
 An internal edge~$i$ is called \textdef{loop}, if $\cLV_-(i) = \cLV_+(i)$.
 
 For a vertex $v \in \cV$, we define the sets
  \begin{align*}
   \cI_-(v) & := \big\{ i \in \cI: \cLV_-(i) = v \big\}, \quad 
   \cI_+(v) := \big\{ i \in \cI: \cLV_+(i) = v \big\}, \\
   \cI(v) & := \cI_-(v) \cup \cI_+(v), \\ 
   \cE(v) & := \big\{ e \in \cE: \cLV(e) = v \big\}, \\
   \cL(v) & := \cI(v) \cup \cE(v)
  \end{align*}
 of (initial, final) internal edges, external edges, all edges respectively, incident with~$v$.
\end{definition}

Whenever it is notationally convenient, we will also write $\cLV(l)$ for the set containing the vertex/vertices incident with the edge $l \in \cL$,
that is, $v \in \cLV(l)$ means $v \in \{ \cLV_-(l), \cLV_+(l) \}$ for an internal edge $l$, and $v \in \{ \cLV(l) \}$ for an external edge $l$.

\begin{definition} \label{def:G_MG:metric graph}
 Let $\cG = (\cV, \cI, \cE, \cLV)$ be a graph and $\cR \colon \cL \rightarrow (0, +\infty]$ be a map, such that $\cR(i) < +\infty$ for all $i \in \cI$,
 and $\cR(e) = +\infty$ for all $e \in \cE$. Then $(\cG, \cR)$ is called \textdef{metric graph}. For every edge $l \in \cL$, $\cR_l := \cR(l)$ is called \textdef{length} of $l$.
\end{definition}

The lengths of the edges and the graph's combinatorial structure 
induce the metric of the shortest paths on a metric graph $(\cV, \cI, \cE, \cLV, \cR)$,
which we will introduce rigorously next. To this end, consider 
 \begin{align*}
  \tcG = \cV \cup \bigcup_{i \in \cI} \big( \{i\} \times [0, \cR_i] \big) \cup \bigcup_{e \in \cE} \big( \{e\} \times [0, +\infty) \big). 
 \end{align*}
We extend the mapping $\cLV$ to $\tcG$ by setting $\cLV(v) := v$ for all $v \in \cV$ and $\cLV \big( (l,x) \big) := \cLV(l)$ 
for all~$(l,x) \in \tcG \bs \cV$.

The distance between two points inside the same edge can be measured by the Euclidean distance on $\R$, while
the distance of vertices can be measured by the length of the shortest possible path along the edges of the graph.
In order to distinguish both modes, we first define an auxiliary metric which only measures the direct distance inside the same edge:

\begin{definition} \label{def:G_MG:internal length}
 The \textdef{internal length} $\dint \colon \tcG \rightarrow [0, +\infty]$ is defined by
  \begin{align*}
   \forall e \in \cE, ~ x,y \in [0, +\infty): & \quad  \dint \big( (e,x), (e,y) \big) := \abs{x-y}, \\
   \forall i \in \cI, ~ x,y \in [0, \cR_i]:   & \quad  \dint \big( (i,x), (i,y) \big) := \abs{x-y}, \\
   \forall e \in \cE, ~ x \in [0, +\infty): & \quad  \dint \big( (e,x), \cLV(e) \big) := \dint \big( \cLV(e), (e,x) \big) := x, \\
   \forall i \in \cI, ~ x \in [0, \cR_i]:   & \quad  \dint \big( (i,x), \cLV_-(i) \big) := \dint \big( \cLV_-(i), (i,x) \big) := x, \\
                                            & \quad  \dint \big( (i,x), \cLV_+(i) \big) := \dint \big( \cLV_+(i), (i,x) \big) := \cR_i - x, \\
   \forall v \in \cV:                       & \quad  \dint (v,v) := 0,
  \end{align*}
 and $\dint(g_1, g_2) := +\infty$ for all other $g_1, g_2 \in \tcG$.
\end{definition}

The following metric properties of $\dint$ are immediate from its definition:

\begin{lemma} \label{lem:G_MG:dint properties}
 The following assertions hold true:
 \begin{enumerate}
  \item $\dint(g,g) = 0$ for all $g \in \tcG$.                                                     \label{itm:G_MG:dint properties i}
  \item $\dint(g_1, g_2) = \dint(g_2, g_1)$ for all $g_1, g_2 \in \tcG$.                           \label{itm:G_MG:dint properties ii}
  \item $\dint(g_1, g_3) \leq \dint(g_1, g_2) + \dint(g_2, g_3)$ for all $g_1, g_2, g_3 \in \tcG$. \label{itm:G_MG:dint properties iii}
 \end{enumerate}
\end{lemma}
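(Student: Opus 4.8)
The plan is to prove all three assertions by a case analysis driven directly by the definition of $\dint$. The organizing observation, which I would record first, is that $\dint(g,g') < +\infty$ holds precisely when $g$ and $g'$ both lie in a single \emph{closed edge}, by which I mean a set of the form $\overline{l} := \big(\{l\} \times [0,\cR_l]\big) \cup \cLV(l)$ for some $l \in \cL$ (here $\cLV(l)$ denotes the set of one or two incident vertices). On each such $\overline{l}$ one introduces the local coordinate sending $(l,x) \mapsto x$, the vertex $\cLV_-(l)$ (resp.\ $\cLV(l)$ for an external edge) to $0$, and $\cLV_+(l)$ to $\cR_l$; reading off the five finite cases of the definition shows that, in this coordinate, $\dint$ restricted to $\overline{l}$ is the Euclidean distance $|x-y|$ on $[0,\cR_l]$. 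This reduces every finite-distance computation to the one-dimensional Euclidean situation.

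Granting this, assertion (i) is immediate: for $g = v \in \cV$ we have $\dint(v,v)=0$ by the last line of the definition, while for $g=(l,x)$ the first two lines give $\dint\big((l,x),(l,x)\big) = |x-x| = 0$. Assertion (ii) follows because the definition is manifestly symmetric: each finite case is stated simultaneously for both orderings of its arguments, and the complementary ``all other pairs'' clause is itself invariant under swapping, so $\dint(g_1,g_2) = \dint(g_2,g_1)$ holds in every case.

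For the triangle inequality (iii) I would split on the middle point $g_2$. If either $\dint(g_1,g_2)=+\infty$ or $\dint(g_2,g_3)=+\infty$, the right-hand side is $+\infty$ and there is nothing to show, so assume both summands are finite. When $g_2=(l,x)$ is an edge point, its finite-distance partners all lie in the single closed edge $\overline{l}$, forcing $g_1,g_3 \in \overline{l}$; the claim then follows from the ordinary Euclidean triangle inequality read through the local coordinate of $\overline{l}$. The delicate case — and the one I expect to be the main obstacle — is $g_2 = v \in \cV$: then $g_1$ and $g_3$ are each either equal to $v$ or located on some edge incident with $v$, and one must pin down the situation in which $g_1$ and $g_3$ sit on \emph{different} edges meeting at $v$. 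This is exactly where the tension between the purely ``internal'' nature of $\dint$ (which assigns $+\infty$ across distinct edges) and the presence of a shared vertex has to be controlled, and it is the step on which I would concentrate; concretely, the argument would hinge on determining when a finite right-hand side still permits $\dint(g_1,g_3)=+\infty$, and on verifying that the values forced by the vertex cases of the definition are compatible with the asserted bound.
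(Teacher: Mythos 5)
Your handling of (i) and (ii) is fine, but your organizing observation is false, and it undermines your plan for (iii). In $\tcG$ the vertices are points distinct from the edge endpoints $(l,0)$ and $(l,\cR_l)$ (the identification only happens later, via $d$), and the definition of $\dint$ assigns a finite value only to the pairs listed explicitly: two points of the same edge, an edge point and an incident vertex, and $(v,v)$. A pair of \emph{distinct} vertices falls under the ``all other pairs'' clause even when both are endpoints of one and the same internal edge; hence for $l \in \cI$ with $\cLV_-(l) \neq \cLV_+(l)$ one has $\dint\big(\cLV_-(l), \cLV_+(l)\big) = +\infty$, not $\cR_l$. So $\dint$ restricted to your closed edge $\overline{l}$ is \emph{not} the Euclidean distance in the local coordinate, and ``$\dint < +\infty$ precisely when both points lie in a common closed edge'' fails. (For a loop your coordinate map is not even well defined, since $\cLV_-(l) = \cLV_+(l)$ would have to go to both $0$ and $\cR_l$.) In particular the very case of (iii) that you declare settled --- middle point $g_2 = (l,x)$ an edge point --- is where this bites: the triple $g_1 = \cLV_-(l)$, $g_2 = (l,x)$, $g_3 = \cLV_+(l)$ has right-hand side $x + (\cR_l - x) = \cR_l$ but left-hand side $+\infty$, which your Euclidean reduction silently converts into the finite value $\cR_l$.

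The case you flag as the main obstacle, $g_2 = v \in \cV$, is likewise not merely delicate but genuinely violated under the literal definition: for two distinct edges $l \neq k$ incident with $v$, the triple $g_1 = (l,x)$, $g_2 = v$, $g_3 = (k,y)$ gives $\dint(g_1,g_2) + \dint(g_2,g_3) = x + y < +\infty$ while $\dint(g_1,g_3) = +\infty$. So the program you outline --- ``determining when a finite right-hand side still permits $\dint(g_1,g_3) = +\infty$'' --- terminates in counterexamples rather than in a proof; no completion of your case analysis can establish (iii) for all triples of $\tcG$ with $\dint$ read exactly as defined. What is true, and what is actually used later (the triangle inequality for $d$ in Lemma~\ref{lem:G_MG:d properties} is obtained by concatenating paths, passages through vertices being accounted for by the inner infimum over $v_1 \in \cLV(g_1)$, $v_2 \in \cLV(g_2)$ in Definition~\ref{def:G_MG:metric of the shortest paths}), is the restricted assertion in which $\dint(g_1,g_3) < +\infty$, equivalently the observation that the inequality can only fail in the two patterns just exhibited; your Euclidean reduction does prove that restricted version once the vertex--vertex anomaly is excluded. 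Note that the paper offers no proof to compare against --- it asserts the lemma is immediate from the definition --- and your stalled case shows that immediacy is exactly what is missing: a complete write-up must either prove the restricted statement or amend the definition of $\dint$ (e.g., declaring $\dint(v,w)$ to be the minimal length of a common edge) before (iii) can be asserted for all triples.
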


In order to measure the distance between points on different edges, we need to consider the possible paths along the edges of the graph,
leading from the initial or final vertices of their respective edges:

\begin{definition} \label{def:G_MG:path}
 For $n \in \N_0$, $v_0, \ldots, v_n \in \cV$, $i_1, \ldots, i_n \in \cI$, 
 the ordered set $(v_0, i_1, v_1, \ldots, v_{n-1}, i_n, v_n)$ is called \textdef{path from~$v_0$ to~$v_n$} of \textdef{length}~$n$
 \textdef{across}~$(v_0, \ldots, v_n)$ \textdef{via}~$(i_1, \ldots, i_n)$, if
  \begin{align*}
   \forall k \in \{1, \ldots, n\}: \quad v_{k-1} \in \cLV(i_k), v_k \in \cLV(i_k).
  \end{align*}
 For $v, w \in \cV$, $\cP(v,w)$ is the set of all paths from $v$ to $w$, and
 $\cP = \bigcup_{v,w \in \cV} \cP(v,w)$ is the set of all possible paths.
\end{definition}

Notice that there is always a path from a vertex $v_0$ to itself, namely the path~$(v_0)$, and 
every path can be reversed: If $(v_0, i_1, v_1, \ldots, v_{n-1}, i_n, v_n)$ is a path from~$v_0$ to~$v_n$,
then $(v_n, i_n, v_{n-1}, \ldots, v_1, i_1, v_0)$ is a path from~$v_n$ to~$v_0$. 
In particular, $\cP(v,v)$ is not empty and $\cP(v,w) = \cP(w,v)$ holds for any vertices $v,w \in \cV$.
It also follows directly from the definition that paths can be concatenated: If 
\begin{align*}
  (v, i_1, v_1, \ldots, v_{n-1}, i_n, w) \quad \text{and} \quad (w, j_1, w_1, \ldots, w_{n-1}, j_n, u)
\end{align*}
are paths from $v$ to $w$, from $w$ to $u$ respectively,
then 
\begin{align*}
 (v, i_1, v_1, \ldots, v_{n-1}, i_n, w, j_1, w_1, \ldots, w_{n-1}, j_n, u)
\end{align*}
is a path from $v$ to $u$. Thus, the relation 
of being connected by a path is an equivalence relation on $\cV$.

\begin{definition} \label{def:G_MG:path length}
 The \textdef{length} of a path $\dPV \colon \cP \rightarrow [0, +\infty]$ is defined by 
  \begin{align*}
   \dPV \big( (v_0, i_1, v_1, \ldots, v_{n-1}, i_n, v_n) \big) := \cR_{i_1} + \cdots + \cR_{i_n}.
  \end{align*}
\end{definition}

We are now able to define a metric on the metric graph, induced by its combinatorial structure and its edge lengths:

\begin{definition} \label{def:G_MG:metric of the shortest paths}
 The \textdef{metric of the shortest paths} $d \colon \tcG \times \tcG \rightarrow [0, +\infty]$ 
 on a metric graph $(\cV, \cI, \cE, \cLV, \cR)$ is defined 
 for $v, w \in \cV$ by
  \begin{align*}
   d(v,w) 
   & := \inf_{ (v, \ldots, w) \in \cP(v,w)} \dPV \big( (v, \ldots, w) \big),
  \end{align*}
 as well as for $(g_1, g_2) \in (\tcG \times \tcG) \bs (\cV \times \cV)$ by
  \begin{align*}
   d \big( g_1, g_2 \big)
    := \inf \big\{ 
              & \dint(g_1, g_2), \\
              & \inf_{\substack{\, v_1 \in \cLV(g_1), \\ v_2 \in \cLV(g_2)}} \{ \dint(g_1, v_1) + d(v_1, v_2) + \dint(v_2, g_2) \}
            \big\}.
  \end{align*}
\end{definition}

Here, as usual, we set $\inf \emptyset := +\infty$. Therefore, $d(g_1,g_2) = +\infty$ holds if and only if there is
no path from $g_1$ to $g_2$ along the edges of $\cG$.

 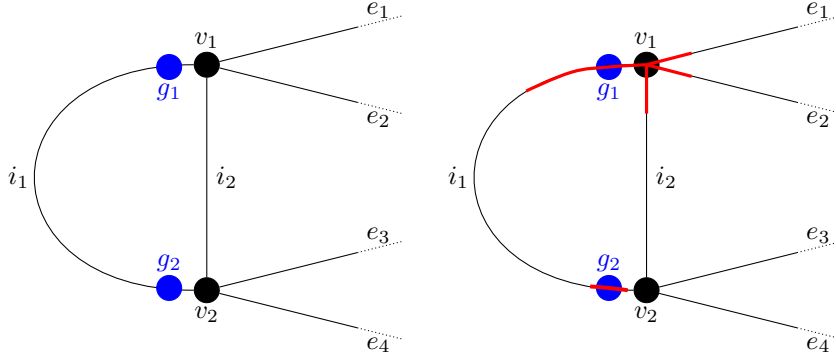
\begin{figure}[tb]
  \centering
  \begin{tikzpicture}
  \tikzstyle{every node}=[draw,shape=circle, fill, style={transform shape}];
  
  \node (D) at (0, 0) { };
  \node (E) at (0, 3) { };

  \tikzstyle{every node}=[];

  \node[below] (tD) at ($(D) + (0,-0.1)$) {$v_2$};
  \node[above] (tE) at ($(E) + (0,0.1)$) {$v_1$};
  
  
  \coordinate (D_1) at ($ (D) + (2, 0.5)$);
  \coordinate (D_2) at ($ (D) + (2, -0.5)$);
  \coordinate (E_1) at ($ (E) + (2, 0.5)$);
  \coordinate (E_2) at ($ (E) + (2, -0.5)$);

  \node[above right] (tD1) at (D_1) {$e_3$};
  \node[below right] (tD2) at (D_2) {$e_4$};
  \node[above right] (tE1) at (E_1) {$e_1$};
  \node[below right] (tE2) at (E_2) {$e_2$};

  \node[right] (tI2) at ($0.5*(E) + 0.5*(D)$) {$i_2$};  
  \node (tI1) at ($0.5*(E) + 0.5*(D) - (2.5,0)$) {$i_1$};

  \draw[-] (D) -- (E);
  \draw[-] (D) .. controls (-3,0) and (-3,3) .. (E);  

  \draw[-] (D) -- (D_1);
  \draw[densely dotted] (D_1) -- ($1.3*(D_1) - 0.3*(D)$);
  \draw[-] (D) -- (D_2);
  \draw[densely dotted] (D_2) -- ($1.3*(D_2) - 0.3*(D)$);
  \draw[-] (E) -- (E_1);
  \draw[densely dotted] (E_1) -- ($1.3*(E_1) - 0.3*(E)$);
  \draw[-] (E) -- (E_2);  
  \draw[densely dotted] (E_2) -- ($1.3*(E_2) - 0.3*(E)$);
  
  \tikzstyle{every node}=[draw,shape=circle, fill, style={transform shape}];
  \node[blue] (G1) at (-0.5,2.97) { };
  \node[blue] (G2) at (-0.5,0.03) { };
  
  \tikzstyle{every node}=[];
  \node[below, blue] at ($(G1) + (0,-0.1)$) {$g_1$};
  \node[above, blue] at ($(G2) + (0, 0.1)$) {$g_2$};
\end{tikzpicture}
  \begin{tikzpicture}
  \tikzstyle{every node}=[draw,shape=circle, fill, style={transform shape}];
  
  \node (D) at (0, 0) { };
  \node (E) at (0, 3) { };
  
  \node[blue] (G1) at (-0.5,2.97) { };
  \node[blue] (G2) at (-0.5,0.03) { };
  
  \tikzstyle{every node}=[];

  \node[below] (tD) at ($(D) + (0,-0.1)$) {$v_2$};
  \node[above] (tE) at ($(E) + (0,0.1)$) {$v_1$};
  
  
  \coordinate (D_1) at ($ (D) + (2, 0.5)$);
  \coordinate (D_2) at ($ (D) + (2, -0.5)$);
  \coordinate (E_1) at ($ (E) + (2, 0.5)$);
  \coordinate (E_2) at ($ (E) + (2, -0.5)$);

  \node[above right] (tD1) at (D_1) {$e_3$};
  \node[below right] (tD2) at (D_2) {$e_4$};
  \node[above right] (tE1) at (E_1) {$e_1$};
  \node[below right] (tE2) at (E_2) {$e_2$};

  \node[right] (tI2) at ($0.5*(E) + 0.5*(D)$) {$i_2$};  
  \node (tI1) at ($0.5*(E) + 0.5*(D) - (2.5,0)$) {$i_1$};

  \draw[-] (D) -- (E);
  \draw[-] (D) .. controls (-3,0) and (-3,3) .. (E);  

  \draw[-] (D) -- (D_1);
  \draw[densely dotted] (D_1) -- ($1.3*(D_1) - 0.3*(D)$);
  \draw[-] (D) -- (D_2);
  \draw[densely dotted] (D_2) -- ($1.3*(D_2) - 0.3*(D)$);
  \draw[-] (E) -- (E_1);
  \draw[densely dotted] (E_1) -- ($1.3*(E_1) - 0.3*(E)$);
  \draw[-] (E) -- (E_2);  
  \draw[densely dotted] (E_2) -- ($1.3*(E_2) - 0.3*(E)$);
  
  \node[below, blue] at ($(G1) + (0,-0.1)$) {$g_1$};
  \node[above, blue] at ($(G2) + (0, 0.1)$) {$g_2$};
  \draw[red, line width=1.2pt]  plot[smooth, tension=0.7] coordinates {(E) (G1) (-1,2.9) (-1.6, 2.65)};
  
  \draw[red, line width=1.2pt] (0, 3) -- (0.6,3.155);
  \draw[red, line width=1.2pt] (0, 3) -- (0.6,2.845);
  \draw[red, line width=1.2pt] (0, 3) -- (0,2.35);
  
  \draw[red, line width=1.5pt]  plot[smooth, tension=0.7] coordinates {(-0.25, 0) (G2) (-0.75,0.05)};
\end{tikzpicture}
  \caption[Shortest distance and neighborhoods in a metric graph]
          {Shortest distance and neighborhoods in a metric graph: Assume the edge lengths $\cR(i_1) = 10$, $\cR(i_2) = 5$
           and the points $g_1 = (i_1, 1)$, $g_2 = (i_2, 9)$. Then the internal distance inside the edge $i_1$ is given by $\dint(g_1, g_2) = 8$, 
           while the path across $(v_1, v_2)$ via $i_2$ 
           realizes the shortest distance $d(g_1, g_2) = 7$. On the right-hand graph, two neighborhoods of $g_1$ and $g_2$ are illustrated.}
           \label{fig:G_MG:shortest distance}
 \end{figure}

The reader should observe that the ``shortest path'' (and thus the distance) of two points inside the same edge must not equal 
the Euclidean distance of their local coordinates, cf.~figure~\ref{fig:G_MG:shortest distance} for an example. However, this will not cause any problems,
because the neighborhoods of points of the interior of an edge can always be chosen small enough in order to completely lie inside the corresponding edge.

We omit the easy but tedious proof of the metric properties for~$d$:

\begin{lemma} \label{lem:G_MG:d properties} 
 The following assertions hold true:
 \begin{enumerate}
  \item $d(g,g) = 0$ for all $g \in \tcG$.
  \item $d(g_1, g_2) = d(g_2, g_1)$ for all $g_1, g_2 \in \tcG$.
  \item $d(g_1, g_3) \leq d(g_1, g_2) + d(g_2, g_3)$ for all $g_1, g_2, g_3 \in \tcG$.
 \end{enumerate}
\end{lemma}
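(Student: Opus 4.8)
The plan is to reduce all three assertions to the vertex case together with the two elementary facts about $\dint$ recorded in Lemma~\ref{lem:G_MG:dint properties}, and then dispatch the triangle inequality by a short case analysis. First I would note that Definition~\ref{def:G_MG:metric of the shortest paths} can be written uniformly as
\[
 d(g_1,g_2) = \min\Big( \dint(g_1,g_2),\ \inf_{\substack{u \in \cLV(g_1)\\ w \in \cLV(g_2)}} \big[ \dint(g_1,u) + d(u,w) + \dint(w,g_2) \big] \Big)
\]
for all $g_1,g_2 \in \tcG$ (with the convention $\cLV(v)=\{v\}$ for $v \in \cV$): for a vertex pair $(v,w)$ the inner infimum collapses to $d(v,w)$ while $\dint(v,w) \in \{0,+\infty\}$, so the right-hand side returns the path-defined value $d(v,w)$, and for every other pair the formula is literally the definition. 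From this, (i) is immediate, since $\dint(g,g)=0$ by Lemma~\ref{lem:G_MG:dint properties}(i) (and the path $(v)$ gives $d(v,v)=0$ directly for vertices). For (ii), the symmetry of $\dint$ (Lemma~\ref{lem:G_MG:dint properties}(ii)) together with the symmetry of $d$ on $\cV \times \cV$ — which holds because reversing a path preserves its $\dPV$-length and $\cP(v,w)=\cP(w,v)$ — makes the displayed right-hand side invariant under swapping $g_1 \leftrightarrow g_2$ and $u \leftrightarrow w$.

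For (iii) I would first establish the triangle inequality on $\cV \times \cV$: given $\e>0$ and paths $P \in \cP(u,v)$, $Q \in \cP(v,w)$ with $\dPV(P) < d(u,v)+\e$ and $\dPV(Q) < d(v,w)+\e$, their concatenation lies in $\cP(u,w)$ and has length $\dPV(P)+\dPV(Q)$, whence $d(u,w) \le d(u,v)+d(v,w)$ after letting $\e \to 0$. Besides this and the triangle inequality for $\dint$ from Lemma~\ref{lem:G_MG:dint properties}(iii), I need one detour estimate: for a non-vertex $g$ with incident vertices $b,b' \in \cLV(g)$,
\[
 d(b,b') \le \dint(b,g) + \dint(g,b'),
\]
which holds because for an internal edge the right-hand side is either $0$ (when $b=b'$) or the full length $\cR_l \ge d(\cLV_-(l),\cLV_+(l))$, and the external case is trivial.

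The general inequality $d(g_1,g_3) \le d(g_1,g_2)+d(g_2,g_3)$ then follows by splitting on which branch of the uniform formula realizes each summand up to $\e$. In the generic case, where both are realized through incident vertices $a \in \cLV(g_1)$, $b,b' \in \cLV(g_2)$, $c \in \cLV(g_3)$, I would start from the uniform formula,
\[
 d(g_1,g_3) \le \dint(g_1,a) + d(a,c) + \dint(c,g_3),
\]
expand $d(a,c) \le d(a,b) + d(b,b') + d(b',c)$ by the vertex triangle inequality, insert $d(b,b') \le \dint(b,g_2) + \dint(g_2,b')$, and regroup the six resulting terms into exactly the two near-optimal costs for $d(g_1,g_2)$ and $d(g_2,g_3)$. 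When a summand is instead realized by its direct $\dint$-branch, its two points share an edge, so their incident vertex sets coincide and Lemma~\ref{lem:G_MG:dint properties}(iii) lets me splice the $\dint$-term into the chain; the only genuinely new route arises when the middle point $g_2$ is a vertex joining two distinct edges, in which case I route $g_1 \to g_2 \to g_3$ through $g_2$ directly using $d(g_2,g_2)=0$. Letting $\e \to 0$ closes each case. The main obstacle is purely the bookkeeping of this case analysis, concentrated in the detour estimate $d(b,b') \le \dint(b,g_2) + \dint(g_2,b')$, which is what permits the two half-routes to be glued at the shared middle point irrespective of which endpoint of $g_2$'s edge each half uses; the remaining steps are routine.
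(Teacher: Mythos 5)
The paper offers no proof to compare against: it explicitly writes that it omits ``the easy but tedious proof of the metric properties for $d$'', so your proposal supplies precisely the verification the paper skips. Your architecture is sound: the uniform min-formula unifying the vertex and non-vertex branches of Definition~\ref{def:G_MG:metric of the shortest paths} is correct (for a vertex pair the inner infimum collapses to $d(v,w)$ and $\dint(v,w)\in\{0,+\infty\}$), the vertex triangle inequality by $\e$-optimal path concatenation is exactly what the paper's remarks after Definition~\ref{def:G_MG:path} set up, and isolating the detour estimate $d(b,b')\le \dint(b,g)+\dint(g,b')$ for $b,b'\in\cLV(g)$ as the glue at the middle point is the right organizing idea for the case analysis.

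Two spots of your bookkeeping need repair, though neither is a fatal gap. First, in justifying the detour estimate you assert the right-hand side is $0$ when $b=b'$; it is actually $2\,\dint(b,g)$ (e.g.\ $2x$ for $g=(i,x)$, $b=b'=\cLV_-(i)$) --- the inequality still holds trivially there because the left side is $d(b,b)=0$, but the justification as written is off. Second, and more substantively: in the case where a summand is realized by the direct $\dint$-branch, your phrase ``their incident vertex sets coincide'' fails when one of the two points is a vertex $v$ (then $\cLV(v)=\{v\}$, not $\cLV(l)$), and in that configuration Lemma~\ref{lem:G_MG:dint properties}(iii) cannot be used to splice, because $\dint$ between \emph{distinct} vertices is $+\infty$ by definition; indeed, taken literally, part~(iii) of that lemma is itself false for the triple $\big(\cLV_-(i),(i,x),\cLV_+(i)\big)$, where
\begin{align*}
 \dint\big(\cLV_-(i),\cLV_+(i)\big)=+\infty \not\leq x+(\cR_i-x)=\cR_i,
\end{align*}
so it is only safe where the left-hand side is finite (all three points sharing an edge with the outer ones not two distinct vertices). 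The fix is already in your toolkit: when the $\dint$-realized pair contains a vertex, splice with the detour estimate instead, i.e.\ bound $d(g_1,b')\le\dint(g_1,g_2)+\dint(g_2,b')$ with $d$ (not $\dint$) on the left, taking $g=g_2$, $b=g_1\in\cLV(g_2)$ in your estimate; the same estimate also settles the subcase where both summands are $\dint$-realized on one edge with $g_1,g_3$ its two distinct endpoints. With these substitutions every case of your analysis closes.
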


We introduce the \textdef{geometric representation} of the metric graph $(\cV, \cI, \cE, \cLV, \cR)$ by identifying the points which have zero distance:
 \begin{align} \label{eq:G_MG:geometric graph}
  \cG & := \bigslant{\tcG}{ \{ (g_1, g_2) \in \tcG: d(g_1, g_2) = 0 \}}.
 \end{align} 
The equivalence sets of $\cG$ are very simple here, as only the vertices are identified with the endpoints of their respective edges,
that is, we have the following classes of points:
 \begin{itemize}
  \item vertex points:
        $~{\{ v \} 
          \cup \big\{ (e, 0) : e \in \cE, v = \cLV(e) \big\}
          \cup \big\{ (i, 0): i \in \cI, v = \cLV_-(i) \big\}}$ \\
        ${\cup \, \big\{ (i, \cR_{i}): i \in \cI, v = \cLV_+(i) \big\}}$  for $v \in \cV$;
  \item inner points: $\big\{ (l, x) \big\}$ for $l \in \cL$, $x \in (0, \cR_l)$.
 \end{itemize}
Thus, $\cG$ can be seen as a collection of closed intervals and half lines of $\R$ of lengths given by $\cR$, with some of their endpoints being ``glued together'' by
the graph's combinatorial structure $\cLV$. 
We will call the ``position'' on these intervals $\{l\} \times [0, \cR_l]$ (with $[0, \cR_l] := [0, +\infty)$ if $\cR_l = +\infty$)
\textdef{local coordinate}, that is, a point $g = (l,x)$ has the local coordinate~$x$. Of course, this coordinate is only meaningful
in the context of its relative edge $l$, as the identification may ``glue together'' an ``initial'' coordinate $0$ of some edge 
with a ``final'' coordinate $\cR_i$ of some other edge $i$ at their shared vertex. 

We will identify any edge $l \in \cL$ with the set of its corresponding points $\{l\} \times [0, \cR_l]$.
For later use, we define the \textdef{open interior} of an edge $l \in \cL$
to be 
 \begin{align*}
  l^0 := \{ l \} \times ( 0, \cR_l ),
 \end{align*}
as well as the set $\cG^0$ of all inner points of $\cG$ by
 \begin{align*}
  \cG^0 := \bigcup_{l \in \cL} \big( \{l\} \times (0, \cR_l) \big) = \cG \bs \cV.
 \end{align*}

Owing to the triangle inequality of $d$ on $\tcG$, $d$ assumes the same value on all representants of an equivalence class.
Thus, it can be extended to a mapping $d \colon \cG \times \cG \rightarrow [0, +\infty]$ on the equivalence classes. 
It follows from Lemma~\ref{lem:G_MG:d properties} that $d$ is a metric on $\cG$.
Here we allow a metric to take~values in $[0, +\infty]$. 
This is a slight extension of the standard definition of a ``metric'', which
does not impact any topological results that will be needed later (see \cite[Chapter~1]{Burago01}).

The topology on $\cG$ induced by $d$ is structured as follows: Inside $\cG^0$, it locally ``looks'' like the topology of some interval of $\R_+$,
as for all $(l,x) \in \cG^0$, $\e \in \big( 0, \min \{ x, \cR_l - x \} \big)$,
 \begin{align*}
  \BB_\e \big( (l,x) \big)
  = \big\{ g \in \cG: d \big( (l,x), g \big) < \e \big\}
  = \{l\} \times (x-\e, x + \e),
 \end{align*}
which is ``glued together'' at the vertices by $\cLV$,
as for all $v \in \cV$, the ball around~$v$ with radius~$\e \in \big( 0, \min \{ \cR_l, l \in \cL(v) \} \big)$ is
 \begin{align*}
  \BB_\e (v)
  = \big\{ g \in \cG: d \big( v, g \big) < \e \big\}
  & =   \smashoperator[r]{\bigcup\limits_{\substack{l \in \cL(v) \\ \cLV_{-}(l) = v}}} \big( \{l\} \times [0, \e) \big)
   \cup \smashoperator[r]{\bigcup\limits_{\substack{l \in \cI(v) \\ \cLV_{+}(l) = v}}} \big( \{l\} \times (\cR_l - \e,\cR_l] \big).
 \end{align*}

\begin{theorem}
 $d$ defines a complete, separable metric on $\cG$.
\end{theorem}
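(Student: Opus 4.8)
Since $d$ is already known to be a metric on $\cG$ --- this follows from Lemma~\ref{lem:G_MG:d properties} together with the identification of points of zero distance in the quotient~\eqref{eq:G_MG:geometric graph} --- the plan is to establish only separability and completeness. Separability is immediate from the finiteness of $\cL$: the set
\[
  D := \bigcup_{l \in \cL} \big( \{l\} \times (\Q \cap [0, \cR_l]) \big)
\]
is a finite union of countable sets, hence countable, and it is dense because for any $g = (l,x) \in \cG$ and rationals $x_n \to x$ in $[0, \cR_l]$ one has $d\big((l,x_n), (l,x)\big) \le \dint\big((l,x_n),(l,x)\big) = \abs{x_n - x} \to 0$.

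For completeness, let $(g_n)$ be a Cauchy sequence and put $\delta_n := d(g_n, \cV) = \min_{v \in \cV} d(g_n, v)$, the minimum existing since $\cV$ is finite. I would split into two cases according to $\liminf_n \delta_n$. If $\liminf_n \delta_n = 0$, then some subsequence satisfies $d(g_{n_k}, v_k) \to 0$ for vertices $v_k$; as $\cV$ is finite, a further subsequence has $v_k \equiv v$, so $g_{n_k} \to v$, and a Cauchy sequence with a convergent subsequence converges to the same limit, giving $g_n \to v$.

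The substantial case is $c := \liminf_n \delta_n > 0$, where $\delta_n \ge c/2$ for all large $n$. Its geometric core is the separation estimate that if $g=(l,x)$ and $g'=(l',x')$ lie on \emph{different} edges, then dropping the non-negative summands in the defining infimum of Definition~\ref{def:G_MG:metric of the shortest paths} and using $d \le \dint$ yields
\[
  d(g,g') \ge \min_{w \in \cLV(g)} \dint(g,w) \ge d(g, \cV) = \delta_n.
\]
Hence, once $d(g_m,g_n) < c/2$ (valid for $m,n$ large) while $\delta_m \ge c/2$, the points $g_m, g_n$ cannot lie on different edges, so they share a common edge $l$; on $l$ the same estimate shows that every path through a vertex has length $\ge c/2 > d(g_m,g_n)$, whence the infimum is realized by the internal term and $d(g_m,g_n) = \abs{x_m - x_n}$ in local coordinates. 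Thus $(x_n)$ is Cauchy in $[0,\cR_l]$, which is complete (a compact interval, or $[0,+\infty)$ when $l$ is external), so $x_n \to x^\ast$ and $g_n \to (l, x^\ast) \in \cG$.

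I expect the main obstacle to be exactly this last case: one must show that a Cauchy sequence staying away from the vertices eventually localizes onto a single edge, which hinges entirely on the separation estimate above. The two cases exhaust all Cauchy sequences and thereby yield completeness; note also that the argument is unaffected by $d$ taking the value $+\infty$ between distinct connected components, since the Cauchy property forces the relevant distances to be finite.
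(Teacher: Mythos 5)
Your proof is correct, and at the level of ingredients it follows the same route as the paper: separability from rational local coordinates on the finitely many edges, completeness from the completeness of the edge intervals $[0,\cR_l]$ and $[0,+\infty)$. The difference is rigor. The paper's completeness argument is a two-line sketch---it ``identifies'' a Cauchy sequence in $\cG$ with a sequence in the disjoint union of the edges and invokes completeness of each interval---which silently assumes precisely what you prove: that a Cauchy sequence either converges to a vertex or eventually localizes on a single edge, where $d$ reduces to the Euclidean distance of local coordinates. Your dichotomy on $\liminf_n d(g_n,\cV)$ together with the separation estimate $d(g,g') \geq d(g,\cV)$ for points on distinct edges (valid because then $\dint(g,g') = +\infty$, so the defining infimum runs over vertex paths only, each bounded below by $\dint(g,v_1) \geq d(g,\cV)$) supplies exactly the localization step the paper glosses over; your observation that an inner point lies on a unique edge then pins the whole tail of the sequence to one edge $l$, and $d(g_m,g_n)=\abs{x_m-x_n}$ follows since every vertex path costs at least $c/2$. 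So your write-up is, if anything, the more complete argument. Two cosmetic repairs: if $\liminf_n d(g_n,\cV) = +\infty$ the threshold $c/2$ is not defined, so replace $c$ by $\min(c,1)$; and your countable set $D$ misses any isolated vertex $v$ with $\cL(v)=\emptyset$ (such a point is at infinite distance from $D$), which the paper's separability set avoids by adjoining the finite set $\cV$ explicitly---for non-isolated vertices this is harmless, since $0 \in \Q$ and $(l,0)$ is identified with the incident vertex.
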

\begin{proof}
 As every sequence in $\cG$ can be identified with a sequence in 
  \begin{align*}
   \bigcup_{i \in \cI} \big( \{i\} \times [0, \cR_i] \big) \cup \bigcup_{e \in \cE} \big( \{e\} \times [0, +\infty) \big),
  \end{align*}
 and each of the intervals $[0, \cR_i]$, $[0, +\infty)$ is complete,
 every Cauchy sequence in $\cG$ converges. Furthermore, every edge is homeomorphic to an interval,
 which contains a countable, dense subset (take, e.g., the rational points),
 and the topology of $\cG$ inside $\cG^0$ locally coincides with the internal topology induced on the edges,
 so
 using the (finite) union of these countable separability sets for all edges $l \in \cL$ together with the (finite) set of vertices gives a separability set for $\cG$.
\end{proof}

\subsection{Discussion of Loops} \label{subsec:G_MG:tadpoles}

Loops, that is internal edges $i \in \cI$ with the same initial and final vertex $\cLV_-(i) = \cLV_+(i)$, will provide a nuisance in our constructions.
The following technique, as described in \cite[Section~VI]{KPS12}, will allow us to eliminate the loops while maintaining
the graph's topological structure
(and thus, when applied in the context of Brownian motions, will not alter the description of the processes on the graph, see Remark~\ref{rem:G_BM:trivial vertices}).

Assume we are given a metric graph $\cG = (\cV, \cI, \cE, \cLV, \cR)$ with a non-empty set of loops $\cI_t = \{ i \in \cI: \cLV_-(i) = \cLV_+(i) \}$.
We split every loop into two ``regular'' internal edges by introducing,
for each $i \in \cI_t$, a new vertex $v^i_t$ and two new internal edges~$i^{+}$ and~$i^{-}$, each with edge length $\cR(i)/2$,
thus defining a new metric graph $\tcG = (\tcV, \tcI, \tcE, \tcLV, \tcR)$ with 
 $\tcV := \cV \cup \{ v^i_t: i \in \cI_t \}$,
 $\tcI := (\cI \bs \cI_t) \cup \{ i^{+}, i^{-} : i \in \cI_t \}$, and
 $\tcE := \cE$.
The edge lengths $\tcR$ and
the new graph's combinatorial structure $\tcLV$ are chosen to be
equal to the old ones $\cR$, $\cLV$ respectively, on the remaining original set~$(\cI \bs \cI_t) \cup \cE$, 
and are extended to the new edges
by
 $\tcR(i^{-}) := \tcR(i^{+}) := \cR(i)/2$ 
 and~$\tcLV(i^{-}) := \big( \cLV_-(i), v^i_t \big)$, $\tcLV(i^{+}) := \big( v^i_t, \cLV_+(i) \big)$, 
 for $i \in \cI_t$,
see figure~\ref{fig:G_MG:tadpoles}.

 \begin{figure}[tb]
  \centering
  \begin{tikzpicture}[scale=0.5]

  \draw[blue] (2,5.2) arc (0:360:1);
  \draw[blue] (12,5) arc (0:360:1);

  \tikzstyle{every node}=[draw,shape=circle, fill, style={transform shape}];

  \node (A) at (2, 5) { };  
  \node (B) at (1, 2) { };
  \node (C) at (4, 2) { };
  \node (D) at (10, 2) { };
  \node (E) at (13, 3) { };
  \node (F) at (10, 5) { };

  \tikzstyle{every node}=[];
  
  \node at (-2, 6) {\scalebox{1.2}{$\cG$}};

  \node[left] (tA) at (A) {$v_1$};
  \node[below left] (tB) at (B) {$v_2$};
  \node[below] (tC) at (C) {$v_3$};
  \node[above left] (tD) at (D) {$v_4$};
  \node[above left] (tE) at (E) {$v_5$};
  \node[right] (tF) at (F) {$v_6$};

  \node (A_tad) at (-0.3, 5.2) { };
  \node (F_tad) at (12.4, 5) { }; 
  
  \node[blue] (tA_tad) at (A_tad) {$i_1$};
  \node[blue] (tF_tad) at (F_tad) {$i_2$};
  
  \coordinate (A_1) at ($ (A) + (0, 1.4)$);
  \coordinate (B_1) at ($ (B) + (-1, 1)$);
  \coordinate (B_2) at ($ (B) + (0, -1.4)$);
  
  \coordinate (D_1) at ($ (D) + (1, -1)$);
  \coordinate (D_2) at ($ (D) + (-1, -1)$);
  \coordinate (E_1) at ($ (E) + (1, 1)$);
  \coordinate (E_2) at ($ (E) + (1, -1)$); 
  \coordinate (F_1) at ($ (F) + (0, 1.4)$); 


  \draw[-] (A) -- (B);
  \draw[-] (B) -- (C);
  \draw[-] (C) -- (A);
  
  \draw[-] (D) -- (E);
  \draw[-] (D) .. controls (12, 2) .. (E);  
  \draw[-] (D) .. controls (11, 3) .. (E);  
  \draw[-] (D) -- (F);

  \draw[-] (A) -- (A_1);
  \draw[densely dotted] (A_1) -- ($1.3*(A_1) - 0.3*(A)$);
  \draw[-] (B) -- (B_1);
  \draw[densely dotted] (B_1) -- ($1.3*(B_1) - 0.3*(B)$);
  \draw[-] (B) -- (B_2);
  \draw[densely dotted] (B_2) -- ($1.3*(B_2) - 0.3*(B)$);

  \draw[-] (D) -- (D_1);
  \draw[densely dotted] (D_1) -- ($1.3*(D_1) - 0.3*(D)$);
  \draw[-] (D) -- (D_2);
  \draw[densely dotted] (D_2) -- ($1.3*(D_2) - 0.3*(D)$);
  \draw[-] (E) -- (E_1);
  \draw[densely dotted] (E_1) -- ($1.3*(E_1) - 0.3*(E)$);
  \draw[-] (E) -- (E_2);  
  \draw[densely dotted] (E_2) -- ($1.3*(E_2) - 0.3*(E)$);
  \draw[-] (F) -- (F_1);
  \draw[densely dotted] (F_1) -- ($1.3*(F_1) - 0.3*(F)$);
  
  \draw[-] (C) -- (D);
  \draw[-] (C) -- (F);
  \draw[-] (A) .. controls ($0.5*(A)+0.5*(F) + (0,0.5)$) .. (F);  
  \draw[-] (A) .. controls ($0.5*(A)+0.5*(F) - (0,0.5)$) .. (F);

%

\end{tikzpicture}
  
  \begin{tikzpicture}[scale=0.5]

  \draw[red] (2,5.2) arc (0:180:1);
  \draw[blue] (2,5.2) arc (0:-180:1);
  
  \draw[blue] (12,5) arc (0:180:1);
  \draw[red] (12,5) arc (0:-180:1);

  \tikzstyle{every node}=[draw,shape=circle, fill, style={transform shape}];

  \node (A) at (2, 5) { };
  \node[red] (A_tad) at (0, 5.2) { };
  
  \node (B) at (1, 2) { };
  \node (C) at (4, 2) { };
  \node (D) at (10, 2) { };
  \node (E) at (13, 3) { };
  \node (F) at (10, 5) { }; 
  \node[red] (F_tad) at (12, 5) { }; 
  
  \tikzstyle{every node}=[];
  
  \node at (-2, 6) {\scalebox{1.2}{$\tcG$}};

  \node[left] (tA) at (A) {$v_1$};
  \node[below left] (tB) at (B) {$v_2$};
  \node[below] (tC) at (C) {$v_3$};
  \node[above left] (tD) at (D) {$v_4$};
  \node[above left] (tE) at (E) {$v_5$};
  \node[right] (tF) at (F) {$v_6$}; 
  
  \node[left, red] (tA_tad) at (A_tad) {$v^{i_1}_t$};
  \node[left,blue] (tA_tad1) at ($0.5*(A) + 0.5*(A_tad) - (0,1.2)$) {$i^+_1$}; 
  \node[above,red] (tA_tad2) at ($0.5*(A) + 0.5*(A_tad) + (0,0.9)$) {$i^-_1$}; 
  
  \node[right, red] (tF_tad) at (F_tad) {$v^{i_2}_t$};
  \node[below,red] (tF_tad1) at ($0.5*(F) + 0.5*(F_tad) - (-0.1,0.8)$) {$i^-_2$}; 
  \node[above,blue] (tF_tad2) at ($0.5*(F) + 0.5*(F_tad) + (0.1,0.8)$) {$i^+_2$}; 
  
  \coordinate (A_1) at ($ (A) + (0, 1.4)$);
  \coordinate (B_1) at ($ (B) + (-1, 1)$);
  \coordinate (B_2) at ($ (B) + (0, -1.4)$);
  
  \coordinate (D_1) at ($ (D) + (1, -1)$);
  \coordinate (D_2) at ($ (D) + (-1, -1)$);
  \coordinate (E_1) at ($ (E) + (1, 1)$);
  \coordinate (E_2) at ($ (E) + (1, -1)$); 
  \coordinate (F_1) at ($ (F) + (0, 1.4)$); 


  \draw[-] (A) -- (B);
  \draw[-] (B) -- (C);
  \draw[-] (C) -- (A);
  
  \draw[-] (D) -- (E);
  \draw[-] (D) .. controls (12, 2) .. (E);  
  \draw[-] (D) .. controls (11, 3) .. (E);  
  \draw[-] (D) -- (F);

  \draw[-] (A) -- (A_1);
  \draw[densely dotted] (A_1) -- ($1.3*(A_1) - 0.3*(A)$);
  \draw[-] (B) -- (B_1);
  \draw[densely dotted] (B_1) -- ($1.3*(B_1) - 0.3*(B)$);
  \draw[-] (B) -- (B_2);
  \draw[densely dotted] (B_2) -- ($1.3*(B_2) - 0.3*(B)$);

  \draw[-] (D) -- (D_1);
  \draw[densely dotted] (D_1) -- ($1.3*(D_1) - 0.3*(D)$);
  \draw[-] (D) -- (D_2);
  \draw[densely dotted] (D_2) -- ($1.3*(D_2) - 0.3*(D)$);
  \draw[-] (E) -- (E_1);
  \draw[densely dotted] (E_1) -- ($1.3*(E_1) - 0.3*(E)$);
  \draw[-] (E) -- (E_2);  
  \draw[densely dotted] (E_2) -- ($1.3*(E_2) - 0.3*(E)$);
  \draw[-] (F) -- (F_1);
  \draw[densely dotted] (F_1) -- ($1.3*(F_1) - 0.3*(F)$);
  
  \draw[-] (C) -- (D);
  \draw[-] (C) -- (F);
  \draw[-] (A) .. controls ($0.5*(A)+0.5*(F) + (0,0.5)$) .. (F);  
  \draw[-] (A) .. controls ($0.5*(A)+0.5*(F) - (0,0.5)$) .. (F);

%

\end{tikzpicture}
  \caption[Extension of a metric graph for elimination of loops]
          {Extension of a metric graph for elimination of loops: Pictured is a metric graph~$\cG$ with two loops $i_1$, $i_2$ at $v_1$, $v_6$.
           By splitting each loop~$i$ up into two new internal edges $i^-$, $i^+$, connected via the original vertex and a newly adjoined vertex~$v^i_t$,
           we obtain the resulting graph~$\tcG$ which does not possess any~loops.} \label{fig:G_MG:tadpoles}
 \end{figure}
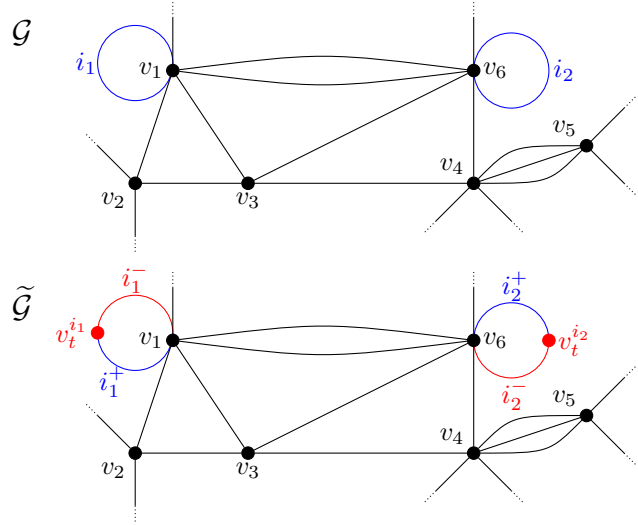 

Due to the identification of the new edges' endpoints with the adjoined vertices, 
and to the graph's metric being only dependent on the length of paths,
the induced topology on the new metric graph $\tcG$ equals the topology on $\cG$. 
$\tcG$ does not possess any loops.
Therefore, we will always be able to restrict our attention to metric graphs without loops in the sequel, as
all our examinations will solely be based on the topological structure of the underlying graph, but not on its representation.

\subsection{Functions on a Metric Graph} \label{subsec:G_MG:functions}

Any real valued function $f$ on a metric graph $\cG$ can be represented by collections of real~values $(f_v, v \in \cV)$ at the vertices $\cV$ and
of functions $(f_l, l \in \cL)$ on the edges $\cL$, with $f_l \colon [0, \cR_l] \rightarrow \R$
satisfying ${f_l(x) = f \big( (l,x) \big)}$, $x \in [0, \cR_l]$  (where in the following we set for notationally convenience $[0, \cR_l] := [0, +\infty)$ for $l \in \cE$),
and $f_v = f(v)$, $v \in \cV$. As the endpoints of the edges are identified by the graph's combinatorial structure, the values
 \begin{align*}
  f_e(0) = f \big( (e,0) \big), ~ f_v = f(v), ~ f_{i_-}(0) = f \big( (i_-,0) \big), ~ f_{i_+}(\cR_{i_+}) = f \big( (i_+, \cR_{i_+}) \big)
 \end{align*}
must coincide in case $e \in \cE$, $v = \cLV(e)$, and $i_- \in \cI$, $v = \cLV_-(i_-)$, and $i_+ \in \cI$, $v = \cLV_+(i_+)$.

In every small neighborhood of a non-vertex point $g \in \cG^0$, a real valued function~$f$ on~$\cG$ can locally be interpreted as a function on some one-dimensional interval.
Thus, the differentiability of $f_l$ at $x$ induces the notion of differentiability of~$f$ at~$g = (l,x) \in \cG^0$. In order to define differentiability at the vertices,
we must take care of the edges' ``orientation'':

\begin{definition}
 Let $f \colon \cG \rightarrow \R$ be a function on $\cG$, $v \in \cV$ and $l \in \cL(v)$.
 Then the \textdef{directional derivative of $f$ at $v$ along $l$} is defined by
  \begin{align*}
   f_l'(v) := 
    \begin{cases}
     \lim_{\xi \rightarrow v, \xi \in l^0} f'(\xi),   & v = \cLV_-(l), \\
     - \lim_{\xi \rightarrow v, \xi \in l^0} f'(\xi), & v = \cLV_+(l),
    \end{cases}
  \end{align*}
 whenever the right-hand side exists.
\end{definition}

\begin{definition} \label{def:G_MG:C2 functions}
 Let $\cC^{0,2}_0(\cG)$ be the subspace of all functions $f$ in $\cC_0(\cG)$, which are twice continuously differentiable on $\cG^0$, such that 
 for every $v \in \cV$, $l \in \cL(v)$, 
  \begin{align*}
   f_l''(v) := \lim_{\xi \rightarrow v, \xi \in l^0} f''(\xi)
  \end{align*}
 exists, and for every $e \in \cE$, $f_e''$ vanishes at infinity.
 Let $\cC^2_0(\cG)$ be the subset of those functions~$f$ in~$\cC^{0,2}_0(\cG)$ for which $f''$ extends from~$\cG^0$ to a function in~$\cC_0(\cG)$.
\end{definition}

By definition, a function $f \in \cC^{0,2}_0(\cG)$ lies in $\cC^2_0(\cG)$, if and only if for every $v \in \cV$,
the second derivatives at $v$ coincide, that is, if $f_k''(v) = f_l''(v)$ holds for all $k,l \in \cL(v)$,
and in this case, we will just write $f''(v)$ for this value. 
If $f \in \cC^2_0(\cG)$, then, for any edge $l \in \cL$, the limits of the first derivatives at its endpoint(s)
$\lim_{x \downdownarrows 0} f_l'(x)$ (and $\lim_{x \upuparrows \cR_l} f_l'(x)$, if~$l \in \cI$) must exist,
which can easily be seen by the fundamental theorem of calculus.
However, these limits on various edges do not need to coincide at their common vertex:
In general, the first derivate
$f'$ of $f \in \cC^2_0(\cG)$ does not extend from $\cC_0(\cG^0)$ to a function in $\cC_0(\cG)$.

We will mainly be concerned with the following operator on $\cC^2_0(\cG)$:

\begin{definition}
 The \textdef{Laplacian} $\D$ on $\cG$ is defined by 
  \begin{align*}
   \D \colon \cC^2_0(\cG) \rightarrow \cC_0(\cG), ~ f \mapsto \D(f) := f''.
  \end{align*}
\end{definition}

\subsection{Compactification of a Metric Graph} \label{subsec:MG:compactification}

We introduce the following method of ``cutting out'' vertex points from an existing graph and compactifying the resulting set.
This technique is used in the proof of~Theorem~\ref{theo:G_BM:Feller data}.

Let $(\cV, \cI, \cE, \cLV, \cR)$ be a metric graph with geometric representation
 \begin{align*}
  \tcG = \cV \cup \bigcup_{i \in \cI} \big( \{i\} \times [0, \cR_i] \big) \cup \bigcup_{e \in \cE} \big( \{e\} \times [0, +\infty) \big),
 \end{align*}
and $\cG$ be the set $\tcG$ with vertex points and endpoints of edges identified by its canonical metric~$d$, as introduced in subsection~\ref{subsec:G_MG:def}.
Let $\cV_0 \subsetneq \cV$, and $\tcG_1$ be the subset of~$\cG$ which results from removing the vertices $\cV_0$ together with their identified edge points from $\cG$,
that is, consider
 \begin{align*}
  \tcG_1  
  := & \ \tcG \bs \Big( \cV_0 \cup \bigcup_{i_- \in \cI_-(\cV_0)} \{(i_-,0)\}
                         \cup \bigcup_{i_+ \in \cI_+(\cV_0)} \{(i_+,\cR_i)\}
                         \cup \bigcup_{e \in \cE(\cV_0)}   \{(e,0)\}       \Big) \\
   = & \ \big( \cV \bs \cV_0 \big) 
      \cup \bigcup_{i \in \cI} \big( \{i\} \times I_i \big)
      \cup \bigcup_{e \in \cE} \big( \{e\} \times E_e \big)
 \end{align*}
with
 \begin{align*}
  I_i
  := \begin{cases}
     [0, \cR_i], & i \in \cI \bs \cI(\cV_0), \\
     (0, \cR_i], & i \in \cI_-(\cV_0) \bs \cI_+(\cV_0), \\
     [0, \cR_i), & i \in \cI_+(\cV_0) \bs \cI_-(\cV_0), \\
     (0, \cR_i), & i \in \cI_-(\cV_0) \cap \cI_+(\cV_0),
    \end{cases}
  \quad
  E_e
  := \begin{cases}
     [0, +\infty), & e \in \cE \bs \cE(\cV_0), \\
     (0, +\infty), & e \in \cE(\cV_0).
    \end{cases}
 \end{align*}
We compactify $\tcG_1$ by adjoining the missing interval endpoints $0$, $\cR_i$, $+\infty$, where needed.
For convenience (and for staying in the context of a metric graph as much as possible),
we also add new vertices for newly adjoined finite endpoints. Altogether, we set
 \begin{align*}
  \overbar{\tcG_1}
  := \cV_1
     \cup \bigcup_{i \in \cI} \big( \{i\} \times [0, \cR_i] \big)
     \cup \bigcup_{e \in \cE} \big( \{e\} \times [0, +\infty] \big),
 \end{align*}
with 
 \begin{align*}
  \cV_1 := \big( \cV \bs \cV_0 \big) \cup \big\{ v^i_-, i \in \cI_-(\cV_0) \big\} \cup \big\{ v^i_+, i \in \cI_+(\cV_0) \big\} \cup \big\{ v^e, e \in \cE(\cV_0) \big\},
 \end{align*}
where all new vertices $v^i_-, v^i_+, v^e$ are distinct points which are not in $\cG$. 
We adapt the combinatorial structure of the original graph to $\overbar{\tcG_1}$ by adjusting 
the mapping~$\cLV$ to~$\cLV_1 \colon \cL \rightarrow (\cV_1 \times \cV_1) \cup \cV_1$, defined~by
 \begin{align*}
  \cLV_1(i) 
  = \begin{cases}
     \big( \cLV_-(i), \cLV_+(i) \big), & i \in \cI \bs \cI(\cV_0), \\
     \big( v^i_-, \cLV_+(i) \big), & i \in \cI_-(\cV_0) \bs \cI_+(\cV_0), \\
     \big( \cLV_-(i),v^i_+ \big), & i \in \cI_+(\cV_0) \bs \cI_-(\cV_0), \\
     \big( v^i_-,v^i_+ \big), & i \in \cI_-(\cV_0) \cap \cI_+(\cV_0),
    \end{cases}  
  \quad
  \cLV_1(e)
  = \begin{cases}
     \cLV(e), & e \in \cE \bs \cE(\cV_0), \\
     v^e, & e \in \cE(\cV_0).
    \end{cases}
 \end{align*}

Thus, by removing vertices from the original graph $\cG$, we disconnected some edges which needed new initial or final vertices.
We added these, and additionally compactified the non-compact external edges $\{e\} \times [0, +\infty)$ to $\{e\} \times [0, +\infty]$.
Observe that the latter results in the ``compactified graph'' $\overbar{\tcG_1}$ not being a metric graph in the sense of our definition anymore.
 
Let $d_1$ be the metric of shortest paths, as defined in subsection~\ref{subsec:G_MG:def},
for the just constructed metric graph $\big( (\cV_1, \cI, \cE, \cLV_1), \cR \big)$.
We extend the metric $d_1$ to $\overbar{\tcG_1}$ by defining the distance of a point ``at infinity'' $(e, +\infty)$, $e \in \cE$, to any other point to be $+\infty$.
Then, as usual, we identify the points $g_1, g_2 \in \overbar{\tcG_1}$ for which $d_1(g_1, g_2) = 0$ holds true, naming the 
resulting set of equivalence sets $\overbar{\cG_1}$.

In order to be able to distinguish between the original vertex points of $\cG$ and the newly introduced ones of $\overbar{\cG_1}$ in the local representation,
we set
 \begin{itemize}
  \item if $i \in \cI_-(\cV_0)$: $(i, 0+)$ for $(i,0) = v^i_-$,
  \item if $i \in \cI_+(\cV_0)$: $(i, \cR_i-)$ for $(i,\cR_i) = v^i_+$,
  \item if $e \in \cE(\cV_0)$:  $(e, 0+)$ for $(e,0) = v^e$.
 \end{itemize}

Let the topology inside $\overbar{\cG_1} \bs \{(e, +\infty), e \in \cE\}$ be induced by $d_1$,
while for each $e \in \cE$, the point~$(e, +\infty)$ is a distinct point in the topology, 
topological inserted as the point at infinity of $\{e\} \times [0, +\infty)$
by the same technique the ``point at infinity''~$+\infty$ 
is embedded in $[0, +\infty)$ by the Alexandroff one-point compactification, that is,
as a point outside every compact set. 
 
Observe that by removing a vertex point $v$ and compactifying the resulting graph, the ``connection'' of all edges incident with $v$ is removed and 
a new endpoint is adjoint for each disconnected edge. Furthermore, every external edge $\{e\} \times [0, +\infty)$ is compactified to $\{e\} \times [0, +\infty]$,
thus adding points $(e, +\infty)$ for all external edges $e \in \cE$, see figure~\ref{fig:G_MG:compactification}.

 \begin{figure}[tb]
  \centering
  \begin{tikzpicture}[scale=0.5]
  \tikzstyle{every node}=[draw,shape=circle, fill, style={transform shape}];

  \node (A) at (2, 5) { };
  \node (B) at (0, 0) { };
  \node[red] (C) at (4, 0) { };
  \node (D) at (10, 0) { };
  \node (E) at (13, 3) { };
  \node (F) at (10, 5) { }; 
  
  \tikzstyle{every node}=[];

  \node at (-2, 6) {\scalebox{1.2}{$\cG$}};
  
  \node[left] (tA) at (A) {$v_1$};
  \node[below left] (tB) at (B) {$v_2$};
  \node[below, text=blue] (tC) at (C) {$v_3$};
  \node[above left] (tD) at (D) {$v_4$};
  \node[above left] (tE) at (E) {$v_5$};
  \node[right] (tF) at (F) {$v_6$}; 
  
  \coordinate (A_1) at ($ (A) + (0, 1.4)$);
  \coordinate (B_1) at ($ (B) + (-1, 1)$);
  \coordinate (B_2) at ($ (B) + (0, -1.4)$);
  
  \coordinate (D_1) at ($ (D) + (1, -1)$);
  \coordinate (D_2) at ($ (D) + (-1, -1)$);
  \coordinate (E_1) at ($ (E) + (1, 1)$);
  \coordinate (E_2) at ($ (E) + (1, -1)$); 
  \coordinate (F_1) at ($ (F) + (0, 1.4)$); 

  \node[above] (tA1) at (A_1) {$e_1$};
  \node[above left] (tB1) at (B_1) {$e_2$};
  \node[below] (tB2) at ($(B_2) - (0,0.3)$) {$e_3$};
  \node[below right]  (tD1) at ($(D_1) + (0.1,-0.1)$) {$e_4$};
  \node[below left] (tD2) at ($(D_2) - (0.1,0.1)$) {$e_5$};
  \node[above right] (tE1) at (E_1) {$e_6$};
  \node[below right] (tE2) at ($(E_2) + (0.1,-0.1)$) {$e_7$};
  \node[above] (tF1) at (F_1) {$e_8$};

  \draw[-] (A) -- (B);
  \draw[-] (B) -- (C);
  \draw[-] (C) -- (A);
  
  \draw[-] (D) -- (E);
  \draw[-] (D) .. controls (12, 1) .. (E);  
  \draw[-] (D) .. controls (11, 2) .. (E);  
  \draw[-] (D) -- (F);

  \draw[-] (A) -- (A_1);
  \draw[densely dotted] (A_1) -- ($1.3*(A_1) - 0.3*(A)$);
  \draw[-] (B) -- (B_1);
  \draw[densely dotted] (B_1) -- ($1.3*(B_1) - 0.3*(B)$);
  \draw[-] (B) -- (B_2);
  \draw[densely dotted] (B_2) -- ($1.3*(B_2) - 0.3*(B)$);

  \draw[-] (D) -- (D_1);
  \draw[densely dotted] (D_1) -- ($1.3*(D_1) - 0.3*(D)$);
  \draw[-] (D) -- (D_2);
  \draw[densely dotted] (D_2) -- ($1.3*(D_2) - 0.3*(D)$);
  \draw[-] (E) -- (E_1);
  \draw[densely dotted] (E_1) -- ($1.3*(E_1) - 0.3*(E)$);
  \draw[-] (E) -- (E_2);  
  \draw[densely dotted] (E_2) -- ($1.3*(E_2) - 0.3*(E)$);
  \draw[-] (F) -- (F_1);
  \draw[densely dotted] (F_1) -- ($1.3*(F_1) - 0.3*(F)$);
  
  \draw[-] (C) -- (D);
  \draw[-] (C) -- (F);
  \draw[-] (A) .. controls ($0.5*(A)+0.5*(F) + (0,0.5)$) .. (F);  
  \draw[-] (A) .. controls ($0.5*(A)+0.5*(F) - (0,0.5)$) .. (F);

  \node[left] (iAB) at ($0.5*(A) + 0.5*(B)$) {$i_1$};
  \node[above] (iBC) at ($0.5*(B) + 0.5*(C)$) {$i_2$};
  \node[right] (iCA) at ($0.5*(A) + 0.5*(C)$) {$i_3$};
  \node[above] (iAF1) at ($0.5*(A) + 0.5*(F) + (0,0.3)$) {$i_4$};
  \node[below] (iAF2) at ($0.5*(A) + 0.5*(F) - (0,0.3)$) {$i_5$};
  \node[left] (iCF) at ($0.5*(C) + 0.5*(F)$) {$i_6$};
  \node[above] (iCD) at ($0.5*(C) + 0.5*(D)$) {$i_7$};
  \node[left] (iDF) at ($0.5*(D) + 0.5*(F)$) {$i_8$};
  
  \node (iDE1) at ($0.5*(D) + 0.5*(E)$) {$i_9$};
  \node[below] (iDE2) at (12, 1) {$i_{10}$};
  \node[above] (iDE3) at (11, 2) {$i_{11}$};
\end{tikzpicture}
  
  \begin{tikzpicture}[scale=0.5]
  \tikzstyle{every node}=[draw,shape=circle, fill, style={transform shape}];

  \node (A) at (2, 5) { };
  \node (B) at (0, 0) { };
  \coordinate (C) at (4, 0) { };
  \node (D) at (10, 0) { };
  \node (E) at (13, 3) { };
  \node (F) at (10, 5) { }; 
  
  \node[red] (C1) at ($0.07*(A)+0.93*(C)$) {};
  \node[red] (C2) at ($0.07*(B)+0.93*(C)$) {};
  \node[red] (C3) at ($0.07*(D)+0.93*(C)$) {};
  \node[red] (C4) at ($0.07*(F)+0.93*(C)$) {};
  
  \tikzstyle{every node}=[];

  \node at (-3, 6) {\scalebox{1.2}{$\overbar{\cG_1}$}};
  
  \node[left] (tA) at (A) {$v_1$};
  \node[below left] (tB) at (B) {$v_2$};
  \node[above left] (tD) at (D) {$v_4$};
  \node[above left] (tE) at (E) {$v_5$};
  \node[right] (tF) at (F) {$v_6$};

  
  \tikzstyle{every node}=[text=red];
  \node[above left] (tC1) at (C1) {$v^{i_3}_-$};
  \node[below left] (tC2) at (C2) {$v^{i_2}_-$};
  \node[below right] (tC3) at (C3) {$v^{i_7}_-$};
  \node[above right] (tC4) at (C4) {$v^{i_6}_+$};
  
  \tikzstyle{every node}=[];
  
  \coordinate (A_1) at ($ (A) + (0, 1.4)$);
  \coordinate (B_1) at ($ (B) + (-1, 1)$);
  \coordinate (B_2) at ($ (B) + (0, -1.4)$);
  
  \coordinate (D_1) at ($ (D) + (1, -1)$) ;
  \coordinate (D_2) at ($ (D) + (-1, -1)$);
  \coordinate (E_1) at ($ (E) + (1, 1)$);
  \coordinate (E_2) at ($ (E) + (1, -1)$); 
  \coordinate (F_1) at ($ (F) + (0, 1.4)$);

  \draw[-] (A) -- (B);
  \draw[-] (B) -- (C2);
  \draw[-] (C1) -- (A);

  \draw[-] (A) -- (A_1);
  \draw[densely dotted] (A_1) -- ($1.3*(A_1) - 0.3*(A)$);
  \draw[-] (B) -- (B_1);
  \draw[densely dotted] (B_1) -- ($1.3*(B_1) - 0.3*(B)$);
  \draw[-] (B) -- (B_2);
  \draw[densely dotted] (B_2) -- ($1.3*(B_2) - 0.3*(B)$);

  \draw[-] (D) -- (D_1);
  \draw[densely dotted] (D_1) -- ($1.3*(D_1) - 0.3*(D)$);
  \draw[-] (D) -- (D_2);
  \draw[densely dotted] (D_2) -- ($1.3*(D_2) - 0.3*(D)$);
  \draw[-] (E) -- (E_1);
  \draw[densely dotted] (E_1) -- ($1.3*(E_1) - 0.3*(E)$);
  \draw[-] (E) -- (E_2);  
  \draw[densely dotted] (E_2) -- ($1.3*(E_2) - 0.3*(E)$);
  \draw[-] (F) -- (F_1);
  \draw[densely dotted] (F_1) -- ($1.3*(F_1) - 0.3*(F)$);
  
  \draw[-] (D) -- (E);
  \draw[-] (D) .. controls (12, 1) .. (E);  
  \draw[-] (D) .. controls (11, 2) .. (E);  
  \draw[-] (D) -- (F);
  
  \draw[-] (C3) -- (D);
  \draw[-] (C4) -- (F);
  \draw[-] (A) .. controls ($0.5*(A)+0.5*(F) + (0,0.5)$) .. (F);  
  \draw[-] (A) .. controls ($0.5*(A)+0.5*(F) - (0,0.5)$) .. (F);

  \node[left] (iAB) at ($0.5*(A) + 0.5*(B)$) {$i_1$};
  \node[above] (iBC) at ($0.5*(B) + 0.5*(C)$) {$i_2$};
  \node[right] (iCA) at ($0.5*(A) + 0.5*(C)$) {$i_3$};
  \node[above] (iAF1) at ($0.5*(A) + 0.5*(F) + (0,0.3)$) {$i_4$};
  \node[below] (iAF2) at ($0.5*(A) + 0.5*(F) - (0,0.3)$) {$i_5$};
  \node[left] (iCF) at ($0.5*(C) + 0.5*(F)$) {$i_6$};
  \node[above] (iCD) at ($0.5*(C) + 0.5*(D)$) {$i_7$};
  \node[left] (iDF) at ($0.5*(D) + 0.5*(F)$) {$i_8$};
  
  \node (iDE1) at ($0.5*(D) + 0.5*(E)$) {$i_9$};
  \node[below] (iDE2) at (12, 1) {$i_{10}$};
  \node[above] (iDE3) at (11, 2) {$i_{11}$};

  \tikzstyle{every node}=[draw,shape=circle, fill, style={transform shape}, red];
  \node (eA1) at ($1.3*(A_1) - 0.3*(A)$) {};
  \node (eB1) at ($1.3*(B_1) - 0.3*(B)$) {};
  \node (eB2) at ($1.3*(B_2) - 0.3*(B)$) {};
  \node (eD1) at ($1.3*(D_1) - 0.3*(D)$) {};
  \node (eD2) at ($1.3*(D_2) - 0.3*(D)$) {};
  \node (eE1) at ($1.3*(E_1) - 0.3*(E)$) {};
  \node (eE2) at ($1.3*(E_2) - 0.3*(E)$) {};
  \node (eF1) at ($1.3*(F_1) - 0.3*(F)$) {};

  \tikzstyle{every node}=[red];
  \node[left] (teA1) at (eA1) {$\scriptstyle{(e_1, +\infty)}$};
  \node[above left] (teB1) at (eB1) {$\scriptstyle{(e_2, +\infty)}$};
  \node[left] (teB2) at (eB2) {$\scriptstyle{(e_3, +\infty)}$};
  \node[below right] (teD1) at (eD1) {$\scriptstyle{(e_4, +\infty)}$};
  \node[below left] (teD2) at (eD2) {$\scriptstyle{(e_5, +\infty)}$};
  \node[above right] (teE1) at (eE1) {$\scriptstyle{(e_6, +\infty)}$};
  \node[below right] (teE2) at (eE2) {$\scriptstyle{(e_7, +\infty)}$};
  \node[right] (teF1) at (eF1) {$\scriptstyle{(e_8, +\infty)}$};
  
  \tikzstyle{every node}=[];
  \node (tA1) at ($0.5*(teA1) + 0.5*(A)$) {$e_1$};
  \node[above] (tB1) at ($0.5*(B_1) + 0.5*(B)$) {$e_2$};
  \node (tB2) at ($0.5*(teB2) + 0.5*(B) + (1.1, 0)$) {$e_3$};
  \node[below] (tD1) at ($0.5*(D_1) + 0.5*(D)$) {$e_4$};
  \node[below] (tD2) at ($0.5*(D_2) + 0.5*(D)$) {$e_5$};
  \node[above] (tE1) at ($0.5*(E_1) + 0.5*(E)$) {$e_6$};
  \node[below] (tE2) at ($0.5*(E_2) + 0.5*(E)$) {$e_7$};
  \node (tF1) at ($0.5*(teF1) + 0.5*(F)$) {$e_8$}; 
\end{tikzpicture}
  \caption[A metric graph and its resulting compactification]
          {Illustration of a metric graph $\cG$ and its resulting compactification~$\overbar{\cG_1}$ when the vertex set~$\cV_0 := \{v_3\}$ is removed from~$\cG$.
           Here, $\cI_-(v_3) = \{ i_2, i_3, i_7 \}$, $\cI_+(v_3) = \{ i_6 \}$, $\cE(v_3) = \emptyset$. 
           The new points introduced by the compactification are depicted in red.} \label{fig:G_MG:compactification}
 \end{figure}
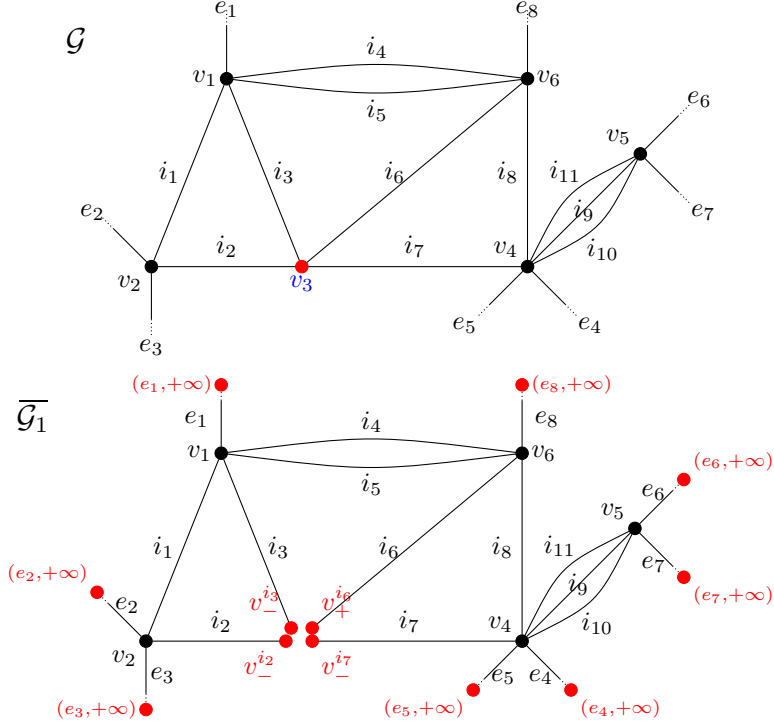

\begin{definition}
 $\cC(\overbar{\cG_1})$ is the set of all continuous functions on~$\overbar{\cG_1}$, that is, the set of all functions $f \colon \overbar{\cG_1} \rightarrow \R$
 which are continuous on $\overbar{\cG_1} \bs \{(e, +\infty), e \in \cE\}$ with respect to $d_1$ and for which
  \begin{align*}
   f \big( (e, +\infty) \big) = \lim_{x \rightarrow +\infty} f \big( (e,x) \big)
  \end{align*}
 exists for all $e \in \cE$. $\cC(\overbar{\cG_1})$ is endowed with its natural norm 
  \begin{align*}
   \norm{f}_\infty := \sup_{x \in \overbar{\cG_1}} \bigabs{f(x)}, \quad f \in \cC(\overbar{\cG_1}).
  \end{align*}
\end{definition}

%
%

The separability of $\cC(\overbar{\cG_1})$ will be essential in Theorem~\ref{theo:G_BM:Feller data}:

\begin{theorem} \label{theo:G_MG:cont functions on compactification separable}
 $\big( \cC(\overbar{\cG_1}), \norm{\,\cdot\,}_\infty \big)$ is separable.
\end{theorem}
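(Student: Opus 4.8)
The plan is to realize $\cC(\overbar{\cG_1})$ as a subspace of a finite direct sum of separable function spaces over compact intervals. First I would record the geometric picture: $\overbar{\cG_1}$ is a finite gluing of finitely many \emph{compactified edges}. Each internal edge $i \in \cI$ contributes the compact interval $\{i\} \times [0, \cR_i]$, and each external edge $e \in \cE$ contributes $\{e\} \times [0, +\infty]$, which — by the Alexandroff construction used to adjoin the point $(e, +\infty)$ — is homeomorphic to the compact interval $[0,1]$ via a map such as $x \mapsto x/(1+x)$ sending $+\infty$ to $1$. Since $\cV$, $\cI$, $\cE$ are finite, $\overbar{\cG_1}$ is a finite union of finitely many compact intervals, glued at the finite vertex set $\cV_1$.

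Next, I would introduce the restriction map $\Phi \colon \cC(\overbar{\cG_1}) \to \bigoplus_{l \in \cL} \cC(K_l)$, where $K_l := \{l\} \times [0, \cR_l]$ denotes the compactified edge (with the convention $[0, \cR_l] := [0, +\infty]$ for $l \in \cE$), $\Phi(f) := (f|_{K_l})_{l \in \cL}$, and the finite direct sum carries the maximum norm $\norm{(g_l)_{l \in \cL}} := \max_{l \in \cL} \norm{g_l}_\infty$. By the very definition of $\cC(\overbar{\cG_1})$, each restriction $f|_{K_l}$ is continuous on $K_l$ — for external edges the prescribed limit at infinity is exactly the continuity condition at $(e, +\infty)$ — so $\Phi$ is well-defined and linear. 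Because the edges cover $\overbar{\cG_1}$, we have $\norm{f}_\infty = \max_{l \in \cL} \norm{f|_{K_l}}_\infty$, so $\Phi$ is an isometry onto its image.

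It then remains to observe that the target is separable. Each factor $\cC(K_l)$ is the space of continuous functions on a compact interval, hence separable: the polynomials with rational coefficients form a countable dense subset by the Weierstrass approximation theorem, after transporting through the above homeomorphism $[0,+\infty] \cong [0,1]$ in the external case. A finite direct sum of separable metric spaces is again separable, and any subspace of a separable metric space is separable. Consequently the image $\Phi\big( \cC(\overbar{\cG_1}) \big)$ is separable, and since $\Phi$ is an isometry, so is $\big( \cC(\overbar{\cG_1}), \norm{\,\cdot\,}_\infty \big)$ itself.

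The only genuinely delicate point is the treatment of the points at infinity: one must verify that the topology on $\overbar{\cG_1}$ prescribed in the construction — the metric $d_1$ away from the $(e, +\infty)$, together with the one-point compactification at each external end — indeed makes each $K_l$ an honest compact interval and forces the restriction of every $f \in \cC(\overbar{\cG_1})$ to be continuous up to and including $(e, +\infty)$. Once this identification is secured, everything else reduces to the classical separability of $\cC$ on a compact interval and the elementary stability of separability under finite products and passage to subspaces.
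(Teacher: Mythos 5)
Your proof is correct, and although it shares the paper's basic reduction---edge-by-edge separability of $\cC([a,b])$ and $\cC([0,+\infty])$, the latter via the homeomorphism $[0,+\infty] \cong [0,1]$---it concludes by a genuinely different mechanism. The paper constructs an explicit countable dense subset of $\cC(\overbar{\cG_1})$ itself: it approximates on each edge by members of a countable dense family, approximates the values at the vertices by rationals, and then patches the pieces into functions that are continuous on the glued graph by linear interpolation near the vertices; this patching is the fiddly step (edge-wise approximants need not agree at a shared vertex), and the paper only sketches it. You sidestep the patching entirely: your restriction map $\Phi \colon \cC(\overbar{\cG_1}) \rightarrow \bigoplus_{l \in \cL} \cC(K_l)$ is a linear isometry onto its image under the max norm, and since separability is hereditary for metric spaces (second countability passes to subspaces), the image---hence $\cC(\overbar{\cG_1})$---is separable without your approximants ever needing to be continuous on the glued graph. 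What the paper's route buys is an explicit countable dense family; what yours buys is the elimination of the gluing argument, at the modest cost of invoking heredity of separability, which holds in metric spaces but not in general topological ones. Two minor remarks: your assertion that the compactified edges cover $\overbar{\cG_1}$ uses that every vertex of $\cV_1$ is an edge endpoint---true for all newly adjoined vertices by construction, but an isolated original vertex (not excluded by the paper's definition of a graph) would require adjoining a finite, trivial extra factor; and the ``delicate point'' you flag is settled by the paper's construction of the topology in the compactification subsection, where each $\{e\} \times [0,+\infty]$ carries exactly the one-point compactification topology, so the limit condition defining $\cC(\overbar{\cG_1})$ at $(e,+\infty)$ is precisely continuity on $K_e$.
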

\begin{proof}
  As $\big( \cC ([a,b]), \norm{\,\cdot\,}_\infty \big)$, for $a, b \in \R$, $a < b$, and $\big( \cC ([0, \infty]), \norm{\,\cdot\,}_\infty \big)$ are separable,
  we are able to approximate every continuous function on each separate edge~$l \in \cL$ of~$\overbar{\cG_1}$ 
  by functions in a respective separability set of~$\cC([0, \cR_l])$.
  By also approximating the values on the vertices~$v \in \cV$ by values in $\Q$, and 
  continuously connecting the approximations, e.g.\ by linear interpolation, we obtain 
  a countable, dense subset of~$\cC(\overbar{\cG_1})$.  
\end{proof}

\section{Strong Markov Processes \& Feller Processes} \label{app:Feller processes}

In the present article, a (strong) Markov process 
 \begin{align*}
  X = \big( \Omega, \sG, (\sG_t, t \geq 0), (X_t, t \geq 0), (\Theta_t, t \geq 0), (\PV_x, x \in E) \big)
 \end{align*}
on a measurable space $(E, \sE)$ is understood in the canonical sense of Dynkin~\cite{Dynkin65}, Blumenthal--Getoor~\cite{BlumenthalGetoor69} and Sharpe~\cite{Sharpe88}.
The reader may consult~\cite[Section~2.1]{WernerStar} for a short summary. In particular, we always assume right-continuity and normality.
As usual, the associated semigroup $(T_t, t \geq 0)$ and resolvent $(U_\a, \a > 0)$ are defined for all non-negative or bounded, $\sE$-measurable functions 
$f \in p\sE \cup b\sE$ by 
 \begin{align*}
  T_t f(x) := \EV_x \big( f(X_t) \big), \quad U_\a f(x) := \EV_x \Big( \int_0^\infty  e^{-\alpha t} \, f(X_t) \, dt \Big), \quad x \in E.
 \end{align*}
We will mainly be using following well-known localization techniques for both the resolvent and the generator:

Given a strong Markov process $X$, its resolvent can be decomposed at any stopping time~$\tau$ with the help of Dynkin's formula~\cite[Section~5.1]{Dynkin65}
 \begin{align}\label{eq:Dynkins formula (resolvent)}
   U_\alpha f(x) = \EV_x \Big( \int_0^\tau  e^{-\alpha t} \, f(X_t) \, dt \Big) + \EV_x \big( e^{-\alpha \tau} \, U_\alpha f(X_\tau) \, \1_{\{\tau < \infty\}} \big).
 \end{align}

We call a Markov process Feller process, if its semigroup is $\cC_0$-Feller,\footnote{For a locally compact space~$E$ with countable base,
   $\cC_0(E)$ is the set of all continuous functions which vanish at infinity. The space of all continuous and bounded functions on~$E$ is denoted by~$b\cC(E)$.} that is, if 
 \begin{enumerate}
  \item $T_t \cC_0(E) \subseteq \cC_0(E)$ for all $t \geq 0$, and   \label{itm:Feller (semigroup)}
  \item $\lim_{t \downarrow 0} T_t f(x) = f(x)$ for all $f \in \cC_0(E)$, $x \in E$. \label{itm:Feller (normality)}
 \end{enumerate}
Here, \ref{itm:Feller (normality)} is already implied by the assumed right continuity and normality of any Markov process, and yields
 \begin{align}\label{eq:Feller (resolvent convergence)}
   \lim_{\a \rightarrow \infty} \a U_\a f(x) = f(x) \quad \text{for all $f \in \cC_0(E)$, $x \in E$.}
 \end{align}   
Furthermore, condition~\ref{itm:Feller (semigroup)} can be equivalently replaced (cf.~\cite[Appendix~B]{KPS12}) 
by the corresponding condition of the resolvent, that is,
 \begin{align}\label{eq:Feller (resolvent)}
   U_\a \cC_0(E) \subseteq \cC_0(E) \quad \text{for all $\a > 0$}.
 \end{align}   

Every Feller process is a strong Markov process (cf.~\cite[Sections~III.8--III.9]{RogersWilliams1}), and, under the usual hypotheses,
a right process (cf.~\cite[Corollary~4.1.4]{MarcusRosen06}). 
It is uniquely determined by its semigroup restricted to $\cC_0(E)$ (see~\cite[Proposition~III.2.2]{RevuzYor94}),
and thus (cf.~\cite[Theorems~1.1, 1.2, 1.7]{Dynkin65}) by its resolvent on $\cC_0$ or equivalently by its weak $\cC_0$-generator
 \begin{align}
   A \colon \sD(A) \rightarrow \cC_0, \quad A f(x) := \lim_{t \downarrow 0} \frac{T_t f(x) - f(x)}{t},
 \end{align}
with its domain $\sD(A)$ being the set of all $f \in \cC_0$ for which the right-hand limit exists and constitutes a function in $\cC_0$. 

Dynkin's formula \cite[Theorem~5.2]{Dynkin65} gives the means to localize the generator:
Given a sequence $(\e_n, n \in \N)$ of positive numbers converging to $0$, let $\t_{\e_n}$ be the first exit time of $X$ from the closed ball $\overline{\BB_x(\e_n)}$.
If $0 < \EV_x(\t_{\e_n}) < +\infty$ for all $n \in \N$, then the generator of a Feller process can be computed by
  \begin{align}\label{eq:Dynkins formula (generator)}
   A f(x) 
   & = \lim_{n \rightarrow \infty} \frac{\EV_v \big( f\big(X(\t_{\e_n})\big) \big) - f(x)}{\EV_x(\t_{\e_n})}, \quad f \in \sD(A), \ x \in E,
  \end{align}

Dynkin's formula for the generator is not applicable for traps, that is, for points $x \in E$ satisfying
  \begin{align*}
   \PV_x \big( \forall t \geq 0 : X_t = x \big) = 1.
  \end{align*}
For all other points, there exists a sequence $(\e_n, n \in \N)$ which satisfies the requirements of the above formula
 (we follow~\cite[p.~53]{Knight81}):
 
\begin{lemma} \label{lem:A_FP:trap exit expectation}
 Let $X$ be a Feller process on a metric space $(E,d)$, $x \in E$, and consider the first exit times
   \begin{align*}
    \t_\e := \inf \big\{ t \geq 0: d(X_t, X_0) > \e \big\}, \quad  \e > 0.
   \end{align*}
 If $x$ is not a trap for $X$, then there exists $\d > 0$ such that
  \begin{align*}
   \forall \e \in (0,\d): \quad \EV_x(\t_\e) < +\infty.
  \end{align*}
\end{lemma}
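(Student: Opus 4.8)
The plan is to reduce the statement to a single radius, obtain a geometric tail bound for the exit time by the strong Markov property, and invoke the Feller property only to produce a uniform escape estimate over a small closed ball.

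First I would record two elementary reductions. Under $\PV_x$ we have $X_0 = x$ by normality, so $\t_\e = \inf\{t \ge 0 : d(X_t, x) > \e\}$ is exactly the first exit time of $X$ from the closed ball $\overline{\BB_\e(x)}$, and $\e \mapsto \t_\e$ is non-decreasing; hence it suffices to exhibit a single $\d > 0$ with $\EV_x(\t_\d) < +\infty$, since then $\EV_x(\t_\e) \le \EV_x(\t_\d) < +\infty$ for all $\e \in (0,\d)$. Writing $\sigma$ for the first exit time from the fixed ball $\overline{\BB_\d(x)}$ (so that $\sigma = \t_\d$ under $\PV_x$), the standard sub-multiplicative argument shows it is enough to find $s > 0$ and $\r \in (0,1)$ with
\[ \sup_{y \in \overline{\BB_\d(x)}} \PV_y(\sigma > s) \le \r . \]
Indeed, conditioning at time $ns$ and using that $X_{ns} \in \overline{\BB_\d(x)}$ on $\{\sigma > ns\}$, the Markov property gives $\PV_x(\sigma > (n+1)s) = \EV_x\big( \1_{\{\sigma > ns\}} \, \PV_{X_{ns}}(\sigma > s) \big) \le \r \, \PV_x(\sigma > ns)$, so $\PV_x(\sigma > ns) \le \r^n$ and $\EV_x(\t_\d) = \int_0^\infty \PV_x(\sigma > t)\, dt \le s/(1-\r) < +\infty$.

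To produce a candidate radius together with pointwise positivity, I would exploit the non-trap hypothesis. Setting $R := \inf\{t > 0 : X_t \neq x\}$, Blumenthal's $0$–$1$ law gives $\PV_x(R > 0) \in \{0,1\}$, and the Markov property forces $\PV_x(R > t)$ to be either $\equiv 1$ (the trap case) or exponential; since $x$ is not a trap we get $\PV_x(R < \infty) = 1$, whence $\PV_x(\sup_t d(X_t, x) > 0) = 1$. By continuity of measure there are $\d_0 > 0$ and $s_0 < \infty$ with $\PV_x\big(\sup_{t \le s_0} d(X_t, x) > \d_0\big) =: \b > 0$. I would also use that the set of traps equals $\{y : U_1 g(y) = g(y)\ \text{for all } g \in \cC_0(E)\}$, which is closed because each $U_1 g - g$ lies in $\cC_0(E)$ by the Feller property; thus $\d_0$ may be shrunk so that $\overline{\BB_{\d_0}(x)}$ is trap-free, and every starting point in it eventually moves.

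The main obstacle is the last step: upgrading this pointwise positivity at $x$ (and at every trap-free point) to the uniform bound displayed above. Exit times are only semicontinuous functionals of the path, so one cannot simply invoke continuity of $y \mapsto \PV_y(\sigma \le s)$. Instead I would use that the Feller property makes $y \mapsto \PV_y$ weakly continuous on path space, so that $y \mapsto \PV_y\big(\sup_{t \le s} d(X_t, x) > r\big)$ is lower semicontinuous (the functional $\sup_{t\le s} d(X_t,x)$ is lower semicontinuous, hence $\{\,\cdot\, > r\}$ is open and Portmanteau applies). A two-radius device — starting in $\overline{\BB_\d(x)}$ but requiring escape from the larger ball $\overline{\BB_{2\d}(x)}$ — absorbs the nuisance of starting points near the boundary, and compactness of the (locally compact) closed ball then turns strict positivity at every point into a uniform lower bound $\inf_y \PV_y(\sigma \le s) > 0$, i.e.\ $\sup_y \PV_y(\sigma > s) = \r < 1$. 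Feeding this into the iteration of the first step completes the argument. I expect the careful verification of this lower-semicontinuity-plus-compactness estimate, and in particular the treatment of boundary starting points, to be the most delicate part.
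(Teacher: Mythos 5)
Your overall strategy --- reduce to a single radius, get a geometric tail bound for the exit time via the Markov property, and use the Feller property only to produce a uniform escape estimate --- is sound and genuinely different from the paper's proof, but as written it has one genuine gap, exactly at the step you yourself flag as delicate. Your uniformization device (lower semicontinuity of $y \mapsto \PV_y\big(\sup_{t \le s} d(X_t,x) > r\big)$ plus compactness of the small closed ball) can only upgrade \emph{pointwise} strict positivity to a uniform lower bound, and you have actually established pointwise positivity only at the single point $x$. For $y \neq x$ you appeal to trap-freeness of the ball (``every starting point in it eventually moves''), but non-trapness of $y$ only guarantees that the process leaves the singleton $\{y\}$; it does not give $\PV_y(\sigma \le s) > 0$ for the exit time $\sigma$ from a \emph{fixed} ball $\overline{\BB_{2\d}(x)}$. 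A Feller process can perform recurrent motion confined to an arbitrarily small neighbourhood of $y$ forever (rotation-type motion along a small closed orbit through $y$, or a cyclic jump mechanism) without any point being a trap, so closedness of the trap set does not rescue the claim $\inf_y \PV_y(\sigma \le s) > 0$, and the compactness step then has nothing to uniformize. In short: the inequality $\sup_{y \in \overline{\BB_\d(x)}} \PV_y(\sigma > s) \le \r < 1$ is asserted, not proved.

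The gap is repairable inside your own framework, and the repair in fact shortens the argument: lower semicontinuity at the \emph{single} point $x$ already yields a uniform bound on a neighbourhood. From $\b := \PV_x\big(\sup_{t \le s_0} d(X_t,x) > \d_0\big) > 0$ and weak continuity of $y \mapsto \PV_y$ on path space you get $r > 0$ with $\PV_y\big(\sup_{t \le s_0} d(X_t,x) > \d_0\big) \ge \b/2$ for all $y \in \BB_r(x)$; taking $\d \le \min\{\d_0, r/2\}$, every $y \in \overline{\BB_\d(x)}$ satisfies $\PV_y(\sigma > s_0) \le 1 - \b/2$ for the exit time $\sigma$ from $\overline{\BB_\d(x)}$, and your iteration gives $\EV_x(\t_\d) \le 2 s_0/\b$, with monotonicity in $\e$ finishing the proof --- no compactness, no two-radius device, no trap-set discussion. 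Note, however, that this route still needs the nontrivial input that the Feller property implies $\PV_{y_n} \Rightarrow \PV_y$ weakly on Skorokhod space (standard, but not part of this paper's toolkit), plus the small care that the open event to use is $\big\{\sup_{t < s_0} d(X_t,x) > \d_0\big\}$. By contrast, the paper's proof is the classical analytic argument of Knight/Dynkin: non-trapness supplies $f \in \sD(A)$ with $Af(x) \neq 0$, rescaled so that $Af \ge 1$ on $\BB_\d(x)$; Dynkin's formula applied to $\t_\e \wedge t$ gives $\EV_x(\t_\e \wedge t) \le \EV_x\big( f(X_{\t_\e \wedge t}) - f(X_0) \big) \le 2 \norm{f}_\infty$, and $t \rightarrow \infty$ concludes. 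That proof buys brevity and stays within the resolvent/generator calculus used throughout the paper; yours, once repaired, buys independence from the generator characterization of non-traps at the cost of path-space weak-convergence machinery.
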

\begin{proof}
 As $x$ is not a trap, there exists $\tilde{f} \in \sD(A)$ with $A \tilde{f}(x) \neq 0$ (see~\cite[pp.~135ff]{Dynkin65}).
 The domain~$\sD(A)$ is a linear subset of~$\cC_0(E)$, thus we can rescale $\tilde{f}$ to $f \in \sD(A)$ such that 
   \begin{align*}
    \exists \d > 0: \forall y \in \BB_\d(x): \quad Af(y) \geq 1.
   \end{align*}
 Let $\e \in (0, \d)$. For any $t \geq 0$ consider the stopping time $\t_\e \wedge t$. Then $\EV_x(\t_\e \wedge t) < +\infty$,
 and another one of Dynkin's formulas \cite[Corollary to~Theorem~5.1]{Dynkin65} yields
   \begin{align*}
    \EV_x \big( f(X_{\t_\e \wedge t}) - f(X_0) \big)
     = \EV_x \Big( \int_0^{\t_\e \wedge t} A f(X_s) \, dt \Big)
     \geq \EV_x(\t_\e \wedge t),
   \end{align*}
 as $\PV_x$-a.s., $X_s \in \overbar{\BB_\e(x)} \subseteq \BB_\d(x)$ holds for all $s < \t_\e$.
 Then, by Lebesgue's dominated convergence theorem,
  \begin{align*}
    \EV_x(\t_\e) & \leq \limsup_{t \rightarrow \infty} \EV_x(\t_\e \wedge t) \leq 2 \norm{f}_\infty < \infty. \qedhere
  \end{align*}
\end{proof}

\section{Dirichlet Brownian Motions on Intervals} 

In contrast to star graphs, general metric graphs typically possess internal edges. 
Therefore, we need to extend our findings of \cite[Appendix~A.1]{WernerStar} on Brownian motions stopped or killed when leaving half-lines
to the case of finite intervals. As usual, a standard Brownian motion $B$ on $\R$ is understood to be a 
continuous, strong Markov process on $\R$ with transition semigroup
  \begin{align*}
   T^B_t f(x) = \int_{\R} f(y) \, \frac{1}{\sqrt{2\pi t}} \, e^{-\frac{(y-x)^2}{2t}} \, dt, \quad x \in \R, ~ t \geq 0, ~ f \in b\sB(\R).
  \end{align*}
Its resolvent 
  \begin{align*}
    U^B_\a f(x)
    & = \int_{\R} \frac{1}{\sqrt{2 \a}} \, e^{-\sqrt{2 \a} \, \abs{y-x}} \, f(y) \, dy, \quad \a > 0,
  \end{align*}
maps $b\sB(\R)$ to $b\cC(\R)$ and $\cC_0(\R)$ to $\cC^2_0(\R)$.

In the following, let $\h_x := \inf \{ t \geq 0: B_t = x \}$ be the first time $B$ hits $x$.
 
\begin{example} \label{ex:B_HL:dirichlet BB on half line}
 Let ${(\abs{B_t}, t \geq 0)}$ be the reflecting Brownian motion on $\R_+$ with its first hitting time $\h_0$ of the origin,
 and consider the \textdef{killed Brownian motion}~$B^{[0,\infty)}$ on~$\R_+$ resulting from killing $\abs{B}$ at $\h_0$:
  \begin{align*}
    B^{[0,\infty)}_t :=
      \begin{cases}
        \abs{B_t}, & t < \h_0, \\
        \D, & t \geq \h_0.
      \end{cases}
  \end{align*}
 The resolvent $U^{[0,\infty)}$ of the killed process $B^{[0,\infty)}$ can directly be 
 computed with the help of Dynkin's formula~\eqref{eq:Dynkins formula (resolvent)} for the resolvent:
 \begin{align} \label{eq:B_HL:dirichlet resolvent}
  U^{[0,\infty)}_\a f(x)
  & = U^B_\a f(x) -  e^{-\sqrt{2 \a} x} \, U^B_\a f(0),
 \end{align}
  where the function $f$ in $U^B_\a f$ is an arbitrary continuation of $f \in b\sB( [0,\infty) )$ to~$b\sB(\R)$. 
 Thus, the stability properties of the Brownian resolvent $U^B$ give
 \begin{align*}
  U^{[0,\infty)} b\sB(\R_+) \subseteq b\cC(\R_+) \quad \text{and} \quad U^{[0,\infty)} \cC_0(\R_+) \subseteq \cC^2_0(\R_+).
 \end{align*}
\end{example}

\begin{example} \label{ex:B_IN:dirichlet BB on interval}
 Like in Example~\ref{ex:B_HL:dirichlet BB on half line} for the half line, we consider the Brownian motion on $[a,b]$ 
 killed when it reaches the boundary, that is, the process $B^{[a,b]}$ defined by
   \begin{align*}
    B^{[a,b]}_t :=
      \begin{cases}
        B_t, & t < \h_a \wedge \h_b, \\
        \D,  & t \geq \h_a \wedge \h_b.
      \end{cases}
  \end{align*}
 We compute its resolvent by using the decomposition of the standard Brownian motion~$B$ at~$\h_a \wedge \h_b$
 with the help of Dynkin's formula~\eqref{eq:Dynkins formula (resolvent)}. For all $f \in b\sB([a,b])$, $x \in [a,b]$, this gives
 (cf.~\cite[Section~1.7]{ItoMcKean74} for the passage time formulas)
 \begin{align*}
  & U^{[a,b]}_\a f(x) \\
  & = \EV_x \Big( \int_0^{\h_a \wedge \h_b} e^{-\a t} \, f(B_t) \, dt \Big) \\
  & = U^B_\a f(x) - \EV_x \big( e^{-\a \h_a} \,;\, \h_a < \h_b \big) \, U^B_\a f(a) - \EV_x \big( e^{-\a \h_b} \,;\, \h_b < \h_a \big) \, U^B_\a f(b) \\
  & = U^B_\a f(x) - \frac{\sinh \big( \sqrt{2\a} \, (b - x) \big)}{\sinh \big( \sqrt{2\a} \, (b - a) \big)} \, U^B_\a f(a) 
                  - \frac{\sinh \big( \sqrt{2\a} \, (x - a) \big)}{\sinh \big( \sqrt{2\a} \, (b - a) \big)} \, U^B_\a f(b),
 \end{align*}
 with the boundary values 
 \begin{align*}
  U^{[a,b]}_\a f(a) = 0, \quad U^{[a,b]}_\a f(b) = 0.
 \end{align*}
 As $U^B$ maps $\cC_0(\R)$ to $\cC^2_0(\R)$, $U^{[a,b]}$ maps $\cC([a,b])$ to $\cC^2([a,b])$. 
Differentiation of the above formula then yields, for all $x \in [a,b]$,
 \begin{align*}
   U^{[a,b]}_\a f''(x) 
  & = 2 \big( \a \, U^{[a,b]}_\a f(x) - f(x) \big). 
 \end{align*}
\end{example}

\end{appendix}

\section*{Acknowledgements}
The main parts of this paper were developed during the author's Ph.D.\ thesis~\cite{Werner16} supervised by Prof.~J\"urgen~Potthoff, whose constant support the 
author gratefully acknowledges.

\bibliographystyle{amsplain}
\bibliography{dissMG}

\end{document}